\documentclass[12pt]{amsart}
\pdfoutput=1
\usepackage{amssymb}
\usepackage[noadjust]{cite}
\usepackage{array}
\usepackage{booktabs}
\usepackage{mdwtab}
\usepackage{mathtools}
\usepackage{lmodern}
\usepackage{microtype}
\usepackage[T1]{fontenc}
\usepackage[utf8]{inputenc}
\usepackage{hyphenat}
\usepackage{enumitem}
\usepackage{mathabx}
\usepackage{color}
\usepackage{caption}
\usepackage[labelfont=normalfont]{subcaption}
\usepackage{minibox} 
\usepackage[pdftex]{graphicx}
\usepackage[pdftex,margin=1in,rmargin=1in,marginparwidth=0.9in]{geometry}
\usepackage{xr-hyper}
\usepackage[bookmarks=true, bookmarksopen=true,%
    bookmarksdepth=3,bookmarksopenlevel=2,%
    colorlinks=true,%
    linkcolor=blue,%
    citecolor=blue,%
    filecolor=blue,%
    menucolor=blue,%
    urlcolor=blue]{hyperref}
\hypersetup{pdftitle={A positive characterization of rational maps}}
\hypersetup{pdfauthor={Dylan P. Thurston}}
\usepackage{url}
\usepackage{tikz}
\usetikzlibrary{arrows,calc,shapes.geometric,shapes.symbols,decorations.pathreplacing}
\tikzstyle{every picture}=[> = to]
\tikzset{cdlabel/.style={execute at begin node=$\scriptstyle,execute at end node=$}}
\tikzset{implication/.style={double equal sign distance, -implies}}
\tikzset{biimplication/.style={double equal sign distance, implies-implies}}


\makeatletter
\newcommand\mi@kern[1]{%
  \settowidth\@tempdima{$\mi@obj^{#1}$}
  \kern-\@tempdima
  #1
  \settowidth\@tempdima{$\mi@obj$}
  \kern\@tempdima
}

\newtoks\mi@toksp
\newtoks\mi@toksb
\DeclareRobustCommand{\manyindices}[5]{
  \def\mi@obj{#5}
  \mi@toksp\expandafter{\mi@kern{#2}}
  \mi@toksb\expandafter{\mi@kern{#1}}
  \@mathmeasure4\textstyle{#5_{#1}^{#2}}
  \@mathmeasure6\textstyle{#5_{#3}^{#4}}
  \dimen0-\wd6 \advance\dimen0\wd4
  \@mathmeasure8\textstyle{\hphantom{{}_{#1}^{#2}}#5^{\the\mi@toksp#4}_{\the\mi@toksb#3}}
  \hbox to \dimen0{}{\kern-\dimen0\box8}
}
\makeatother 


\newread\testin

\def\mathcenter#1{\vcenter{\hbox{$#1$}}}
\def\grapha#1{\includegraphics{#1}}
\def\graphb#1{\includegraphics[trim=-1 -1 -1 -1]{#1}}
\def\mfig#1{\mathcenter{\grapha{#1}}}
\def\mfigb#1{\mathcenter{\graphb{#1}}}

\renewcommand{\colon}{\nobreak\mskip2mu\mathpunct{}\nonscript
  \mkern-\thinmuskip{:}\allowbreak\mskip6muplus1mu\relax}

\ifpdf
  
\else
  
\fi


\newcommand{\RR}{\mathbb R}
\newcommand{\CC}{\mathbb C}
\newcommand{\DD}{\mathbb D}

\newcommand{\PP}{\mathbb P}

\newcommand{\co}{\colon}
\newcommand{\eps}{\varepsilon}
\renewcommand{\epsilon}{\varepsilon}
\newcommand{\abs}[1]{\lvert #1 \rvert}
\newcommand{\norm}[1]{\lVert #1 \rVert}

\renewcommand{\phi}{\varphi} 


\newcommand{\bdy}{\partial}



\DeclareMathOperator*{\esssup}{ess\,sup}



\theoremstyle{plain}
\numberwithin{equation}{section}
\newtheorem{proposition}[equation]{Proposition}
\newtheorem{lemma}[equation]{Lemma}
\newtheorem{corollary}[equation]{Corollary}
\newtheorem{conjecture}[equation]{Conjecture}

\newtheorem{theorem}{Theorem}
\newtheorem{citethm}[equation]{Theorem}

\theoremstyle{definition}
\newtheorem{definition}[equation]{Definition}

\newtheorem{question}[equation]{Question}

\theoremstyle{remark}
\newtheorem{example}[equation]{Example}
\newtheorem{remark}[equation]{Remark}

\theoremstyle{plain}

\newenvironment{taggedthm}[1]
 {\taggedthmx}
 {\endtaggedthmx}

\hyphenation{Thurs-ton}



\DeclareMathOperator{\EL}{EL} 
\DeclareMathOperator{\SF}{SF} 
\newcommand{\SFgen}{\SF_{\mathrm{gen}}} 
\newcommand{\SFsimp}{\SF_{\mathrm{simp}}} 

\DeclareMathOperator{\Edge}{Edge}
\newcommand{\Edges}{\Edge}
\DeclareMathOperator{\Lip}{Lip} 
\DeclareMathOperator{\Emb}{Emb} 
\DeclareMathOperator{\Teich}{Teich}
\DeclareMathOperator{\Area}{Area}
\renewcommand{\Join}{\mathop{\mathrm{Join}}\nolimits} 
\newcommand{\ASF}{\overline{\SF}} 
\newcommand{\ALip}{\overline{\Lip}}
\newcommand{\CCa}{\widehat{\CC}}

\newcommand{\id}{\mathrm{id}}

\newcommand{\shortseq}[5]{#1 \overset{#2}{\longrightarrow} #3 \overset{#4}{\longrightarrow} #5}

\newcommand{\wt}[1]{\widetilde{#1}}
\newcommand{\wh}[1]{\widehat{#1}}

\newcommand{\oE}{\overline{E}{}}

\newcommand{\pdual}{p^\vee}

\definecolor{dark-green}{rgb}{0,0.6,0}
\definecolor{dark-red}{rgb}{0.7,0,0}
\definecolor{dark-blue}{rgb}{0,0,0.8}

\newcommand{\dinkusimg}[2]{%
  \raisebox{-0.5\height}{\includegraphics[height=#2]{#1}}%
}
\newcommand{\dinkus}{%
  \centerline{%
    \dinkusimg{dinkus-2}{8pt}%
    \hspace{1em}%
    \dinkusimg{dinkus-3}{10pt}%
    \hspace{1em}%
    \dinkusimg{dinkus-4}{12pt}%
    \hspace{1em}%
    \dinkusimg{dinkus-5}{14pt}%
    \hspace{1em}%
    \dinkusimg{dinkus-4}{12pt}%
    \hspace{1em}%
    \dinkusimg{dinkus-3}{10pt}%
    \hspace{1em}%
    \dinkusimg{dinkus-2}{8pt}%
  }
  \smallskip%
}


\externaldocument[Elast:]{Elastic}
\externaldocument[Emb:]{Embedding-v3}
\externaldocument[summary:]{summary}

\graphicspath{{draws/}{figs/}}


\makeatletter
\@namedef{subjclassname@2020}{%
  \textup{2020} Mathematics Subject Classification}
\makeatother

\begin{document}
\title{A positive characterization of rational maps}

\author[Dylan Thurston]{Dylan~P.~Thurston}
\address{Department of Mathematics,
         Indiana University,
         Bloomington, Indiana 47405,
         USA}
\email{dpthurst@indiana.edu}
\date{May 4, 2020}

\begin{abstract}
  When is a topological branched self-cover of
  the sphere equivalent
  to a post-critically finite rational map on $\CC\PP^1$? William
  Thurston gave one answer in
  1982, giving a negative criterion (an obstruction to a map being
  rational). We give a complementary, positive criterion:
  the branched self-cover is equivalent to a rational map if and only
  if there is an elastic graph spine for the complement of the
  post-critical set that gets ``looser'' under
  backwards iteration.
\end{abstract}

\dedicatory{This paper is dedicated to William Thurston, 1946--2012, and Tan Lei,
  1963--2016.\\
  I will miss the joy they brought to the subject.}

\subjclass[2020]{Primary 37F10; Secondary 37E25, 37F31}
\keywords{Complex dynamics, rational maps, elastic graphs,
  quasiconformal surgery, extremal length, Thurston obstruction}

\maketitle

\setcounter{tocdepth}{1}
\tableofcontents

\section{Introduction}
\label{sec:intro}

In this paper, we complete the program laid out in earlier work
\cite{Thurston16:RubberBands}, and give a positive characterization of
post-critically finite rational maps among branched self-covers of
the sphere.

\begin{definition}
  A \emph{topological branched self-cover} of the sphere is a finite
  set of points~$P \subset S^2$ and a map $f \co (S^2,P) \to (S^2,P)$,
  also written $f \co (S^2,P)\righttoleftarrow$, so that $f$ is an orientation-preserving
  covering map (with degree greater than~$1$) when restricted to a map
  from $S^2 \setminus f^{-1}(P)$ to $S^2 \setminus P$. That is, $f$~is
  a branched cover so that $f(P) \subset P$ and $P$ contains the
  critical
  values. (As a result, $P$ contains the post-critical set of~$f$.)
  Two branched
  self-covers are \emph{equivalent} if they are related by conjugacy
  of~$S^2$ (possibly changing the set~$P$) and homotopy
  relative to~$P$.
\end{definition}

One source of topological branched self-covers is post-critically
finite rational maps. Let $\CCa = \CC\PP^1$, and
suppose $f(z) = P(z)/Q(z)$ is a rational map with a finite,
forward-invariant set~$P$ that contains all critical values. Then,
if we forget the conformal structure,
$f \co (\CCa, P)\righttoleftarrow$ is a topological branched
self-cover.

\begin{question}\label{quest:pcf}
  When is a topological branched self-cover equivalent to a
  post-critically finite rational map?
\end{question}

One answer to Question~\ref{quest:pcf} was given by W.\ Thurston 30 years ago
\cite{DH93:ThurstonChar}, recalled as
Theorem~\ref{thm:thurston-obstruction}.
He proved a negative characterization: there
is a certain combinatorial object (an \emph{annular obstruction}) that
exists exactly when
$f \co (S^2,P) \righttoleftarrow$ is \emph{not} equivalent to a
rational map.
In this paper, we give a complementary,
positive, characterization: a combinatorial
object that exists exactly when $f$ \emph{is} equivalent to a
rational map.

Before stating the main theorem, we give a combinatorial description of
topological branched self-covers $f \co (S^2,P)\righttoleftarrow$ in
terms of graph maps.%
\footnote{In this paper, a graph map is a continuous map, not necessarily
  taking vertices to vertices.}
Pick a graph spine~$\Gamma_0$ for $S^2 \setminus P$ (a deformation
retract of $S^2\setminus P$, i.e., an embedded graph so each
complementary region is a punctured disk) and consider its inverse
image
$\Gamma_1 = f^{-1}(\Gamma_0) \subset S^2 \setminus f^{-1}(P)$.
There
are two natural homotopy classes of maps from $\Gamma_1$ to~$\Gamma_0$.
\begin{itemize}
\item A covering map~$\pi_\Gamma$ commuting with the action of~$f$.
  \[
  \begin{tikzpicture}
    \matrix[row sep=0.8cm,column sep=1cm] {
      \node (Gammai) {$\Gamma_1$}; &
        \node (Gamma) {$\Gamma_0$}; \\
      \node (S2i) {$S^2\setminus f^{-1}(P)$}; &
        \node (S2) {$S^2\setminus P$.}; \\
    };
    \draw[right hook->] (Gamma) to (S2);
    \draw[->] (S2i) to node[auto=left,cdlabel] {f} (S2);
    \draw[->] (Gammai) to node[auto=left,cdlabel] {\pi_\Gamma} (Gamma);
    \draw[right hook->] (Gammai) to (S2i);
  \end{tikzpicture}
  \]
\item A map~$\phi_\Gamma$ commuting up to homotopy with
  the inclusion of $S^2 \setminus f^{-1}(P)$ in $S^2 \setminus P$.
  The homotopy class~$[\phi_\Gamma]$ is unique, since
  $\Gamma_0$ is a deformation retract of $S^2\setminus P$.
  \[
  \begin{tikzpicture}
    \matrix[row sep=0.8cm,column sep=1cm] {
      \node (Gammai) {$\Gamma_1$}; &
        \node (Gamma) {$\Gamma_0$}; \\
      \node (S2i) {$S^2\setminus f^{-1}(P)$}; &
        \node (S2) {$S^2\setminus P$.}; \\
    };
    \draw[right hook->] (Gamma) to (S2);
    \draw[right hook->] (S2i) to node[auto=left,cdlabel] {\text{incl.}} (S2);
    \draw[->] (Gammai) to node[auto=left,cdlabel] {\phi_\Gamma} (Gamma);
    \draw[right hook->] (Gammai) to (S2i);
    \node at ($(Gammai)!0.5!(S2)$) {$\sim$};
  \end{tikzpicture}
  \]
\end{itemize}
This data $\pi_\Gamma, \phi_\Gamma \co \Gamma_1 \rightrightarrows \Gamma_0$ is a
\emph{virtual endomorphism} of~$\Gamma_0$. It,
together with a ribbon graph structure on~$\Gamma_0$, determines $f$ up
to equivalence. (See Section~\ref{sec:spines} for examples. This is
proved in Theorem~\ref{thm:spine-surf} in
Section~\ref{sec:spines}. Example~\ref{examp:rabbits} shows that the
extra ribbon structure is necessary to determine~$f$.)

For our characterization of rational maps, we also need an
\emph{elastic structure} on~$\Gamma = \Gamma_0$, by which we mean
an elastic length~$\alpha(e) \in \RR_{>0}$ on each edge~$e$ of~$\Gamma$. (We
treat $\alpha$ as an ordinary length for purposes of differentiation,
etc.) An
\emph{elastic graph}~$G = (\Gamma, \alpha)$ is a graph~$\Gamma$ and
an elastic structure~$\alpha$ on~$\Gamma$.
For a piecewise-linear (PL) map $\psi\co G_1 \to G_2$ between elastic
graphs, the \emph{embedding energy} is
\begin{equation}
    \label{eq:embedding-1}
    \Emb(\psi) \coloneqq
      \esssup_{y \in G_2} \sum_{x \in \psi^{-1}(y)} \abs{\psi'(x)}.
  \end{equation}
(We identify each edge~$e$ with an interval of length
$\alpha(e)$ to compute derivatives.)
The essential supremum ignores sets
of measure zero, which for PL maps amounts to ignoring
vertices of~$G_2$ and images of vertices
of~$G_1$.
On a homotopy class,
$\Emb[\psi]$ is defined to be the infimum of
$\Emb(\phi)$ for $\phi \in [\psi]$.
$\Emb[\psi]$ is realized and
controls whether $G_1$ is ``looser'' than~$G_2$ as an elastic graph
\cite[Theorem~\ref*{Elast:thm:emb-sf}]{Thurston19:Elastic}.

In the context of a branched self-cover $f \co
(S^2,P)\righttoleftarrow$, if $G = G_0$ is an spine for
$S^2\setminus P$ with an elastic structure, we get a virtual endomorphism
$\pi_G,\phi_G \co G_1 \rightrightarrows G_0$, where $G_1$ inherits an
elastic structure by
pulling back via~$\pi_G$. We can
then consider $\Emb[\phi_G]$, the embedding energy
of~$\phi_G \co G_1 \to G_0$, or the
iterated version $\Emb[\phi_G^n]$. (See Section~\ref{sec:iter} for iteration.)

In a mild generalization, we also consider disconnected surfaces.

\begin{definition}
A \emph{branched self-cover}
$f \co (\Sigma,P) \righttoleftarrow$ is a (possibly disconnected) compact
closed surface~$\Sigma$, a finite subset~$P \subset \Sigma$,
and a map $f \co \Sigma\to\Sigma$ that
\begin{itemize}
\item is a branched covering map with branch values contained in~$P$,
\item has constant degree greater than~$1$,
\item maps $P$ to~$P$, and
\item is a bijection on components of~$\Sigma$.
\end{itemize}
\end{definition}
(The restriction to $\pi_0$-bijective maps avoids dynamically
uninteresting cases.)
A standard Euler characteristic argument shows that each
component of~$\Sigma$ is either a sphere or a torus, and that in the torus
case there is no branching.

\begin{definition}\label{def:riemann-surf}
  For the purposes of this paper, a \emph{Riemann surface}
  $S = (\Sigma, \omega)$ is a
  topological surface~$\Sigma$, possibly disconnected or with
  boundary, and a conformal structure~$\omega$
  on~$\Sigma$. A \emph{rational map} is a closed Riemann surface~$S$
  and a conformal, $\pi_0$-bijective map $f \co S \righttoleftarrow$.
\end{definition}

\begin{definition}
  A
  branched self-cover $f \co (\Sigma,P) \righttoleftarrow$
  is of \emph{non-compact type} if each component of~$\Sigma$ contains a
  point of~$P$ that eventually falls into a cycle with a branch point
  (under forward iteration of~$f$). It is of \emph{hyperbolic type} if
  each component of~$\Sigma$ contains a point of~$P$ and each cycle of~$P$
  contains a branch point.
\end{definition}

In either case, the branching is non-trivial, so
$\Sigma$ is a union of spheres.  If $f$ is a rational map, it is of
non-compact type iff
the Julia set does not contain any component of~$\Sigma$ and it is
of hyperbolic type iff
the dynamics on the Julia set is hyperbolic.
Thus the term ``non-compact type'' refers to the orbifold of~$f$,
while the term ``hyperbolic type'' refers to dynamics of~$f$ and not to
the orbifold.

\begin{theorem}\label{thm:detect-rational}
  Let $f \co (\Sigma,P) \righttoleftarrow$ be a branched
  self-cover of hyperbolic type. Then the following conditions are
  equivalent.
  \begin{enumerate}
  \item The branched self-cover $f$ is equivalent to a rational map.
  \item There is an elastic graph spine~$G$ for
    $\Sigma\setminus P$ and an integer $n > 0$ so that $\Emb[\phi_G^n] <
    1$.
  \item For every elastic graph spine~$G$ for
    $\Sigma\setminus P$ and for every sufficiently large~$n$
    (depending on $f$ and~$G$), we have $\Emb[\phi_G^n] < 1$.
  \end{enumerate}
\end{theorem}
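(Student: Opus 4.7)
The natural strategy is to prove the chain $(3) \Rightarrow (2) \Rightarrow (1) \Rightarrow (3)$. The implication $(3) \Rightarrow (2)$ is trivial, since every $\Sigma\setminus P$ admits at least one elastic spine. The other two implications each couple the elastic-graph theory of \cite{Thurston16:Elastic} with Thurston's classical obstruction theorem (Theorem~\ref{thm:thurston-obstruction}).

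For $(1) \Rightarrow (3)$: assume $f$ is realized conformally on $\Sigma$. Because $f$ is of hyperbolic type, the orbifold hyperbolic metric on $\Sigma\setminus P$ is uniformly expanded by $f$, so there are constants $C > 0$ and $\mu < 1$ with the property that inverse iterates $f^{-n}$ contract the metric by a factor at most $C\mu^n$ on the spine. Given an arbitrary elastic spine $G$, I would pass to its geodesic realization in the hyperbolic metric and invoke the identification of $\Emb[\phi]$ with the best conformal stretch of $\phi$ proved in \cite{Thurston16:Elastic}. Under backward iteration this stretch is at most $C\mu^n$, so $\Emb[\phi_{G,n}] < 1$ once $n$ is large.

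For $(2) \Rightarrow (1)$: suppose $\Emb[\phi_{G,n}] < 1$ for some elastic spine $G$ and some $n > 0$, but $f$ is not equivalent to a rational map. By Theorem~\ref{thm:thurston-obstruction}, $f$ then carries an annular Thurston obstruction: an $f$-invariant multicurve $\Gamma$ whose Thurston matrix has spectral radius $\lambda \ge 1$. Let $w$ be a Perron eigenvector and view it as a weighted multicurve. The plan is to use $w$ as a test object that bounds $\Emb[\phi_{G,n}]$ from below by $\lambda^n$, producing a contradiction. The lower bound comes from the extremal-length interpretation of embedding energy from \cite{Thurston16:Elastic}, which expresses $\Emb[\phi]$ as a supremum of extremal-length ratios over weighted curve families; each pullback by $f$ inflates the weight of $w$ by $\lambda$, so after $n$ iterations one obtains $\Emb[\phi_{G,n}] \ge \lambda^n \ge 1$.

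The main obstacle will be the last step of $(2) \Rightarrow (1)$: turning the combinatorial/conformal obstruction $\Gamma$ into a weighted curve system that actually realizes, or approximately realizes, the supremum defining $\Emb[\phi_{G,n}]$. The obstruction $\Gamma$ lives in $\Sigma \setminus P$ and is paired with an annular cover, while $\Emb[\phi_{G,n}]$ is defined purely in terms of the elastic metrics on $G_n$ and $G_0$; the duality theorems of \cite{Thurston16:Elastic} are exactly the tool to move between these two descriptions. Once that dictionary is in place both implications become short, and most of the work of the present paper should be setting up the dictionary and running the iteration argument carefully.
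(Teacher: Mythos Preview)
Your $(2)\Rightarrow(1)$ is essentially correct: an annular obstruction with leading eigenvalue $\lambda\ge 1$ does force $\Emb[\phi_{G,n}]\ge 1$ for every~$n$, via the identification $\Emb=\SF$ and the extremal-length test curve coming from the Perron eigenvector. The paper carries this out in Section~\ref{sec:obstructions} (Proposition~\ref{prop:obstruction-energy} and Corollary~\ref{cor:obstruction-iterate}), but note that this is \emph{not} the paper's main proof of that implication---the paper instead routes through conformal surface embeddings (Theorems~\ref{thm:rational-surfaces-embed} and~\ref{thm:emb-surf}) and only invokes a weak consequence of Theorem~\ref{thm:thurston-obstruction} (that $f^{\circ n}$ rational implies $f$ rational) at the very end. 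Your route assumes the full obstruction theorem from the outset.

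The genuine gap is in $(1)\Rightarrow(3)$. Uniform hyperbolic expansion of~$f$ controls the \emph{Lipschitz} constant of $\phi_{G,n}$, but the inequality goes the wrong way: one always has $\Lip[\phi]\le\Emb[\phi]$, since $\Emb$ sums $\abs{\phi'}$ over \emph{all} preimages of a point, and the multiplicity of $\phi_{G,n}$ typically grows with~$n$. In the language of Section~\ref{sec:other-energies}, your argument bounds $\oE^\infty_\infty$, and $\oE^\infty_\infty<1$ is exactly Nekrashevych's ``expanding'' condition, which holds for plenty of obstructed (non-rational) maps. The identification $\Emb=\SF$ from \cite{Thurston16:Elastic} equates $\Emb$ with a supremum of \emph{extremal-length} ratios on the graph, and these are not governed by a pointwise derivative bound in the hyperbolic metric. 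The paper's actual proof of $(1)\Rightarrow(3)$ is where the new work lies: rationality gives an annular conformal embedding $S_1\hookrightarrow S_0$ (Theorem~\ref{thm:rational-surfaces-embed}), hence $\wt\SF[\phi_S]<1$ (Theorem~\ref{thm:sf-cover}), hence $\ASF<1$ on the surface side; the thickening estimate (Theorem~\ref{thm:embedding-thickening}, Proposition~\ref{prop:asf-graph-surface}) then transfers this to graphs. There is no shortcut through the hyperbolic metric that bypasses this surface-to-graph comparison.
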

Loosely speaking, Theorem~\ref{thm:detect-rational} says that $f$ is
equivalent to a rational map iff elastic graph spines get looser under
repeated backwards iteration. As compared to the earlier
Theorem~\ref{thm:thurston-obstruction},
Theorem~\ref{thm:detect-rational} makes it easier to prove a map is
rational: you can just exhibit an elastic graph spine~$G$ and a suitable map
in the homotopy class of~$\phi_G^n$. (In practice, $n=1$ often
suffices; see Section~\ref{sec:future}.)
We prove Theorem~\ref{thm:detect-rational} as
Theorem~\ref{thm:detect-rational-a} in Section~\ref{sec:iter},
including some additional equivalent conditions.

Theorem~\ref{thm:detect-rational} and the older
Theorem~\ref{thm:thurston-obstruction} are complementary; neither one
implies the other, and the proofs are largely independent. It is easy to see
one implication: if there is an elastic graph spine that gets looser
under backwards iteration in the sense of
Theorem~\ref{thm:detect-rational}, then there is no annular
obstruction in the sense of Theorem~\ref{thm:thurston-obstruction}.
See \cite[Section~8.4]{Thurston16:RubberBands} for the argument, or
Section~\ref{sec:obstructions} of this paper for generalizations.

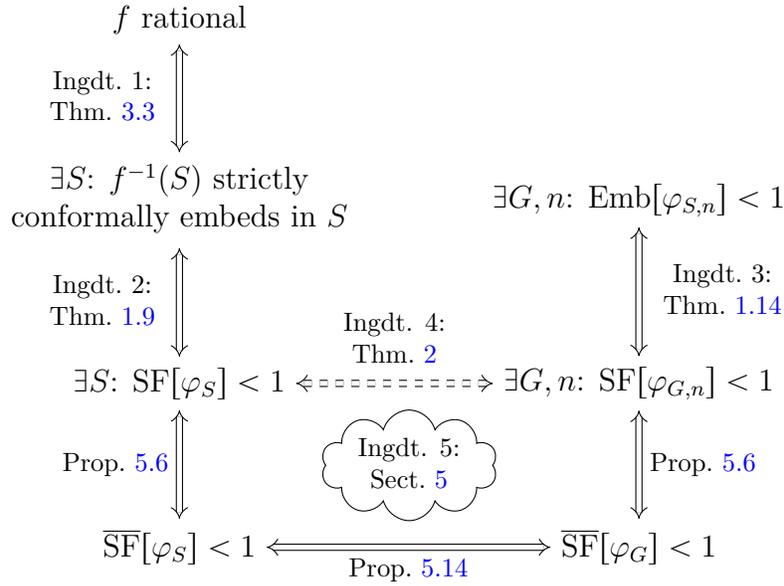
\begin{figure}
\begin{equation*}
\begin{tikzpicture}
  \matrix[row sep=8ex, column sep=4em] {
    \node (rational) {$f$ rational}; \\
    \node (emb-surf) {\minibox[c]{%
        $\exists S$: $f^{-1}(S)$ strictly\\
        conformally embeds in $S$}}; &
      \node (emb-graph) {$\exists G, n$: $\Emb[\phi_S^n] < 1$}; \\
    \node (SF-surf) {$\exists S$: $\SF[\phi_S] < 1$}; &
      \node (SF-graph) {$\exists G, n$: $\SF[\phi_G^n] < 1$}; \\
    \node (ASF-surf) {$\ASF[\pi_S,\phi_S] < 1$}; &
      \node (ASF-graph) {$\ASF[\pi_G,\phi_G] < 1$}; \\
  };
  \node[shape=cloud,cloud ignores aspect,draw,inner sep=-2pt] at ($(ASF-surf)!0.5!(SF-graph)$) {
    \footnotesize{\begin{tabular}{c}
                    Ingdt. 5:\\
                    Sect.\ \ref{sec:iter}
                  \end{tabular}}};
  \draw[biimplication] (rational) to
    node[left]{\footnotesize{\begin{tabular}{c}
      Ingdt.\ 1:\\
      Thm.\ \ref{thm:rational-surfaces-embed}
    \end{tabular}}} (emb-surf);
  \draw[biimplication] (emb-surf) to
    node[left,align=center]{\footnotesize{\begin{tabular}{c}
      Ingdt.\ 2:\\
      Thm.\ \ref{thm:emb-surf}
    \end{tabular}}} (SF-surf);
  \draw[biimplication] (SF-surf) to
    node[left]{\footnotesize{Prop.\ \ref{prop:asympt-e}}}
    (ASF-surf);
  \draw[biimplication] (ASF-surf) to
    node[below,align=center]{\footnotesize{Prop.\ \ref{prop:asf-graph-surface}}} (ASF-graph);
  \draw[biimplication] (SF-graph) to
    node[right]{\footnotesize{Prop.\ \ref{prop:asympt-e}}}
    (ASF-graph);
  \draw[biimplication] (emb-graph) to
    node[right,align=center]{\footnotesize{\begin{tabular}{c}
      Ingdt.\ 3:\\
      Thm.\ \ref{thm:emb-sf}
    \end{tabular}}}
    (SF-graph);
  \draw[biimplication,dashed] (SF-graph) to 
  node[above]{\footnotesize{\begin{tabular}{c}
                              Ingdt. 4:\\
                              Thm.\ \ref{thm:embedding-thickening}
    \end{tabular}}} (SF-surf);
\end{tikzpicture}
\end{equation*}
  \caption{An outline of the equivalences used in proving
    Theorem~\ref{thm:detect-rational}, for a fixed branched self-cover
  $f \co (\Sigma, P) \righttoleftarrow$}\label{fig:outline}
\end{figure}

There are several ingredients to prove
Theorem~\ref{thm:detect-rational}, as outlined in
Figure~\ref{fig:outline} and summarized below. Much of this has
appeared in other papers; the main new contributions of this paper are
in Ingredients~4 and~5, in Sections~\ref{sec:thicken}
and~\ref{sec:iter}, respectively. In the proof as a
whole, the hardest parts are Ingredient~1, which has been known for
some time; Ingredient~2 relating conformal embeddings to extremal
length, particularly the behavior under covers; and
Ingredient~3, which involves a combinatorial understanding
of the embedding energy in Equation~\eqref{eq:embedding-1}.

\dinkus
The
zeroth ingredient is the graphical
description of topological branched self-covers in terms of spines,
crucial to our entire approach. This is essentially a graphical
version of
Nekrashevych's automata for iterated monodromy groups
\cite{Nekrashevych05:SelfSimilar}. It is
described in Section~\ref{sec:spines}, culminating in
Theorem~\ref{thm:spine-surf}, giving a graphical model for branched self-covers.

\dinkus

The first ingredient is a characterization of rational maps in terms
of conformal embeddings of Riemann surfaces, a surface version of the
graph criterion in Theorem~\ref{thm:detect-rational}. This has been
folklore in the
community for some time and is recalled as
Theorem~\ref{thm:rational-surfaces-embed} in
Section~\ref{sec:quasi-conf-surg}.

\dinkus

The second ingredient is a characterization of conformal embeddings of
Riemann
surfaces in terms of extremal length of multi-curves on the
surface. This appeared in earlier work with Kahn and Pilgrim
\cite{KPT15:EmbeddingEL}, as we
now summarize. Recall that the \emph{extremal length} of a
simple multi-curve~$c$ on a Riemann surface measures the maximum thickness of
a collection of annuli around~$c$.

\begin{definition}\label{def:sf-surf}
  For
  $f \co R \hookrightarrow S$ a topological embedding of Riemann
  surfaces, the (extremal length) \emph{stretch factor} of~$f$ is the
  maximal ratio of extremal lengths between the two surfaces:
  \begin{equation*}
    \SF[f] \coloneqq \sup_{c\co C \to R}
      \frac{\EL_S[f\circ c]}{\EL_R[c]},
  \end{equation*}
  where the supremum runs over all simple multi-curves~$c$ on~$R$
  with $\EL_R[c] \ne 0$.
\end{definition}

\begin{definition}
  An \emph{annular extension} of a
  Riemann surface~$R$ is any surface obtained by attaching a conformal
  annulus to each boundary component of~$R$, and
  a conformal embedding $f \co R \hookrightarrow S$
  between Riemann surfaces is \emph{annular} if it extends
  to a conformal embedding of an annular extension of~$R$ into~$S$.
\end{definition}

\begin{citethm}[Kahn-Pilgrim-Thurston \cite{KPT15:EmbeddingEL}]
  \label{thm:emb-surf}
  Let $R$ and $S$ be Riemann surfaces and let $f\co
  R \hookrightarrow S$ be a
  topological embedding so that no component of $f(R)$ is contained in
  a disk or a once-punctured disk. Then $f$
  is homotopic to a conformal embedding if and only if $\SF[f] \le 1$.
  Furthermore, $f$ is homotopic to an annular conformal embedding if
  and only if $\SF[f] < 1$.
\end{citethm}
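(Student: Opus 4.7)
The plan is to treat the two directions separately, handling the strict and non-strict cases in parallel.

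For the necessary direction (conformal embedding $\Rightarrow \SF[f] \le 1$, annular conformal embedding $\Rightarrow \SF[f] < 1$), let $g \co R \hookrightarrow S$ be a conformal embedding and $c \co C \to R$ a simple multi-curve with $\EL_R[c] > 0$. Given any admissible conformal metric $\rho$ on $R$, extend $\rho$ by zero to all of $S$; the extended metric has the same area as $\rho$ on $R$, and every closed curve on $S$ homotopic to $g \circ c$ restricts to a length bounded below by $\inf_{\gamma \sim c} L_\rho(\gamma)$. Hence $\EL_S[g\circ c] \le \EL_R[c]$ and $\SF[g] \le 1$. If $g$ extends to a conformal embedding of an annular extension $R'$ of $R$, the same argument gives $\EL_S[g\circ c] \le \EL_{R'}[c]$. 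A Grötzsch-type additivity for moduli, together with compactness of the projectivized space of weighted multi-curves, produces a uniform constant $\lambda < 1$ with $\EL_{R'}[c] \le \lambda \EL_R[c]$, yielding $\SF[g] < 1$.

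For the sufficient direction, assume $\SF[f] \le 1$ and construct a conformal embedding homotopic to $f$. The main tool is the Jenkins-Strebel theorem: for each weighted simple multi-curve $c$ on $R$, there is a unique holomorphic quadratic differential $q_{R,c}$ on $R$ whose horizontal foliation decomposes $R$ into annuli homotopic to the components of $c$ with heights proportional to the weights, and the total area of $q_{R,c}$ computes $\EL_R[c]$ (up to the square of the weight mass). Similarly, $q_{S, f\circ c}$ exists on $S$, and the hypothesis $\SF[f] \le 1$ gives $\|q_{S, f\circ c}\| \le \|q_{R,c}\|$. Matching heights annulus-by-annulus produces a conformal embedding of the non-critical part of $R$ (with respect to $q_{R,c}$) into $S$ homotopic to $f$ on that part. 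Exhausting $R$ by a sequence of multi-curves whose foliations accumulate to a filling lamination, and applying a normal-families compactness argument, one assembles these partial conformal maps into a global conformal embedding in the homotopy class of $f$. In the strict case $\SF[f] < 1$, the uniform slack permits fixed conformal annular collars to be attached at every boundary component before embedding.

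The main obstacle will be the upgrade from a per-multi-curve extremal-length inequality to a single global conformal map. Each Jenkins-Strebel differential sees only the foliated part of its own decomposition, so patching requires uniform control as the multi-curve exhausts the surface, together with control of the critical graph and its complement. A secondary subtlety is the borderline equality $\SF[f] = 1$: here one must prevent the limiting map from collapsing a boundary annulus or folding a boundary component, which is exactly the role of the hypothesis that no component of $f(R)$ lies in a disk or once-punctured disk---without this exclusion, trivial counterexamples arise from peripheral components whose every multi-curve has zero extremal length.
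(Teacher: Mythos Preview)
This theorem is stated without proof in the present paper; it is the main result of the cited companion work \cite{KPT15:EmbeddingEL}, so there is no in-paper argument to compare against. I therefore evaluate your sketch on its own terms.

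The necessary direction is standard and your argument is correct. The sufficient direction has a genuine gap precisely where you anticipate it, and in fact earlier. Your per-multi-curve step already fails: the hypothesis $\SF[f]\le 1$ yields only the scalar inequality $\EL_S[f\circ c]\le\EL_R[c]$ for each simple multi-curve~$c$, not a cylinder-by-cylinder comparison. The Jenkins--Strebel decompositions of $R$ and $S$ associated to $c$ and $f\circ c$ consist of cylinders $A_i^R$ and $A_i^S$, but nothing forces $\EL(A_i^S)\le\EL(A_i^R)$ for each~$i$; only a sum is controlled, and applying the hypothesis to a single component $c_i$ compares $\EL_S[f\circ c_i]$ with $\EL_R[c_i]$, not with $\EL(A_i^R)$. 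So ``matching heights annulus-by-annulus'' does not produce a conformal embedding even of the cylinder region. Beyond that, your limiting step invokes normal families for maps whose domains change with~$c$ and do not monotonically exhaust~$R$; no standard compactness argument applies in that setting.

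The argument in \cite{KPT15:EmbeddingEL} runs in the contrapositive. If no conformal embedding is homotopic to~$f$, one analyzes the extremal quasiconformal embedding in the homotopy class, which (by Ioffe-type results for bordered Riemann surfaces) is a Teichm\"uller map with initial and terminal quadratic differentials; the horizontal measured foliation of the initial differential then furnishes a weighted simple multi-curve, or a limit thereof, whose extremal-length ratio exceeds~$1$. Your proposal contains no substitute for this variational step, and the obstacle you flag is not a technicality to be patched later but essentially the entire content of the theorem.
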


We also use Theorem~\ref{thm:sf-cover}, a strengthening of
Theorem~\ref{thm:emb-surf} that behaves well under covers.

\dinkus

The third ingredient is a relation between the embedding energy of
Equation~\eqref{eq:embedding-1} to a stretch factor of maps between
graphs (rather than surfaces)
\cite{Thurston19:Elastic}.

\begin{definition}\label{def:sf-graph}
  Let $G = (\Gamma, \alpha)$ be an elastic graph.  A \emph{multi-curve}
  on~$G$ is a (not necessarily connected) 1-manifold~$C$ and a PL map
  $c \co C \to G$. It is (strictly)
  \emph{reduced} if $c$ is locally injective. The \emph{extremal length}
  of $(C,c)$ is
  \begin{equation}\label{eq:EL-int}
  \EL(c) \coloneqq \int_{y \in G} n_c(y)^2\,d\alpha(y)
  \end{equation}
  where $n_c(y)$ is the number of elements in $c^{-1}(y)$.
  (See \cite[Section~5.2]{Thurston16:RubberBands} for motivation on why
  this is called extremal length.)
  If $c$ is reduced, then
  $n_c(y)$ depends only on the edge containing~$y$, and
  Equation~\eqref{eq:EL-int} reduces
  to
  \begin{equation}\label{eq:EL-sum}
  \EL(c) = \sum_{e \in \Edges(G)} \alpha(e) n_c(e)^2.
  \end{equation}
  $\EL[c]$ is the
  extremal length of any reduced representative of~$[c]$.

  For $[\phi] \co G \to H$ a homotopy class of maps
  between elastic graphs, the (extremal length) \emph{stretch factor}
  is the maximum ratio of extremal lengths:
  \begin{equation}\label{eq:sf-el}
    \SF[\phi] \coloneqq \sup_{c\co C \to G} \frac{\EL_H[\phi \circ c]}{\EL_G[c]}
  \end{equation}
  where the supremum runs over all non-trivial multi-curves on~$G$.
\end{definition}

\begin{citethm}[{\cite[Theorem~\ref*{Elast:thm:emb-sf}]{Thurston19:Elastic}}]
  \label{thm:emb-sf}
  For $[\phi] \co G \to H$ a homotopy class of maps
  between elastic graphs,
  \begin{equation*}
  \Emb[\phi] = \SF[\phi].
  \end{equation*}
\end{citethm}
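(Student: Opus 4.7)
The plan is to prove $\SF[\phi] \le \Emb[\phi]$ and $\Emb[\phi] \le \SF[\phi]$ separately. The first inequality is a direct Cauchy--Schwarz computation; the second requires producing an optimal multi-curve from an energy-minimizing representative.

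For the easy direction, fix any PL representative $\psi \in [\phi]$ and any reduced multi-curve $c \colon C \to G$. For almost every $y \in H$, the composition satisfies $n_{\psi \circ c}(y) = \sum_{x \in \psi^{-1}(y)} n_c(x)$. Applying Cauchy--Schwarz with weights $|\psi'(x)|$ gives
\[
n_{\psi \circ c}(y)^2 \le \Bigl(\sum_{x \in \psi^{-1}(y)} |\psi'(x)|\Bigr) \cdot \sum_{x \in \psi^{-1}(y)} \frac{n_c(x)^2}{|\psi'(x)|}.
\]
Integrating in $y$ and changing variables on each affine piece (via $d\alpha_H(y) = |\psi'(x)|\,d\alpha_G(x)$) bounds $\EL_H(\psi \circ c)$ above by $\Emb(\psi) \cdot \EL_G(c)$. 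Taking suprema over $c$ and infima over $\psi$ yields $\SF[\phi] \le \Emb[\phi]$.

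For the reverse direction, I would first establish that the infimum defining $\Emb[\phi]$ is achieved by some $\psi^* \in [\phi]$, via a compactness and lower-semicontinuity argument among uniformly Lipschitz representatives (after passing to a reduced PL model so that trivial backtracking does not ruin compactness). Set $\lambda := \Emb(\psi^*)$ and define the load function $L(y) := \sum_{x \in (\psi^*)^{-1}(y)} |(\psi^*)'(x)|$. Minimality should force $L \equiv \lambda$ on a ``taut'' subset $T \subset H$ carrying the homotopical content of $\phi$, since any essential slack would allow a local homotopy to reduce the essential supremum. One then builds a multi-curve $c^*$ whose local density at each $x \in (\psi^*)^{-1}(T)$ is proportional to $|(\psi^*)'(x)|$; the resulting constant ratio $n_{c^*}/|(\psi^*)'|$ on each fiber above $T$ makes Cauchy--Schwarz an equality, so $\EL_H[\phi \circ c^*] = \lambda \cdot \EL_G[c^*]$ and $\SF[\phi] \ge \lambda$.

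The principal obstacle is the discrete nature of multi-curves: the density prescribed by $\psi^*$ is a priori a real-valued transverse measure on $G$, and one must realize it as an integer-weighted, locally injective multi-curve. This is analogous to converting a max flow into an integer path decomposition; I would realize the measure by a train-track carrying structure on $G$, approximate the real weights by rationals, clear denominators, and use homogeneity of the Rayleigh quotient $\EL_H[\phi \circ c]/\EL_G[c]$ to pass to a limit. Establishing the regularity of $\psi^*$ carefully enough to support this combinatorial approximation, together with verifying that the taut set $T$ sees the full homotopical obstruction, is where the real work lies.
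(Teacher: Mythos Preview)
This theorem is not proved in the present paper at all: it is a \texttt{citethm} quoted from the companion paper \cite{Thurston16:Elastic}, and no proof is given here. So there is no ``paper's own proof'' to compare against directly.

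That said, the paper does reveal the shape of the argument in \cite{Thurston16:Elastic} through its use in the proof of Proposition~\ref{prop:sf-gen-simp}, which invokes \cite[Proposition~\ref*{Elast:prop:emb-sf-detail}]{Thurston16:Elastic}: the hard direction is obtained by producing a weighted train track~$T$ on a subgraph of~$G$ and a representative $\psi \in [\phi]$ such that $T \to G \overset{\psi}{\to} H$ is a \emph{tight sequence}, meaning the energies multiply exactly. Your plan matches this well --- the Cauchy--Schwarz half is correct as written, and for the reverse direction you correctly identify that one must pass from a real-valued transverse measure (the derivative profile of a minimizer) to genuine multi-curves via a train-track carrier, then take a limit using homogeneity of the Rayleigh quotient. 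The one place where your sketch is slightly misaligned is in locating the taut structure on the target~$H$; in the actual argument the train track lives on the source~$G$ (as a subgraph), and tightness is expressed as $\EL[\psi \circ t] = \EL[t]\cdot \Emb[\psi]$ rather than via a level set in~$H$. But this is a matter of packaging rather than a gap.
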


The above two quantities are also equal to the maximum ratio of
Dirichlet energies between the two graphs. This arises
naturally in
the proof of Theorem~\ref{thm:emb-sf},
and justifies the terminology of ``loosening'' of elastic
graphs. This
fact is not used in the present paper, so we
will not develop it further here.

\dinkus

The fourth ingredient is a relation between extremal lengths on a graph
and on a certain degenerating family of surfaces.
Suppose that $G$ is an elastic ribbon
graph, where a \emph{ribbon graph} has a specified counterclockwise
cyclic order on edges
incident to each vertex (Definition~\ref{def:ribbon-graph}).
Its \emph{$\eps$-thickening} $N_\eps G$ is the
conformal surface obtained by replacing each edge~$e$ of~$G$ by a
rectangle of size $\alpha(e) \times \eps$ and gluing the rectangles at
the vertices using the given cyclic order, as shown in
Figure~\ref{fig:geom-thicken}.
A \emph{ribbon map} $\phi \co G_1 \to G_2$ between ribbon graphs is a
map that lifts to a topological embedding
$N_\eps\phi \co N_\eps G_1 \hookrightarrow N_\eps G_2$
(Definition~\ref{def:ribbon-map}).
\begin{figure}
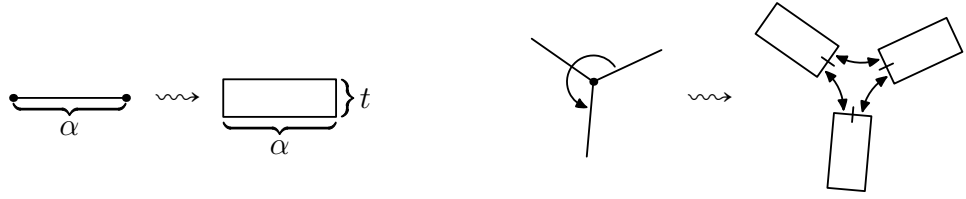

  \[
  \mfigb{surface-10}\;\rightsquigarrow\; \mfigb{surface-11}
  \qquad\qquad\qquad
  \mfigb{surface-0}\;\rightsquigarrow\; \mfigb{surface-2}
  \]
  \caption{Geometrically thickening an elastic ribbon graph. Left: An
    edge of elastic
    length~$\alpha$ is thickened to an $\alpha \times \eps$
    rectangle. Right: At a vertex, glue each half of the end of each
    rectangle to one of the neighbors according to the ribbon structure.}
  \label{fig:geom-thicken}
\end{figure}

\begin{theorem}\label{thm:embedding-thickening}
  Let $G$ and~$H$ be two elastic ribbon graphs with
  only trivalent vertices, and let $\phi \co G \to H$ be a
  ribbon map between them. Let $m$ be the minimal value
  of $\alpha(e)$ for $e$ an edge in~$G$ or~$H$. Then, for $\eps < m/2$,
  \[
  \SF[\phi]/(1+8\eps/m) \le
    \SF[N_\eps\phi] \le
    \SF[\phi]\cdot (1 + 8\eps/m).
  \]
\end{theorem}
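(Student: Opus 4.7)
The plan is to show that $\epsilon$-thickening multiplies the extremal length of every reduced multi-curve by~$1/\epsilon$ up to a factor close to~$1$, and then to divide the estimates for $H$ and for~$G$ to get the stretch-factor bounds. More precisely, I will prove that for every homotopy class of reduced multi-curve~$c$ on~$G$,
\begin{equation*}
(1+C\epsilon/m)^{-1}\cdot\tfrac{1}{\epsilon}\EL_G[c] \;\le\; \EL_{N_\epsilon G}[c] \;\le\; (1+C\epsilon/m)\cdot\tfrac{1}{\epsilon}\EL_G[c]
\end{equation*}
for some absolute constant~$C$, and similarly for~$H$. Via the deformation retraction $N_\epsilon G \to G$, homotopy classes of reduced multi-curves on~$G$ correspond bijectively to the homotopy classes of simple multi-curves on the thickening relevant for Definition~\ref{def:sf-surf}. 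Once the two-sided extremal-length estimate is established, taking the ratio of the bounds for~$\phi\circ c$ over those for~$c$ and then the supremum over~$c$ gives $(1+C\epsilon/m)^{-2}\SF[\phi]\le\SF[N_\epsilon\phi]\le(1+C\epsilon/m)^{2}\SF[\phi]$, and using $\epsilon<m/2$ to linearize the square recovers the claimed factor $1+8\epsilon/m$.

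For the lower bound on $\EL_{N_\epsilon G}[c]$, I use the piecewise-constant conformal metric~$\rho$ equal to~$n_c(e)$ on each $\alpha(e)\times\epsilon$ rectangle, interpolated across the vertex hexagons. A taut representative of the multi-curve runs as $n_c(e)$ parallel longitudinal strands in the rectangle over edge~$e$, yielding
\begin{equation*}
L_\rho(c) \;=\; \sum_{e} n_c(e)^{2}\alpha(e) \;=\; \EL_G[c] \quad\text{and}\quad A_\rho \;=\; \epsilon\,\EL_G[c] + E_{\text{vx}},
\end{equation*}
where the vertex contribution $E_{\text{vx}}$ is $O(\epsilon^{2})$ per vertex. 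Since each edge incident to a vertex has length at least~$m$, the total $E_{\text{vx}}$ is bounded above by $(C\epsilon/m)\cdot\epsilon\,\EL_G[c]$, so $L_\rho^{2}/A_\rho \ge \epsilon^{-1}\EL_G[c]/(1+C\epsilon/m)$.

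For the upper bound on $\EL_{N_\epsilon G}[c]$, I start from an arbitrary admissible conformal metric~$\rho$ and form its fibrewise average $\tilde\rho\co G\to\RR_{\ge 0}$ by integrating in the transverse direction: $\tilde\rho(x)=\epsilon^{-1}\int_{I_x}\rho\,dy$, where $I_x$ denotes the $\epsilon$-fiber through $x\in G$. Any longitudinal strand of a taut representative of~$c$ across the rectangle over edge~$e$ has $\rho$-length at least $\int_e\tilde\rho\,d\alpha$ up to a boundary correction of order~$\epsilon/m$, so $L_\rho(c)\ge\sum_e n_c(e)\int_e\tilde\rho\,d\alpha\cdot(1-O(\epsilon/m))$; Jensen's inequality on each fiber gives $A_\rho\ge\epsilon\int_G\tilde\rho^{2}\,d\alpha$. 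Applying Cauchy--Schwarz with weights~$n_c(e)$ over the edges then produces $L_\rho(c)^{2}\le(1+C\epsilon/m)\cdot\epsilon^{-1}\EL_G[c]\cdot A_\rho$, as required.

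The main obstacle is the bookkeeping at each trivalent vertex, where the rectangle structure degenerates into a hexagonal ``pinwheel'' of area $O(\epsilon^{2})$ and strands of~$c$ could in principle take geometric shortcuts. I resolve this by fixing once and for all a canonical straightening of every multi-curve through each vertex hexagon: because the vertex is trivalent and the ribbon cyclic order determines how strands from the three incident rectangles must pair up across it, such a straightening exists and contributes length and area corrections of orders~$\epsilon$ and~$\epsilon^{2}$ respectively. Each such correction is charged to the adjacent edge of length at least~$m$, so the accumulated distortion is absorbed into a single multiplicative factor $1+O(\epsilon/m)$ on both sides of the extremal-length estimate. Combining the estimates for~$G$ and~$H$, taking the supremum, and using $\epsilon<m/2$ to bound $(1+C\epsilon/m)^{2}\le 1+8\epsilon/m$ (for the absolute constant~$C$ produced above) finishes the proof.
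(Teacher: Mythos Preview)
There is a genuine gap: you have conflated the two curve classes over which the suprema defining $\SF$ are taken. The graph stretch factor $\SF[\phi]$ in Definition~\ref{def:sf-graph} is a supremum over \emph{all} (reduced) multi-curves on~$G$, whereas the surface stretch factor $\SF[N_\epsilon\phi]$ in Definition~\ref{def:sf-surf} is a supremum over \emph{simple} multi-curves on~$N_\epsilon G$. Your claim that ``homotopy classes of reduced multi-curves on~$G$ correspond bijectively to the homotopy classes of simple multi-curves on the thickening'' is false: only those reduced multi-curves $c\colon C\to G$ that happen to be ribbon maps lift to simple multi-curves on~$N_\epsilon G$. Your extremal-length comparison therefore yields at best $\SF_{\mathrm{simp}}[\phi]/(1+8\epsilon/m)\le\SF[N_\epsilon\phi]\le\SF_{\mathrm{simp}}[\phi](1+8\epsilon/m)$, and you still owe the equality $\SF_{\mathrm{simp}}[\phi]=\SF_{\mathrm{gen}}[\phi]$. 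In the paper this is Proposition~\ref{prop:sf-gen-simp}, and it is not a formality: it requires the train-track machinery behind Theorem~\ref{thm:emb-sf} together with an approximation lemma (Lemma~\ref{lem:approx-simple}) showing that the optimal weighted train track can be approximated by simple multi-curves.

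A few smaller points. First, in the paper's construction of $N_\epsilon G$ the rectangles are glued directly along half-ends at each trivalent vertex, so there are no separate ``vertex hexagons'' carrying extra area; the lower bound $\EL_G[c]\le\epsilon\,\EL_{N_\epsilon G}[c]$ holds exactly, with no $(1+C\epsilon/m)$ correction. Second, your upper-bound argument has a sign error: to bound $\EL_{N_\epsilon G}[c]=\sup_\rho L_\rho(c)^2/A_\rho$ from above you need an \emph{upper} bound on $L_\rho(c)$, not the lower bound ``$\rho$-length at least $\int_e\tilde\rho\,d\alpha$'' that you wrote; an averaging argument can be made to work, but as stated the inequality points the wrong way. (The paper instead bounds $\EL_{N_\epsilon G}[c]$ via the annulus characterization~\eqref{eq:el-inf}, constructing explicit embedded annuli and estimating their transverse extremal length.) Finally, since you never compute your constant~$C$, the claim that $(1+C\epsilon/m)^2\le 1+8\epsilon/m$ for $\epsilon<m/2$ is unjustified; in the paper the single factor $1+8\epsilon/m$ arises directly from the explicit annulus estimate in Proposition~\ref{prop:el-surface-graph}, with the other inequality exact.
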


Theorem~\ref{thm:embedding-thickening} is proved in
Section~\ref{sec:thicken}. We can get some intuition for
Theorem~\ref{thm:embedding-thickening} from a corollary,
which motivates the term ``embedding energy'' but is not
otherwise used.

\begin{corollary}\label{cor:asymp-embed}
  Let $G_1$ and~$G_2$ be two elastic ribbon graphs with
  trivalent vertices, and let $\phi \co G_1 \to G_2$ be a ribbon map
  between them.
  Then if $\Emb[\phi] < 1$, for all sufficiently small~$\eps$ there is
  a conformal embedding in $[N_\eps\phi]$. On the other hand, if for
  some sufficiently
  small~$\eps$ there is a conformal embedding in
  $[N_\eps\phi]$, then $\Emb[\phi] \le 1$.  
\end{corollary}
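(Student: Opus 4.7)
The plan is to obtain this corollary as a qualitative packaging of three results already recorded in the excerpt: Theorem~\ref{thm:emb-sf}, which identifies $\Emb[\phi]$ with the stretch factor $\SF[\phi]$ on graphs; Theorem~\ref{thm:emb-surf}, which characterizes when the homotopy class of a topological embedding of Riemann surfaces contains a (possibly annular) conformal embedding via $\SF$; and Theorem~\ref{thm:embedding-thickening}, which supplies the two-sided comparison between $\SF[\phi]$ and $\SF[N_\eps\phi]$ with multiplicative error $1+8\eps/m$. Nothing genuinely new is needed.

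For the forward implication, I would suppose $\Emb[\phi] < 1$. Theorem~\ref{thm:emb-sf} gives $\SF[\phi] < 1$. Choose $\eps > 0$ small enough that simultaneously $\eps < m/2$ and $(1+8\eps/m)\SF[\phi] < 1$; this is possible since $\SF[\phi] < 1$. Theorem~\ref{thm:embedding-thickening} then yields $\SF[N_\eps\phi] \le (1+8\eps/m)\SF[\phi] < 1$, and Theorem~\ref{thm:emb-surf} upgrades the strict inequality to the existence of an annular (in particular, ordinary) conformal embedding in the homotopy class $[N_\eps\phi]$.

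For the converse, suppose there is a conformal embedding in $[N_\eps\phi]$ for some $\eps < m/2$. Theorem~\ref{thm:emb-surf} gives $\SF[N_\eps\phi] \le 1$, and the lower bound in Theorem~\ref{thm:embedding-thickening} then forces
\[
\SF[\phi] \le (1+8\eps/m)\SF[N_\eps\phi] \le 1+8\eps/m.
\]
Letting $\eps$ range over a decreasing sequence of values for which such conformal embeddings exist, the multiplicative error drops out and we conclude $\SF[\phi] \le 1$, hence $\Emb[\phi]\le 1$ by Theorem~\ref{thm:emb-sf}.

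The argument has no real obstacle; the only judgment call is in reading the phrase ``some sufficiently small $\eps$'' in the hypothesis of the converse as permitting $\eps$ to be driven to zero, so that the error term $1+8\eps/m$ can be absorbed in the limit. Note also the asymmetry between the two halves (strict vs.\ non-strict inequality), which is forced by the asymmetry between the two clauses of Theorem~\ref{thm:emb-surf}.
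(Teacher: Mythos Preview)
Your proof is correct and takes essentially the same approach as the paper's one-line argument, which simply cites Theorem~\ref{thm:embedding-thickening}, Theorem~\ref{thm:emb-sf}, and Proposition~\ref{prop:sf-gen-simp}. The only difference is that you invoke Theorem~\ref{thm:emb-surf} explicitly to pass between the stretch-factor inequality on the thickened surfaces and the existence of a conformal embedding, whereas the paper's terse proof leaves that step implicit; your version is arguably the more transparent packaging, and your reading of ``some sufficiently small~$\eps$'' as allowing $\eps \to 0$ is the intended one.
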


\begin{proof}
  Immediate from Theorem~\ref{thm:embedding-thickening},
  Theorem~\ref{thm:emb-sf}, Proposition~\ref{prop:sf-gen-simp}, and
  Theorem~\ref{thm:emb-surf}.
\end{proof}

\dinkus

The fifth and final ingredient is a study of the behavior of the
stretch factors and embedding energy under iteration.  Let
$f \co (S^2,P) \righttoleftarrow$ be a branched self-cover. Then we
have a virtual endomorphism $\pi,\phi \co X_1 \rightrightarrows X_0$,
where the $X_i$ are either Riemann surfaces or elastic
graphs. By iterating, we get a sequence of virtual
endomorphisms $\pi^n, \phi^n \co X_n \rightrightarrows X_0$, each with
its own stretch factor $\SF[\phi^n]$ (Definition~\ref{def:energy-iterate}). From general
principles (Proposition~\ref{prop:asympt-e}), it is not hard to prove
that the stretch factor grows or shrinks exponentially.
That is,
\begin{equation}
\ASF[\pi,\phi] \coloneqq \lim_{n \to \infty} \sqrt[n]{\SF[\phi^n]}
\end{equation}
exists, whether we are
working with elastic graphs or with Riemann surfaces. We call this
limit the \emph{asymptotic stretch factor}. General
principles also show that $\ASF[\pi,\phi]$ doesn't depend on the
particular conformal or elastic structure we start with
(Proposition~\ref{prop:asympt-homotopy}), and furthermore
Theorem~\ref{thm:embedding-thickening}
implies that $\ASF[\pi,\phi]$ is the same in these two
cases (Proposition~\ref{prop:asf-graph-surface}).
Theorem~\ref{thm:rational-surfaces-embed} and Theorem~\ref{thm:sf-cover}
(a strengthening of Theorem~\ref{thm:emb-surf}) then show that
$\ASF[\pi,\phi] < 1$ iff $f$ is
equivalent to a rational map. This is then translated to a proof of
Theorem~\ref{thm:detect-rational}, as explained in
Section~\ref{sec:iter}.

\dinkus

The last two sections have material complementary to the main
proof. In Section~\ref{sec:other-energies}, we explain how a virtual
endomorphism $(\pi,\phi)$ gives an asymptotic energy
$\oE^p_p[\pi,\phi]$ for each~$p$, with $\oE^2_2$ essentially agreeing with
$\ASF[\pi,\phi]$. We give bounds on how $\oE^p_p$ varies as a function
of~$p$ and a few comments on what this might mean.

In Section~\ref{sec:obstructions} we compare
Theorem~\ref{thm:detect-rational} with the original characterization,
Theorem~\ref{thm:thurston-obstruction}.

\subsection{Prior work}
\label{sec:prior}

Kahn's work on degenerating surfaces \cite[Section~3]{Kahn06:BoundsI}
has close ties to this work. In particular, his work is quite similar
in spirit to
Corollary~\ref{cor:asymp-embed}. His notion of \emph{domination} of
weighted arc diagrams is equivalent to embedding
energy being less than one, in the following dualizing sense.
Given a elastic ribbon graph $G = (\Gamma, \alpha)$, each edge~$e$
of~$\Gamma$ has a dual arc~$A_e$ on $N\Gamma$, the arc between
boundary components that meets~$e$ in one point. We can then consider
the dual weighted arc diagram
\[
W_G \coloneqq \sum_{e \in \Edges(\Gamma)} \frac{A_e}{\alpha(e)}.
\]
\begin{proposition}\label{prop:emb-dual}
  If $\phi \co G_1 \to G_2$ is a ribbon map of elastic ribbon graphs, then
  $\Emb[\phi] \le 1$ iff $W_{G_1}$ dominates $W_{G_2}$.
\end{proposition}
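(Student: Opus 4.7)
The plan is to identify both conditions with the same combinatorial data associated to $\phi$, namely the edge covering multiplicities $m_{e,e'}$ recording how many times the combinatorial image of an edge $e \in \Edges(G_1)$ traverses an edge $e' \in \Edges(G_2)$. By Theorem~\ref{thm:emb-sf}, $\Emb[\phi] \le 1$ is equivalent to $\SF[\phi] \le 1$, so the task reduces to matching Kahn's notion of domination with the extremal-length stretch-factor criterion.

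First I would set up the duality explicitly. Compatibility with the ribbon structures ensures that, up to isotopy, the embedding $N\phi \co NG_1 \hookrightarrow NG_2$ places the dual arcs in a standard transverse position: each $A_{e'}^{(2)}$ meets the subsurface $NG_1$ in exactly $m_{e,e'}$ sub-arcs isotopic to $A_e^{(1)}$, together with boundary-parallel pieces. Consequently, both sides of the proposition become expressible purely in terms of the $m_{e,e'}$ and the elastic lengths $\alpha_1$ and $\alpha_2$, which is the combinatorial heart of the duality.

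On the embedding-energy side, I would use the energy-minimizing PL representative in $[\phi]$, whose existence and structural form is supplied by the theory of \cite{Thurston16:Elastic}. This representative parametrizes each maximal sub-arc of $\phi^{-1}(e')$ at constant speed, and the condition $\esssup_{y \in G_2} \sum_{x \in \phi^{-1}(y)} \abs{\phi'(x)} \le 1$ then reduces, after an optimal allocation of sub-arc lengths along each edge of $G_1$, to a minimax system of inequalities indexed by edges of $G_2$. On the domination side, Kahn's definition, translated via the identification $A_e \leftrightarrow 1/\alpha(e)$, unwinds to a weight-respecting carrying condition for the arcs of $W_{G_2}$ by those of $W_{G_1}$ through the inclusion $NG_1 \hookrightarrow NG_2$; when re-expressed in terms of the $m_{e,e'}$, this yields the same minimax system.

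The main obstacle will be the symmetric identification of the two optimization problems: extracting an energy-minimizing PL representative from a domination witness, and conversely reading off an arc-carrying from the optimal PL map. Both directions can be phrased as a bipartite network-flow optimization over the edge-pairs $(e,e')$, and the equivalence follows by matching the primal-dual solutions. A subsidiary point worth checking carefully is that the ``$\le 1$'' threshold on both sides corresponds to the same normalization; this is essentially the normalization built into $W_G = \sum_e A_e/\alpha(e)$ and should fall out of the dictionary once the minimax systems are written down.
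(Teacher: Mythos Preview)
The paper's own proof is one sentence: ``This follows from tracing through the definitions of both notions.'' The proposition is meant to be essentially a dictionary entry---a PL map $\psi\in[\phi]$ with $\Emb(\psi)\le 1$ and a witness that $W_{G_1}$ dominates $W_{G_2}$ are two encodings of the same data, and one translates between them by direct inspection.

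Your plan builds considerably more scaffolding than this requires, and some of it works against you. The invocation of Theorem~\ref{thm:emb-sf} is a detour: the equivalence with domination goes straight through the definition of $\Emb$ in Equation~\eqref{eq:embedding-1}, and indeed two paragraphs later you revert to ``the embedding-energy side'' and a PL representative rather than using $\SF$ at all. More substantively, there is an internal tension in your outline: you first (correctly) assert that both conditions unwind to \emph{the same} system of inequalities in the data $m_{e,e'}$ and the allocated sub-arc lengths, but then identify as the ``main obstacle'' matching them via a primal--dual argument. If they are the same feasibility problem there is nothing to match---no LP duality enters. Concretely, a PL $\psi$ with $\Emb(\psi)\le 1$ already records, at each regular $y\in e'$, how the weight $1/\alpha_2(e')$ of the dual arc $A_{e'}$ is apportioned among the dual arcs $A_e$ (via the summands $\abs{\psi'(x)}$), and conversely Kahn's carrying data assembles directly into such a $\psi$. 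You also do not need the \emph{optimal} representative, only some $\psi$ with $\Emb(\psi)\le 1$; insisting on the minimizer and its structural form is extra work. Your route would still reach the conclusion, but the optimization language (minimax, network flow, primal--dual) anticipates difficulties that are not present and obscures that this is really just an exercise in unpacking the two definitions side by side.
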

The proof follows from tracing through the definitions of both notions.

The notion of weighted arc diagrams is a little more general than
elastic ribbon graphs, as only weighted arc diagrams for which the set of
arcs is filling can be written as~$W_G$
for some~$G$.  See also
\cite[Section~\ref*{summary:sec:dynam-teich}]{Thurston16:RubberBands}.

\medskip

The graphical description of branched self-covers in
Section~\ref{sec:spines} is closely related to the description in
terms of
automata and bisets by Nekrashevych. More
specifically, these graphical descriptions are examples of his
combinatorial models for expanding dynamical
systems~\cite{Nekrashevych14:CombModel}. (Nekrashevych also allows
models with
higher-dimensional cells. See also Section~\ref{sec:future}.)

\medskip

Theorem~\ref{thm:rational-surfaces-embed} has been circulating in the
community. The non-trivial direction is written as \cite[Theorem
5.2]{CPT16:Renorm} or \cite[Theorem 7.1]{Wang14:DecompositionHerman}.

\medskip

The overall plan of this argument was summarized earlier
\cite{Thurston16:RubberBands}, and some of the arguments were sketched
there. For completeness, the logically necessary arguments are
reproduced and expanded here.

\subsection{Future directions}
\label{sec:future}

Theorem~\ref{thm:detect-rational} raises several questions and future
directions.

First, there is the question of the iteration ($n$ in the theorem
statement), and whether it is strictly necessary to iterate to give a
positive certificate for any given rational map. It is first of all
clear that the necessary value of~$n$ depends on the elastic graph
spine~$G$ (including a dependence on the elastic constants). See
\cite[Figure~2]{Thurston16:RubberBands} for one concrete example. So
then the question becomes whether we can always find an elastic
spine~$G$ so that
$n=1$ suffices. There are some
rational where $n=1$ does \emph{not} suffice for any~$G$ by a case
analysis. These
include the barycentric subdivision rational map \cite[Figure
25]{CFKP03:Subdivision},
\[
  f(z) = \frac{4}{27}\frac{(z^2-z+1)^3}{(z(z-1))^2},
\]
and rational map \#3.2 in the census of quadratic rational maps with
at most $4$ post-critical points \cite{BBLPP00:CensusRational},
\[
  f(z) \approx \frac{13.531903z-13.531903}{(z+2.3829758)^2}.
\]
Analysis of the first case is made easier using the symmetries of the
map. Analysis of the second case is more involved, and we do not give
the details. In both these cases, the Julia set is a Sierpiński
carpet. (The second example is the unique quadratic map in that census
with a
Sierpiński carpet Julia set.)

For many other examples, it seems that $n=1$ does suffice. In
particular, a crucial role appears to be played by the \emph{crochet
  maps}, rational maps where two Fatou components can be
connected by a path that intersects the Julia set in only countably
many points. (The terminology was introduced by Dudko, Hlushchanka,
and Schleicher, in ongoing work.) They are also conjecturally the
rational maps for which the Ahlfors regular conformal dimension is
equal to~$1$.
\begin{conjecture}
  For any crochet post-critically finite rational map, $n=1$ suffices
  in Theorem~\ref{thm:detect-rational}: There is an elastic
  (orbi)graph spine~$G$ for which $\Emb_G[\phi] < 1$ without the need
  to iterate.
\end{conjecture}
\begin{question}
  For crochet rational maps, is there a preferred ``best'' spine for
  $\CCa \setminus P$? For polynomials, there is a slight modification
  of the Hubbard tree that serves this purpose \cite[Definition
  8.8]{Thurston16:RubberBands}, and in many cases there appear to be
  good candidates.
\end{question}

\medskip

Beyond this, one might hope for several extensions of
Theorem~\ref{thm:detect-rational}, in a variety of different
directions. We briefly survey some of them.

First, it is possible to relax the restriction to maps of hyperbolic
type in Theorem~\ref{thm:detect-rational}, to allow maps of
non-compact type.
As stated, the theorem is false in this generality. Consider, for
example, the dendritic polynomial $z^2+i$, with
graph virtual endomorphism
$\pi,\phi \colon \Gamma_1 \rightrightarrows \Gamma_0$
  \[
    \mathcenter{\begin{tikzpicture}
      \node (G1) at (0cm, 0cm) {$\mfigb{graphs-51}$};
      \node (G0) at (3cm, 0cm) {$\mfigb{graphs-50}$};
      \draw[bend left=15, ->] (G1.5) to node[above,cdlabel]{\pi} (G0.172);
      \draw[bend right=15, ->] (G1.-5) to node[below,cdlabel]{\phi} (G0.188);
    \end{tikzpicture}}.
  \]
The black loops around the marked points are mapped to themselves by
degree~$1$. This persists under covers, so $\ASF[\pi,\phi] = 1$ and we
do not have the strict inequality we need for
Theorem~\ref{thm:detect-rational}. Essentially, with our definition
of spines, the loops around these marked points count as ``Levy
cycles'', obstructing our criterion.

A proper treatment of this family of maps uses \emph{orbigraphs},
spaces locally modeled on a graph modulo a finite group, so that the
graph virtual endomorphism in this example becomes
  \[
    \mathcenter{\begin{tikzpicture}
      \node (G1) at (0cm, 0cm) {$\mfigb{graphs-53}$};
      \node (G0) at (3cm, 0cm) {$\mfigb{graphs-52}$};
      \draw[bend left=15, ->] (G1.5) to node[above,cdlabel]{\pi} (G0.172);
      \draw[bend right=15, ->] (G1.-5) to node[below,cdlabel]{\phi} (G0.188);
    \end{tikzpicture}},
  \]
where the ``$2$'' mark an orbifold point of order 2, the quotient of
an edge by an involution. Details will appear in a future paper; see
also \cite[Problem 8.22]{Thurston16:RubberBands}.

On the other hand, any graph-based criterion is unlikely to work when
the branched
self-cover $f$ has \emph{no} cycles with branch points. For rational
maps, these are the cases when the Julia set is the whole Riemann sphere~$\CCa$.
The issue is that for a graph virtual endomorphism that is \emph{contracting}
(i.e., in the language of Section~\ref{sec:other-energies},
$\oE^\infty_\infty[\pi,\phi] < 1$), by a result of
Nekrashevych~\cite{Nekrashevych14:CombModel} the Julia set is
homeomorphic to the
inverse limit of graphs $\Gamma_n$ with respect to the
maps~$\phi^n_{n-1}$. (See Definition~\ref{def:correspondence} for
terminology.) But any inverse limit of graphs has topological
dimension~$1$, and so cannot be homeomorphic to~$\CCa$.

\medskip

One might also ask for a generalization of
Theorem~\ref{thm:detect-rational} to allow the
post-critical set~$P$ to be infinite, probably with some other
restrictions. For W. Thurston's original theory, this a fruitful area
of research, with a series of papers
\cite{CJS03:GeomFiniteI,CJS03:GeomFiniteII,JZ09:SubHyp,CT11:CharHyperbolic,CT18:HypParDef}
leading ultimately to an obstruction theorem for
maps where the accumulation set of~$P$ is finite (the \emph{geometrically
finite maps}).

In another direction, exponential maps and other transcendental maps
$\CC \to \CC$ have been a fruitful area of research, with substantial
more complexity than the rational map case. In particular, the natural
analog of Question~\ref{quest:pcf} is almost entirely open. This deals
with topological maps of transcendental type that are post-singularly
finite (to include the forward orbits of asymptotic values). For the
obstruction criterion, there is a result for the exponential family
\cite{HSS09:Exponential}, but this is a special case and it is not
clear what to expect in general. It is natural to ask whether
there is any analogue of the positive criterion presented in this
paper.

In both these cases (geometrically finite or transcendental maps), any
graphical model needs to allow for infinite graphs: either because the
post-critical set is infinite (and we want some sort of spine for its
complement), or because the degree of the cover is infinite. Some of
the ingredients in this paper carry over without issue. For instance,
Propositions~\ref{prop:el-graph-surface}
and~\ref{prop:el-surface-graph}, relating extremal length on graphs
and their thickening to a surface, hold for infinite graphs. (Indeed,
Theorem~\ref{thm:embedding-thickening} can be generalized
considerably, to allow grafting along arbitrary embedded arcs and/or
circles, with some weakening of the conclusion.)
Likewise the quasi-conformal surgery techniques
recalled in Section~\ref{sec:quasi-conf-surg} have been well-studied
in these more general contexts.

Other ingredients appear harder to generalize to the setting of
infinite graphs (or infinitely-generated~$\pi_1$). For instance, in
earlier work, we gave several equivalent conditions for conformal
embeddings of Riemann surfaces of finite type with some ``space''
around them \cite{KPT15:EmbeddingEL}. These conditions are unlikely to
be equivalent in the setting of arbitrary Riemann surfaces.

Another possible generalization is to higher-dimensional maps, for
instance studying maps of higher-dimensional manifolds that are
post-critically finite in a suitable sense. It is not entirely clear
what the right questions or conjectures should be, but one can often
find suitable combinatorial models for such maps as virtual
endomorphisms of CW complexes. Nekrashevych has both a general theory
\cite{Nekrashevych14:CombModel} and concrete examples
\cite{Nekrashevych16:PaperFolding}. The major obstacle to developing
a theory similar to the one in this paper in the higher-dimensional
setting is finding the right analogue of the energies~$E^p_p$ recalled
in Section~\ref{sec:other-energies}. The definitions rely heavily on
the underlying objects being 1-dimensional. (See \cite[Definition
\ref{Elast:def:Epq-3}]{Thurston19:Elastic} for a possible direction
towards a generalization.) Furthermore, the proof of
existence of minimizers from earlier work is combinatorial and
produces piecewise-linear minimizing maps; this approach will not work
in higher dimensions. This remains work in progress.

\medskip

Another direction is investigating the meaning of the additional
energies $\oE^p_p$ in Section~\ref{sec:other-energies}. In work in
progress with Kevin Pilgrim, we relate these energies to the Ahlfors
regular conformal dimension~\cite{PT:ConfDim}.

\subsection*{Acknowledgements}

This project grew out of joint work with Kevin Pilgrim, and owes a great
deal to conversations with him and with Jeremy Kahn.
In addition, there were many helpful conversations and comments from
Maxime Fortier-Bourque, 
Frederick Gardiner, 
Mikhail Hlushchanka,
Volodymyr Nekrashevych,
Tan Lei, and the anonymous referee.

This material is based upon work supported by the National Science
Foundation under Grant Number DMS-1507244.


\section{Spines for branched self-covers}
\label{sec:spines}

\begin{definition}
  A \emph{virtual endomorphism} of a group $G$ is a finite-index
  subgroup $H \subset G$ and a homomorphism $\phi \co H \to G$.

  A \emph{virtual endomorphism} of a topological space~$X$ consists of
  a space~$Y$ and a pair of maps
  \[
  \pi, \phi \co Y \rightrightarrows X
  \]
  where $\pi$ is a covering map of constant, finite degree and $\phi$
  is considered
  up to homotopy.
\end{definition}

A virtual endomorphism of spaces
gives a virtual
endomorphism of groups, as follows. Suppose $X$ and~$Y$ are connected
and locally
connected and $x_0 \in X$
is a basepoint. If we pick $y_0 \in \pi^{-1}(x_0)$, then
$\pi_1(Y,y_0)$ is naturally a
subgroup of~$\pi_1(X,x_0)$. If we
homotop~$\phi$ so that
$\phi(y_0) = x_0$, then $\phi_*$ gives a group homomorphism from $\pi_1(Y,y_0)$ to
$\pi_1(X,x_0)$, i.e., a virtual endomorphism of $\pi_1(X,x_0)$.

Virtual endomorphisms of topological (orbi)spaces are also called
\emph{topological automata} by Nekrashevych
\cite{Nekrashevych14:CombModel}. If you drop the condition that $\pi$
be a covering map, the same structures were called \emph{topological
  graphs} or
\emph{topological correspondences} by Katsura
\cite{Katsura04:ClassCstarI} and \emph{multi-valued dynamical systems}
by Ishii and Smillie \cite{IS10:HomotopyShadowing}.

\begin{definition}\label{def:virt-homotopy-equiv}
  A \emph{homotopy morphism} between two virtual endomorphisms, from
  $\pi_X, \phi_X \co X_1 \rightrightarrows X_0$ to
  $\pi_Y, \phi_Y \co Y_1 \rightrightarrows Y_0$, is a pair of maps
  $f_0 \co X_0 \to Y_0$ and $f_1 \co X_1 \to Y_1$ so that
  \begin{align*}
    f_0 \circ \pi_X &= \pi_Y \circ f_1\\
    f_0 \circ \phi_X &\sim \phi_Y \circ f_1
  \end{align*}
  where $\sim$ means homotopy of maps.

  A \emph{homotopy equivalence} between $(\pi_X,\phi_X)$ and
  $(\pi_Y,\phi_Y)$ is a pair of homotopy morphisms $(f_0,f_1)$ from
  $X$ to~$Y$ and $(g_0,g_1)$ from $Y$ to~$X$, so that
  $f_0 \circ g_0 \sim \id_{Y_0}$ and $g_0 \circ f_0 \sim \id_{X_0}$.
  This implies that $f_1 \circ g_1 \sim \id_{Y_1}$ and
  $g_1 \circ f_1 \sim \id_{X_1}$, as shown below.
  \[
  \begin{tikzpicture}[x=2cm,y=2cm]
    \node (X1) at (0,0) {$X_1$};
    \node (X0) at (1,0) {$X_0$};
    \node (X1') at (0,-1) {$Y_1$};
    \node (X0') at (1,-1) {$Y_0$};
    \draw[->,bend left=15] (X1) to node[above,cdlabel]{\pi_X} (X0);
    \draw[->,bend right=15] (X1) to node[below,cdlabel]{\phi_X} (X0);
    \draw[->,bend left=15] (X1') to node[above,cdlabel]{\pi_Y} (X0');
    \draw[->,bend right=15] (X1') to node[below,cdlabel]{\phi_Y} (X0');
    \draw[->,bend left=20] (X0) to node[right,cdlabel]{f_0} (X0');
    \draw[->,bend left=20] (X0') to node[left,cdlabel]{g_0} (X0);
    \draw[->,bend left=20] (X1) to node[right,cdlabel]{f_1} (X1');
    \draw[->,bend left=20] (X1') to node[left,cdlabel]{g_1} (X1);
    \node at (0,-0.5) {$\sim$};
    \node at (1,-0.5) {$\sim$};
  \end{tikzpicture}
  \]
\end{definition}

If $f \co (\Sigma, P) \righttoleftarrow$ is a branched self-cover of a
surface, let $\Sigma_0 = \Sigma \setminus P$ and
$\Sigma_1 = \Sigma \setminus f^{-1}(P)$. The restriction of~$f$ gives
a covering map $\pi_\Sigma \co \Sigma_1 \to \Sigma_0$, and the natural
inclusion of surfaces gives a map
$\phi_\Sigma \co \Sigma_1 \to \Sigma_0$, together forming a surface virtual
endomorphism
\begin{equation}\label{eq:surf-end}
\pi_\Sigma, \phi_\Sigma \co \Sigma_1 \rightrightarrows \Sigma_0.
\end{equation}

A \emph{spine} of~$\Sigma_0$ is a graph $\Gamma_0 \subset \Sigma_0$ that is
a deformation retract of~$\Sigma_0$. If we replace $\Sigma_0$ in
\eqref{eq:surf-end} by a
spine~$\Gamma_0$, we get spaces and maps
\begin{itemize}
\item $\Gamma_1 = f^{-1}(\Gamma_0)  \subset \Sigma_1$;
\item deformation retractions $\kappa_i \co \Sigma_i \to \Gamma_i$;
\item the restriction of $f$ to a covering of graphs $\pi_\Gamma \co \Gamma_1 \to \Gamma_0$; and
\item $\phi_\Gamma = \kappa_0 \circ \phi_\Sigma \co \Gamma_1 \to \Gamma_0$.
\end{itemize}
These form a graph virtual endomorphism
\begin{equation}\label{eq:graph-end}
  \pi_\Gamma, \phi_\Gamma \co \Gamma_1 \rightrightarrows \Gamma_0.
\end{equation}
Since the $\kappa_i$ are homotopy equivalences, $[\phi_\Gamma]$ is
determined by $[\phi_\Sigma]$.
While $\phi_\Sigma$ is a topological inclusion,
$\phi_\Gamma$ is just a continuous map of graphs. We say $(\pi_\Gamma, \phi_\Gamma)$ is \emph{compatible}
with the branched self-cover~$f$.
Since any two spines for~$\Sigma_0$ are homotopy equivalent, the
homotopy equivalence
class $[\pi_\Gamma, \phi_\Gamma]$ is determined by~$f$.

To go the other direction and recover the branched self-cover from the graph
virtual endomorphism
$\pi_\Gamma, \phi_\Gamma \co \Gamma_1 \rightrightarrows \Gamma_0$, we
need some more
data.

\begin{definition}\label{def:ribbon-graph}
  A \emph{ribbon structure} on a graph~$\Gamma$ is, for each
  vertex~$v$ of~$\Gamma$, a cyclic ordering on the ends of edges
  incident to~$v$, thought of as the counterclockwise ordering. A
  ribbon structure gives a canonical
  thickening of~$\Gamma$ into an oriented surface with
  boundary~$N\Gamma$, the underlying topological surface of the
  Riemann surface $N_\eps\Gamma$ from Figure~\ref{fig:geom-thicken}.
  There is a natural inclusion
  $i_{N\Gamma} \co \Gamma \hookrightarrow N\Gamma$ and projection
  $\pi_{N\Gamma} \co N\Gamma \rightarrow \Gamma$.
\end{definition}

We will prove that a virtual endomorphism of a ribbon graph is
compatible with
at most one branched self-cover.

For an example of what this data looks like, consider the rational
map
\[f(z) = (1+z^2)/(1-z^2),\]
with critical portrait
\begin{equation}\label{eq:crit-portrait}
\raisebox{7pt}{$\mathcenter{\begin{tikzpicture}[node distance=1.75cm]
  \node (0) at (0,0) {$0$};
  \node (1)[right of=0] {$1$};
  \node (inf)[right of=1] {$\infty$};
  \node (n1)[right of=inf] {$-1.$};
  \draw[->] (0) to node[above,cdlabel]{(2)} (1);
  \draw[->] (1) to (inf);
  \draw[->,bend left] (inf) to node[above,cdlabel]{(2)} (n1);
  \draw[->,bend left] (n1) to (inf);
\end{tikzpicture}}$}
\end{equation}
We take $P$ to be the post-critical set $\{-1,1,\infty\}$. We can
take $\Gamma_0$ to be a $\Theta$-graph embedded in
$\Sigma_0 = S^2\setminus P$ and take $\Gamma_1$ to be $f^{-1}(\Gamma_0)$, as
indicated in Figure~\ref{fig:repr-spine}. The map~$\pi$ is the
covering map that preserves labels and orientations on the edges. The
map $\phi$
might, for instance, be chosen so that
\begin{itemize}
\item the two $a$ edges of $\Gamma_1$ map to the $a$ and $b$ edges
  of~$\Gamma_0$;
\item the two $b$ edges of $\Gamma_1$ map to the $c$ edge
  of~$\Gamma_0$; and
\item the two $c$ edges of $\Gamma_1$ map with a constant map to the two vertices
  of~$\Gamma_0$.
\end{itemize}

\begin{figure}
  \centering
  \subcaptionbox{Virtual endomorphism of a
    spine for $S^2 \setminus
    \{-1,1,\infty\}$\label{fig:repr-spine}. The marked point $\infty$
    is at infinity.}
    [.4\linewidth]{
    \begin{tikzpicture}[baseline]
      \node (G1) at (0cm,1.4cm) {$\mfigb{repr-2}$};
      \node (G0) at (0cm,-1.4cm) {$\mfigb{repr-1}$};
      \draw[bend right,->] (G1.-95) to node[left=-2pt,cdlabel]{\phi} (G0.95);
      \draw[bend left,->] (G1.-85) to node[right=-2pt,cdlabel]{\pi} (G0.85);
    \end{tikzpicture}}\qquad
  \subcaptionbox{Virtual endomorphism of a rose
    graph.\label{fig:repr-rose}}
    [.4\linewidth]{
    \begin{tikzpicture}[baseline]
      \node (G1) at (0cm,1.4cm) {$\mfigb{repr-4}$};
      \node (G0) at (0cm,-1.4cm) {$\mfigb{repr-3}$};
      \draw[bend right,->] (G1.-95) to node[left=-2pt,cdlabel]{\phi} (G0.95);
      \draw[bend left,->] (G1.-85) to node[right=-2pt,cdlabel]{\pi} (G0.85);
    \end{tikzpicture}}\\[12pt]
  \subcaptionbox{Dual Moore diagram\label{fig:repr-dual-moore}. $1$ is
    the identity element in the group $F_2 = \langle a,b\rangle$, and
    capital letters denote inverses.}
    [.4\linewidth]{$\mfigb{repr-5}$}\qquad
  \subcaptionbox{Textual description of automaton.\label{fig:repr-text}}
    [.4\linewidth]{
      $\begin{aligned}
        a(0\cdot w) &= 1\cdot b(w)\\
        a(1\cdot w) &= 0\cdot A(w)\\
        b(0\cdot w) &= 1\cdot w\\
        b(1\cdot w) &= 0\cdot w
      \end{aligned}$}
  \caption{Representations of the rational map
    $z \mapsto (1+z^2)/(1-z^2)$.}
  \label{fig:representations}
\end{figure}
To read off the critical portrait, first recall that from a connected
ribbon graph
embedded in the plane, the complementary regions
are intrinsically determined by
following the boundary of the ribbon surface. Thus we can talk about
the regions of $\Gamma_0$ and~$\Gamma_1$.
Then, for instance, the point ``$1$'' is in the region of~$\Gamma_1$
surrounded by an $a$ edge and a $b$ edge, so must map by~$f$ to
the point ``$\infty$'', which is in the exterior region of~$\Gamma_0$,
also surrounded by an $a$ edge and a $b$ edge. On the other hand,
``$\infty$'' in the exterior
region of
$\Gamma_1$ is surrounded by an $a$, $c$, $a$, and~$c$
edge, and so maps with double branching to ``$-1$'', in the region of~$\Gamma_0$
surrounded by $a$ and~$c$. Proceeding in
this way, we recover the critical portrait~\eqref{eq:crit-portrait}.

This data is
essentially equivalent to an automaton in the style of
Nekrashevych \cite{Nekrashevych05:SelfSimilar}. To construct the
automaton, first choose a
spanning tree~$T_0$ inside~$\Gamma_0$ and collapse it to get a
rose graph spine~$R_0$ for~$\Sigma_0$. (A \emph{rose graph} is a graph
with one vertex.) If we collapse
$\pi^{-1}(T_0)$ inside $\Gamma_1$, we get $R_1$, which is likewise a
spine for~$\Sigma_1$. ($R_1$ is not itself a rose graph.)
Since $R_0$ is also a spine for $S^2 \setminus P$, there is a virtual
endomorphism $\pi_R, \phi_R \co R_1 \rightrightarrows R_0$.
In the running example, if we take the spanning tree of~$\Gamma_0$ to
be edge~$c$ in
Figure~\ref{fig:repr-spine}, we get the graphs $R_0$ and~$R_1$ in
Figure~\ref{fig:repr-rose}.

The graph $R_1$ constructed above is quite close to the dual Moore
diagram for the corresponding automaton, shown in
Figure~\ref{fig:repr-dual-moore}
for the running example.
To get from Figure~\ref{fig:repr-rose} to
Figure~\ref{fig:repr-dual-moore}, perform the following steps.
\begin{enumerate}
\item Homotop the graph map $\phi \co R_1 \to R_0$ so that it
  sends vertices to the vertex of~$R_0$. In the example, the two $b$
  edges of~$R_1$ get mapped to points.
\item As a graph, the dual Moore diagram~$D$ is~$R_1$, with vertices
  numbered arbitrarily.
\item Label each edge~$e$ of $D$ by, first, the label of~$e$ in $R_1$ and,
  second, the element of $\pi_1(R_0)$ represented by $\phi(e)$.
\end{enumerate}
The dual Moore diagram encodes an
automaton, which in the example is given textually in
Figure~\ref{fig:repr-text}.

\medskip

Returning to the general theory, not all combinations of a graph virtual
endomorphism
$\pi,\phi \co \Gamma_1 \rightrightarrows \Gamma_0$ and a ribbon graph
structure on~$\Gamma_0$ are compatible with a branched
self-cover.

\begin{definition}\label{def:ribbon-map}
  If $\Gamma$ and $\Gamma'$ are ribbon graphs, a \emph{ribbon map}
  $\phi \co \Gamma \to \Gamma'$ is a map that lifts to
  an orientation-preserving topological embedding
  $N\phi \co N\Gamma \hookrightarrow N\Gamma'$, in the sense that
  $\phi = \pi_{N\Gamma'} \circ N\phi \circ i_{N\Gamma}$.
\end{definition}

\begin{lemma}\label{lem:ribbon-lift}
  If $\phi \co \Gamma \to \Gamma'$ is a map between ribbon graphs and $\Gamma$
  and~$\Gamma'$ are connected, then up to isotopy there is at most one
  orientation-preserving lift $N\phi \co N\Gamma \to N\Gamma'$.
\end{lemma}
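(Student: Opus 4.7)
The plan is to first show that any two orientation-preserving lifts are homotopic, and then to upgrade this homotopy to an ambient isotopy using classical surface-theoretic results.

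For the first step, suppose $N\phi_0, N\phi_1 \co N\Gamma \to N\Gamma'$ are two orientation-preserving lifts of $\phi$. Restricting to $\Gamma \subset N\Gamma$ via $i_{N\Gamma}$, the defining condition of Definition~\ref{def:ribbon-map} gives $\pi_{N\Gamma'} \circ N\phi_i \circ i_{N\Gamma} = \phi$ for $i = 0, 1$. The projection $\pi_{N\Gamma'} \co N\Gamma' \to \Gamma'$ is a deformation retraction whose fibers are contractible: each fiber over an interior point of an edge of $\Gamma'$ is an arc, and over a vertex is a star-shaped contractible tree. Hence any two lifts of $\phi$ through $\pi_{N\Gamma'}$ are homotopic via a fiber-preserving straight-line homotopy, so $N\phi_0 \circ i_{N\Gamma}$ and $N\phi_1 \circ i_{N\Gamma}$ are homotopic as maps $\Gamma \to N\Gamma'$. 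Since $i_{N\Gamma}$ is a homotopy equivalence and $N\Gamma'$ is aspherical (being a compact surface with non-empty boundary, hence a $K(\pi,1)$), this upgrades to a homotopy between $N\phi_0$ and $N\phi_1$ as maps $N\Gamma \to N\Gamma'$.

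For the second step, both $N\phi_0$ and $N\phi_1$ are orientation-preserving topological embeddings of the connected compact oriented surface $N\Gamma$ (which has non-empty boundary, since $\Gamma$ is a graph) into $N\Gamma'$. A classical theorem on 2-manifolds states that two homotopic orientation-preserving topological embeddings of compact oriented surfaces with boundary are ambient isotopic. Applying this to our two lifts yields the desired isotopy between $N\phi_0$ and $N\phi_1$.

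The hardest part will be handling potential degeneracies in the second step. If $\Gamma$ is a tree, then $N\Gamma$ is a disk and the result reduces to the homogeneity of connected oriented surfaces: any two orientation-preserving embeddings of a disk into a connected oriented surface are ambient isotopic. If $\phi$ collapses a cycle so that $N\phi$ fails to be $\pi_1$-injective, then the corresponding annular piece of $N\Gamma$ embeds into a disk region of $N\Gamma'$, and the disk case again applies. In the remaining generic case $N\phi$ is $\pi_1$-injective (the thickened image is an incompressible subsurface of $N\Gamma'$), and the standard homotopy-implies-isotopy theorem for incompressible subsurfaces applies directly; the orientation-preserving assumption rules out the sole alternative of differing by a reflection.
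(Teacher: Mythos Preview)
Your proof is correct and follows essentially the same approach as the paper, which simply cites the classical fact (via Epstein, with the surface version spelled out by Putman and by Fortier~Bourque) that homotopic orientation-preserving embeddings between connected surfaces are isotopic. You are more explicit than the paper about why the two lifts are homotopic in the first place; your final paragraph's case analysis is unnecessary, since the cited surface-theoretic result does not require incompressibility.
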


\begin{proof}
  This follows from the fact that any two
  orientation-preserving homotopic
  embeddings from one connected surface to another are isotopic, which
  in turn follows from work of Epstein
  \cite{Epstein66:Curves2Manifolds} by looking at the
  boundary curves \cite{Putman16:IsotopSurface}. It is also proved
  as a side effect of work of Fortier Bourque on conformal
  embeddings~\cite{FB18:Couch}.
\end{proof}

\begin{definition}
  Suppose that $\pi, \phi \co \Gamma_1 \rightrightarrows \Gamma_0$ is
  a graph virtual endomorphism where $\Gamma_0$ has a ribbon graph
  structure. We can use the covering map~$\pi$ to pull back the ribbon
  structure on~$\Gamma_0$ to a ribbon structure on~$\Gamma_1$. Then we
  say the data form a \emph{ribbon virtual endomorphism} if
  $\phi$ is a ribbon map. A
  \emph{ribbon homotopy morphism} between two
  ribbon virtual
  endomorphisms is a homotopy morphism as in
  Definition~\ref{def:virt-homotopy-equiv} so that~$f_0$ and $f_1$ are
  ribbon maps.
\end{definition}

\begin{remark}
  It is not immediately clear how to give an efficient algorithm to
  check whether a topological map $\phi \co \Gamma \to \Gamma'$
  between ribbon graphs
  is a ribbon map, but we can give an inefficient algorithm.
  If we specify, for each regular point
  $y \in \Gamma'$, the order in which the points in $\phi^{-1}(y)$
  appear on the corresponding cross-section of $N\Gamma'$, it is easy
  to check locally whether there is an embedded lift. Since
  there are only finitely many choices of orders, this can be
  checked algorithmically.
\end{remark}

\begin{definition}
  A map $\phi \co X \to Y$ between locally path-connected topological
  spaces is \emph{$\pi_0$-bijective} if it gives a bijection from the
  connected components of~$X$ to the connected components
  of~$Y$. (Recall that branched self-covers are assumed to be
  $\pi_0$-bijective.) The map~$\phi$
  is \emph{$\pi_1$-surjective} if, for each $x \in X$, the induced map
  $\phi_* \co \pi_1(X, x) \to \pi_1(Y,\phi(x))$ is surjective.
\end{definition}

\begin{theorem}\label{thm:spine-surf}
  Branched self-covers of surfaces
  $f \co (\Sigma, P) \righttoleftarrow$, up to equivalence, are in
  bijection with ribbon virtual endomorphisms
  $\pi,\phi \co \Gamma_1 \rightrightarrows \Gamma_0$ so that $\phi$ is $\pi_0$-bijective and
  $\pi_1$-surjective, up to ribbon homotopy equivalence.
\end{theorem}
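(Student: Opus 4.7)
The plan is to construct a forward map and an inverse map between the two sets in the claimed bijection and verify each is well-defined on equivalence classes. The forward map is essentially the construction described just above the theorem statement: from a branched self-cover $f\colon (\Sigma,P)\righttoleftarrow$, pick a ribbon graph spine $\Gamma_0 \subset \Sigma\setminus P$, set $\Gamma_1 = f^{-1}(\Gamma_0)$, let $\pi_\Gamma$ be the restriction of $f$ (an honest graph covering), and let $\phi_\Gamma$ be the composition of the inclusion $\Gamma_1 \subset \Sigma\setminus f^{-1}(P) \hookrightarrow \Sigma\setminus P$ with a deformation retraction onto $\Gamma_0$. To see that $\phi_\Gamma$ is a ribbon map, use that regular neighborhoods of both spines inherit ribbon structures from the orientation of $\Sigma$ and can be nested $N\Gamma_1 \subset N\Gamma_0$ after a small isotopy. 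The property $\pi_0$-bijective is inherited from $f$; $\pi_1$-surjective holds because $\Sigma\setminus f^{-1}(P) \hookrightarrow \Sigma\setminus P$ only kills loops around the filled-in punctures. Well-definedness up to ribbon homotopy equivalence follows because any two spines of $\Sigma\setminus P$ are related by a canonical homotopy equivalence, and equivalent branched self-covers yield ribbon-homotopic virtual endomorphisms.

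For the inverse map, given a ribbon virtual endomorphism $\pi,\phi\colon \Gamma_1 \rightrightarrows \Gamma_0$ satisfying the two hypotheses, thicken using the ribbon structure on $\Gamma_0$ and the pulled-back structure on $\Gamma_1$. The covering $\pi$ lifts to an honest covering of surfaces with boundary $N\pi\colon N\Gamma_1 \to N\Gamma_0$, and by Lemma~\ref{lem:ribbon-lift} the ribbon map $\phi$ lifts to an orientation-preserving embedding $N\phi\colon N\Gamma_1 \hookrightarrow N\Gamma_0$, unique up to isotopy. Cap off every boundary component of $N\Gamma_0$ by a disk with a marked center to form a closed surface $\Sigma$ with marked set $P$. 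Define $f\colon \Sigma \to \Sigma$ to agree with $N\pi \circ (N\phi)^{-1}$ on $N\phi(N\Gamma_1)$, and extend over each complementary region of $N\phi(N\Gamma_1)$ in $\Sigma$ as a branched cover of a disk, branched at the unique marked point in that region and of degree equal to that of $N\pi$ on the corresponding boundary circle. Tracing through the two constructions then shows they are mutually inverse up to equivalence on each side.

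The technical crux is verifying that each complementary region of $N\phi(N\Gamma_1)$ in $\Sigma$ is a disk containing exactly one point of $P$, which is precisely where both hypotheses on $\phi$ are used: $\pi_1$-surjectivity of $\phi$ lifts to $\pi_1$-surjectivity of $N\phi$, which prevents any essential higher-genus subsurface from appearing in the complement of $N\phi(N\Gamma_1)$ in $N\Gamma_0$ and hence in $\Sigma$, and $\pi_0$-bijectivity handles the component structure. An Euler characteristic count, together with the fact that each boundary circle of $N\phi(N\Gamma_1)$ is sent by $N\pi\circ(N\phi)^{-1}$ to a boundary circle of $N\Gamma_0$ (hence to a loop around a unique point of $P$), pins down the number of complementary disks and confirms that the degrees on boundary circles match the branching data needed at each preimage point. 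I expect the main obstacle to be making the inverse check fully rigorous, in particular controlling the simultaneous ambiguities of choosing spines in the forward direction and choosing isotopy classes of ribbon embeddings $N\phi$ in the reverse direction, and checking compatibility with the equivalence relation of Definition~\ref{def:virt-homotopy-equiv} on both sides.
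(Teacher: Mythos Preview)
Your overall architecture matches the paper's: build the forward map by choosing a spine, and build the inverse by thickening, capping off, and composing $N\pi$ with $(N\phi)^{-1}$. The paper phrases the inverse slightly differently---it caps off $N\Gamma_0$ and $N\Gamma_1$ separately to closed surfaces $\widehat\Sigma_0$, $\widehat\Sigma_1$, extends $N\phi$ to a homeomorphism $g\colon \widehat\Sigma_1\to\widehat\Sigma_0$ and $N\pi$ to a branched cover $h\colon \widehat\Sigma_1\to\widehat\Sigma_0$, then sets $f=h\circ g^{-1}$---but this is equivalent to your direct construction.

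There is, however, a real error in your ``technical crux.'' You assert that each complementary region of $N\phi(N\Gamma_1)$ in $\Sigma$ is a disk containing \emph{exactly one} point of~$P$. This is false whenever $\lvert f^{-1}(P)\rvert > \lvert P\rvert$, which is the generic situation. For example, for the basilica $z\mapsto z^2-1$ with $P=\{-1,0,\infty\}$, the preimage $f^{-1}(P)$ has four points, so $N\Gamma_1$ has four boundary circles but $P$ has only three points; one complementary disk contains no point of~$P$. The paper handles this by distinguishing \emph{collapsed} boundary components $C_1$ of $N\Gamma_1$: those for which $N\phi(C_1)$ bounds an unmarked disk in $\widehat\Sigma_0$. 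The correct statement, which $\pi_1$-surjectivity does yield, is that each complementary region is a disk with \emph{at most one} marked point. Your extension of~$f$ (``branched at the unique marked point'') then needs adjustment: over an unmarked complementary disk, $f$ is still a branched cover of a disk around a point of~$P$, with the branch point lying in the interior but not belonging to~$P$. This is permitted, since $P$ need only contain branch \emph{values}. No Euler characteristic count will give you exactly one marked point per region, because that conclusion is simply wrong.

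A smaller point: your use of $\pi_1$-surjectivity to rule out higher-genus complementary pieces is correct in spirit but vague. The paper's argument is sharper: if $N\phi(C_1)$ were non-separating in $\widehat\Sigma_0$, a dual simple closed curve meeting it once could not lie in the image of $N\phi$, contradicting $\pi_1$-surjectivity; and if it is separating but the complementary piece is not a disk with $\le 1$ marked point, again $\pi_1$-surjectivity fails.
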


\begin{remark}
  Theorem~\ref{thm:spine-surf} does not assume \emph{a priori} that
  the surface $\Sigma$ is a (union of) spheres or
  that the $\Gamma_i$ are planar. Once the theorem is proved
  then the usual Euler characteristic arguments imply that each
  component of~$\Sigma$ is a sphere or a torus, with sphere being by
  far the more interesting case.
\end{remark}

\begin{proof}
  If we are given a branched self-cover $f \co (\Sigma, P)
  \righttoleftarrow$, we have already seen how to pick a compatible spine
  $\Gamma_0 \subset \Sigma \setminus P$ and construct a ribbon virtual
  endomorphism $\pi, \phi \co \Gamma_1 \rightrightarrows \Gamma_0$,
  unique up to ribbon homotopy equivalence. It is immediate that
  $\phi$ is $\pi_0$-bijective and $\pi_1$-surjective.

  It remains to check the other direction. Suppose we have a ribbon
  virtual endomorphism as in the statement.
  Let $\Sigma_0 = N\Gamma_0$ and $\Sigma_1 = N\Gamma_1$. Since $\phi$
  is $\pi_0$-bijective,
  Lemma~\ref{lem:ribbon-lift} tells us the lift $N\phi \co \Sigma_1
  \hookrightarrow \Sigma_0$ is unique.
  Let $\wh{\Sigma}_0$ be the marked
  surface obtained
  by attaching a disk with a marked
  point in the center to each boundary component of~$\Sigma_0$. Let
  $P_0\subset \wh\Sigma_0$ be the set of marked points.

  Recall that a simple closed curve~$C$ on a closed surface is
  separating iff it is homologically trivial, and that it is
  non-separating iff there is a ``dual'' simple closed curve~$C'$
  that intersects~$C$ transversally in one point.

  Let $C_1$ be a boundary component of~$\Sigma_1$ and consider the
  simple curve $C_0 = N\phi(C_1) \subset \Sigma_0 \subset \wh\Sigma_0$. If
  $C_0$ is non-separating, a dual curve
  cannot be homotoped to lie in the
  image of~$N\phi$, contradicting $\pi_1$-surjectivity. So $C_0$ is
  separating and divides $\wh\Sigma_0$ into two components,
  with one component containing the image of~$N\phi$. If the other
  component is not a disk with $0$ or $1$ marked
  points, then again $\phi$ is not $\pi_1$-surjective. If $N\phi(C_1)$ bounds
  a disk with no marked points in~$\wh\Sigma_0$ (so bounds a disk
  in~$\Sigma_0$), say that $C_1$ is \emph{collapsed}.

  Now construct~$\wh\Sigma_1$ by attaching a disk~$D_i$ to
  each boundary component~$C_i$ of~$\Sigma_1$. Mark the center of $D_i$
  if $C_i$ is not
  collapsed. Let
  $P_1 \subset \wh\Sigma_1$ be the set of marked points. By the
  choices made in the construction, $N\phi$ extends to a
  homeomorphism $g \co \wh\Sigma_1 \to \wh\Sigma_0$ inducing a bijection
  from $P_1$ to~$P_0$.

  Since the ribbon structure on~$\Gamma_1$ is the pull-back of
  the ribbon structure on~$\Gamma_0$, the covering map~$\pi$
  extends to a covering of surfaces
  $N\pi \co \Sigma_1 \to \Sigma_0$. Since $N\pi$ restricts to a
  covering map from $\bdy N\Gamma_1$ to $\bdy N\Gamma_0$, we can
  extend $N\pi$ to a branched cover
  $h \co \wh\Sigma_1 \to \wh\Sigma_0$ with $h(P_1) \subset P_0$ and
  branch values contained in~$P_0$.

  The desired branched self-covering is then
  $f = h \circ g^{-1} \co (\wh\Sigma_0,P_0) \righttoleftarrow$. The original virtual endomorphism
  $\pi,\phi \co \Gamma_1 \rightrightarrows \Gamma_0$ is compatible with~$f$.
\end{proof}

\begin{example}\label{examp:rabbits}
  To see that the ribbon structure is necessary in
  Theorem~\ref{thm:spine-surf}, consider the $1/5$
  and $2/5$ rabbit (i.e., the centers of the $1/5$ and $2/5$ bulb of
  the Mandelbrot set), with compatible graph virtual endomorphisms
  shown in
  Figure~\ref{fig:rabbit-spines}. The two branched self-covers are
  different, but the graph virtual endomorphisms are the same except
  for the ribbon structure.
\end{example}

\begin{figure}
  \[
  \begin{tikzpicture}
    \node at (0cm,-0.6cm) {$\mfig{graphs-81}$};
    \node at (3.6cm,+0.2cm) {$\mfig{graphs-80}$};
    \draw[bend left,->] (1.5cm,0.1cm) to node[above,cdlabel]{\pi} (2.1cm,0.1cm);
    \draw[bend right,->] (1.5cm,-0.1cm) to node[below,cdlabel]{\phi} (2.1cm,-0.1cm);
  \end{tikzpicture}
  \qquad\qquad
  \begin{tikzpicture}
    \node at (0cm,-0.6cm) {$\mfig{graphs-83}$};
    \node at (3.6cm,+0.2cm) {$\mfig{graphs-82}$};
    \draw[bend left,->] (1.5cm,0.1cm) to node[above,cdlabel]{\pi} (2.1cm,0.1cm);
    \draw[bend right,->] (1.5cm,-0.1cm) to node[below,cdlabel]{\phi} (2.1cm,-0.1cm);
  \end{tikzpicture}
  \]
  \caption{Spines for the $1/5$ rabbit (left) and $2/5$ rabbit
    (right). There are extra
    marked points at infinity. The map~$\pi$ is the cover
    that preserves colors/labels and the map~$\phi$ is
    determined by the deformation retraction.}
  \label{fig:rabbit-spines}
\end{figure}
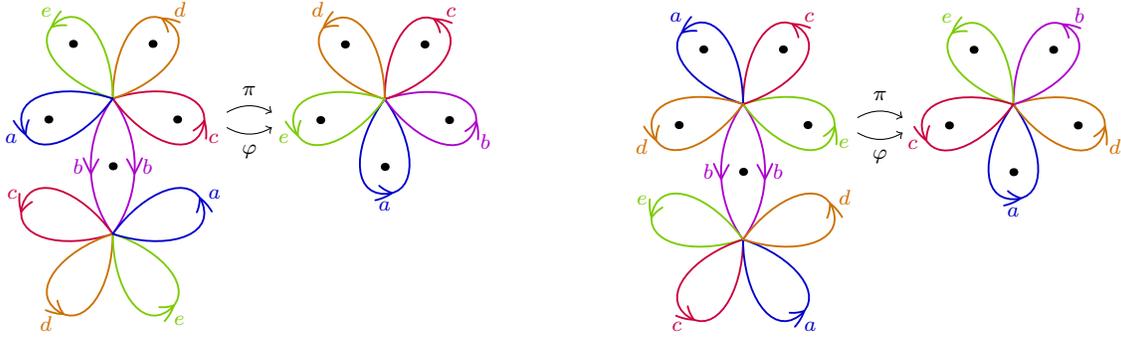

\begin{remark}
  The relationship with Nekrashevych's automata can be used to give
  another proof of the uniqueness part of
  Theorem~\ref{thm:spine-surf} \cite[Theorem
  6.5.2]{Nekrashevych05:SelfSimilar} .
\end{remark}


\section{Quasi-conformal surgery}
\label{sec:quasi-conf-surg}

We now turn to the (standard) characterization of rational maps in terms of
conformal embeddings of surfaces.
For this section, we generalize to maps of non-compact
type, since we can do it with little extra work.

Let $f \co (\Sigma,P) \righttoleftarrow$ be a branched self-cover of
non-compact type. Let $P_F$ (with $F$ for Fatou) be the set of points in~$P$
whose forward orbit under $f$ lands in a cycle with a branch point,
and let $P_J$
(with $J$ for Julia) be $P \setminus P_F$. Let $\Sigma_0$ be the complement of
a disk neighborhood of $P_F$ in~$\Sigma$, with marked subset
$P_{0J} \coloneqq P_J$. Parallel to \eqref{eq:surf-end}, there is a
\emph{branched} virtual endomorphism
\[\pi, \phi \co (\Sigma_1,P_{1J}) \rightrightarrows (\Sigma_0,P_{0J}),
\]
 where
$\Sigma_1 \coloneqq f^{-1}(\Sigma_0)$, $P_{1J}$ is $P_J$ as a subset
of~$\Sigma_1$, $\pi$ is a branched cover with branch
values contained in~$P_{0J}$, and
$\phi$ induces a bijection between $P_{1J}$ and~$P_{0J}$. We consider~$\phi$
up to homotopy relative to~$P_{1J}$.

Given a branched virtual endomorphism of surfaces
$\pi,\phi \co (\Sigma_1,P_{1J}) \rightrightarrows (\Sigma_0,P_{0J})$,
if there is a
conformal structure~$\omega_0$ on~$\Sigma_0$ there is a pull-back
conformal structure $\omega_1 \coloneqq \pi^* \omega_0$
on~$\Sigma_1$. Then $(\pi,\phi)$ is said to be \emph{conformal} with
respect to~$\omega_0$ if $\phi$ is homotopic (rel~$P_{1J}$) to
an annular conformal
embedding from $(\Sigma_1, \omega_1)$ to $(\Sigma_0,\omega_0)$.

\begin{definition}
  The \emph{Teichmüller space} of a branched virtual endomorphism of
  surfaces
  $\pi,\phi\co(\Sigma_1,P_{1J})\rightrightarrows(\Sigma_0,P_{0J})$
  is the space
  $\Teich(\pi,\phi)$ of isotopy classes of complex
  structures~$\omega_0$ on~$\Sigma_0$ so that $(\pi,\phi)$ is
  conformal with respect to~$\omega_0$.
\end{definition}

\begin{remark}
  In the literature, the condition that the embedding~$\phi$ be
  annular is often omitted.
\end{remark}

\begin{citethm}\label{thm:rational-surfaces-embed}
  Let $f \co (\Sigma, P) \righttoleftarrow$ be a branched self-cover
  of non-compact type, and let
  $\pi,\phi \co (\Sigma_1,P_{1J}) \rightrightarrows (\Sigma_0, P_{0J})$ be
  the associated branched virtual endomorphism. Then $f$ is equivalent
  to a rational map iff $\Teich(\pi,\phi)$ is non-empty.
\end{citethm}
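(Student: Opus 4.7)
The plan is to prove the two implications of Theorem~\ref{thm:rational-surfaces-embed} separately.

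\textbf{Forward implication.} Suppose $f\colon(\Sigma,P)\righttoleftarrow$ is equivalent to a rational map; we may take $\Sigma$ with its native complex structure and $f$ holomorphic. Because $f$ is of non-compact type, every component of $\Sigma$ contains a cycle in $P$ containing a branch point, and for a rational map any such cycle is super-attracting. Böttcher coordinates around each super-attracting cycle supply small topological disks $D_1,\dots,D_k$ around the cycle with $f(D_i)\Subset D_{i+1}$. Saturating backwards along the grand orbit of $P_F$ (using that $P_F$ is exactly the set of points whose forward orbit lands in such a cycle), one obtains a disjoint family of disk neighborhoods $U_F\supset P_F$ with $f^{-1}(U_F)\supset U_F$ and strict containment along every periodic cycle. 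Setting $\Sigma_0:=\Sigma\setminus U_F$ and $\Sigma_1:=f^{-1}(\Sigma_0)$, the inclusion $\Sigma_1\hookrightarrow\Sigma_0$ represents $\phi$, and the collars $\overline{U_F}\setminus f^{-1}(U_F)$ provide genuine conformal annuli around every boundary component, making the inclusion annular. Thus the inherited complex structure lies in $\Teich(\pi,\phi)$.

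\textbf{Reverse implication.} Suppose $\omega_0\in\Teich(\pi,\phi)$, and set $\omega_1:=\pi^*\omega_0$. By hypothesis we have an annular conformal embedding $\phi_0\colon(\Sigma_1,\omega_1)\hookrightarrow(\Sigma_0,\omega_0)$ in the homotopy class of $\phi$. Fill each boundary component of $\Sigma_0$ with a disk carrying any chosen complex structure to recover a closed Riemann surface $\widehat\Sigma$ extending $\omega_0$. Extend $\phi_0$ over the attached disks using a local power-map model around each periodic cycle in $P_F$ and pulling back along preperiodic orbits; combined with $\pi$ this yields a candidate branched self-cover $\tilde f\colon\widehat\Sigma\righttoleftarrow$ equivalent to $f$ that is holomorphic off the filled disks and merely quasi-conformal on them. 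To promote $\tilde f$ to a genuine rational map, iterate the induced pull-back $\sigma\colon\Teich(\pi,\phi)\to\Teich(\pi,\phi)$ defined via the correspondence $(\pi,\phi_0)$. The annular hypothesis forces $\sigma$ to be a strict contraction in the Teichmüller metric (using Grötzsch-type modulus bounds on the extension annuli), and $\sigma$ maps a bounded ball into itself. A Banach-type fixed point $\omega_\star$ then exists, and $\widehat\Sigma$ endowed with $\omega_\star$ is a Riemann surface on which $\tilde f$ is globally holomorphic -- the desired rational map.

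\textbf{Main obstacle.} The hard step is the reverse direction, specifically the fixed-point argument: converting the qualitative statement ``$\phi$ is an annular conformal embedding'' into a quantitative contraction estimate on Teichmüller space, and then identifying the limit with a true rational map in the equivalence class of $f$. The strictness encoded in the \emph{annular} extension (as opposed to a bare conformal embedding) is precisely the gap that powers the contraction, paralleling the role of the strict inequality $\SF[\phi_{G,n}]<1$ in Theorem~\ref{thm:detect-rational}; producing this gap in both directions and then exploiting it is the recurrent theme that the rest of the paper carries out in the graphical setting.
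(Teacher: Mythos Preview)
Your forward implication is essentially the paper's argument: the paper uses a Green's function on the Fatou set to cut out a neighborhood of the Julia set with $\overline{f^{-1}(S_0)}\subset S_0$, which is exactly the Böttcher-coordinate construction you describe, written slightly more uniformly.

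Your reverse implication, however, takes a different route from the paper and has a real gap. The paper does \emph{not} run a fixed-point iteration on Teichm\"uller space. Instead it uses the Douady--Hubbard straightening technique (quasi-conformal surgery): starting from $\omega_0\in\Teich(\pi,\phi)$, cap off $S_0$ and $S_1$ with disks to get closed surfaces $\widehat S_0,\widehat S_1$, extend $\phi$ to a diffeomorphism $\widehat\phi$ that is conformal on $S_1$ and on small inner disks $V_1$ (via $z\mapsto z^k$) and merely smooth on the intermediate annuli $D_1\setminus V_1$, and set $f=\widehat\pi\circ\widehat\phi^{-1}$. The crucial observation is that $f$ is conformal on $\phi(S_1)\sqcup D_0$ and $f(D_0)\subset D_0$, so in any forward orbit there is \emph{at most one} point at which $f$ fails to be conformal. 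Hence all iterates $f^{\circ n}$ share the same quasi-conformal constant~$K$, and one invokes Sullivan's averaging / the Measurable Riemann Mapping Theorem directly to obtain an invariant conformal structure. No contraction estimate on any Teichm\"uller space is needed.

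By contrast, your map $\sigma\colon\Teich(\pi,\phi)\to\Teich(\pi,\phi)$ is not actually defined: given $\omega_0$, the pull-back $\pi^*\omega_0$ lives on $\Sigma_1$, and pushing it to a structure on $\Sigma_0$ via $\phi_0$ requires choosing an extension over $\Sigma_0\setminus\phi_0(\Sigma_1)$; it is not clear the result lands back in $\Teich(\pi,\phi)$, let alone that the resulting map strictly contracts or preserves a bounded set. The sentence ``the annular hypothesis forces $\sigma$ to be a strict contraction'' is an assertion, not an argument, and it is precisely the place where the Thurston--Douady--Hubbard iteration (on the Teichm\"uller space of the \emph{closed} marked sphere) is delicate. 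The paper's surgery approach bypasses this difficulty entirely by trading the iteration for a single application of the Measurable Riemann Mapping Theorem, with the annularity used only to leave room for the smooth (non-conformal) interpolation on $D_1\setminus V_1$.
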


\begin{proof}
  We start with the easy direction. If $f$ is equivalent to a rational
  map, replace it with its rational map
  $f \co (\CCa,P) \righttoleftarrow$. Let
  $J \subset \CCa$ be the Julia set of~$f$. Then $P \cap J =
  P_J$.
  Set $S_0$ to be a suitable open neighborhood of~$J$, chosen so that
  $\overline{f^{-1}(S_0)} \subset S_0$. Then we can take $S_1 =
  f^{-1}(S_0)$.

  To be concrete about the ``suitable open neighborhood'', let $F$ be
  the Fatou set of~$f$, and choose a Green's function on~$F$, by which
  we mean a harmonic
  function $G \co F \to (0,\infty]$ (taking the value $\infty$ only at
  discrete points) so that
  \[
  G(f(z)) = \lambda_z G(z)
  \]
  where $\lambda_z > 1$ is a locally constant function on~$F$ so that
  $\lambda_{f(z)} = \lambda_z$. (This implies $G$ goes to zero near
  $\partial F$.) Concretely, if~$z$ is attracted to a
  cycle in~$P_F$ of period~$p$ and total branching~$d$, then
  $\lambda_z = d^{1/p}$. On a basin~$B_i$ of~$F$ with Böttcher
  coordinate $\phi_i \co B_i \to \DD$, we can take
  $G(z) = -K_i \log\, \abs{\phi_i}$ on~$B_i$ for some constant~$K_i$.

  Extend $G$ to all of $\CCa$ by setting $G(z) = 0$ for $z \in J$.
  Pick $\eps > 0$, and take $S_0$ to be the union of
  $G^{-1}([0,\eps])$ and all basins in~$F$ that do not
  contain a point of~$P_F$. We then have a
  conformal branched virtual automorphism
  $\pi,\phi \co (f^{-1}(S_0),P_J) \rightrightarrows (S_0,P_J)$.

  The other direction is a special case of \cite[Theorem
  5.2]{CPT16:Renorm} or \cite[Theorem
  7.1]{Wang14:DecompositionHerman}. The technique is due to Douady
  and Hubbard
  \cite{DH85:DynamicsPolyLike}.
  We sketch the proof here.

  Suppose $\omega_0$ is a point in $\Teich(\pi,\phi)$, and consider
  the corresponding conformal maps
  $\pi,\phi \co (S_1,P_{1J}) \rightrightarrows (S_0,P_{0J})$. We extend $\pi$ and~$\phi$
  to maps
  $\wh\pi, \wh\phi \co (\wh S_1,P_1) \rightrightarrows (\wh S_0,P_0)$
  between closed surfaces as in
  the proof of
  Theorem~\ref{thm:spine-surf}, but with attention to keeping the maps
  conformal except in controlled ways.
  \begin{itemize}
  \item Let $\wh S_0$ be the compact Riemann surface
    without boundary
    obtained by attaching disks to the boundary components
    of~$S_0$. Let $D_0$ be the union of the new disks and let
    $V_0 \subset D_0$ be a union of concentrically
    contained smaller disks. Let $P_{0F}$ be the set of points in the
    center of each component of~$V_0$ (and~$D_0$). Let $P_0 = P_{0J} \sqcup P_{0F}$.
  \item Let $\wh S_1$ be the corresponding conformal branched cover of $\wh S_0$,
    branched at points in~$P_0$, with
    $\wh\pi \co \wh S_1 \to \wh S_0$ extending~$\pi$. Let $D_1$ be
    $\wh \pi^{-1}(D_0)$. Define $P_{1F} \subset \wh S_1$ by picking
    those points in $\wh\pi^{-1}(P_{0F})$ that are in the center of
    non-collapsed boundary components of~$S_1$, as in the proof of
    Theorem~\ref{thm:spine-surf}. Let $V_1 \subset D_1$ be the union of
    those components of $\wh\pi^{-1}(V_0)$ that contain a point
    in~$P_{1F}$ and let $P_1 = P_{1J} \sqcup P_{1F}$.
  \end{itemize}
  Next we will define a diffeomorphism
  $\wh\phi \co (\wh S_1,P_1) \to (\wh S_0,P_0)$ in stages.
  \begin{itemize}
  \item On $S_1$, the map $\wh\phi$ agrees with $\phi$.
  \item Each component of~$V_1$ contains a point $p_1 \in P_{1F}$, which
    corresponds to a point $p_0 \in P_{0F}$. On this
    component, $\wh\phi$ is a conformal identification of the disk of~$V_1$
    to the disk of~$D_0$ containing~$p_0$, mapping $p_1$ to~$p_0$.
  \item It remains to define $\wh\phi$ on $D_1 \setminus V_1$, which
    is a union of annuli and unmarked disks. Define $\wh\phi$ to be an
    arbitrary diffeomorphism extending the maps defined so far. This
    is possible since we haven't changed the isotopy class from the
    homeomorphism from Theorem~\ref{thm:spine-surf}.
  \end{itemize}
  Observe that $\wh\phi$ and~$\wh\phi^{-1}$ are $K$-quasi-conformal
  for some $K \ge 1$, since
  they are
  diffeomorphisms on compact manifolds.
  The map $\wh\phi$ takes the pieces of $\wh S_1$ to the pieces of $\wh S_0$:
  \[
  \begin{tikzpicture}[x=1.5cm,y=1.5cm]
    \node (S1) at (0,1) {$S_1$};
    \node (D1) at (1,1) {$D_1 \setminus V_1$};
    \node (V1) at (2,1) {$V_1$};
    \node (S0) at (0.5,0) {$S_0$};
    \node (D0) at (1.5,0) {$D_0$.};
    \node at ($ (S1.east)!0.5!(D1.west) $) {$\sqcup$};
    \node at ($ (D1.east)!0.5!(V1.west) $) {$\sqcup$};
    \node at ($ (S0.east)!0.5!(D0.west) $) {$\sqcup$};
    \draw[->] (S1) to node[below,sloped]{\small conf.} (S0);
    \draw[->] (D1) to node[below,sloped]{\small q.c.} (S0);
    \draw[->] (V1) to node[below,sloped]{\small conf.} (D0);
  \end{tikzpicture}
  \]

  Now let $f \co (\wh S_0, P_0) \righttoleftarrow$ be
  $\wh\pi \circ \wh\phi^{-1}$. Since $\wh\pi$ is conformal, $f$ is $K$-quasi-regular, $f$ is
  conformal on $\phi(S_1) \sqcup D_0$, and $f(D_0) = V_0 \subset D_0$.
  If $f$ is not conformal at $x \in \wh S_1$, then
  $f(x) \in D_0$, so $f$ is conformal at
  $f(x)$ and at all further forward iterates of $f(x)$. That is, in
  the forward orbit of any $x \in \wh S^1$, there is at most one~$n$
  so that $f$ is not conformal at $f^{\circ n}(x)$. Thus $f^{\circ n}$
  is also $K$-quasi-regular with the same value
  of~$K$. We can therefore apply Sullivan's Averaging Principle to
  find an invariant measurable complex structure on $\wh S_1$, which
  by the Measurable Riemann Mapping Theorem can be straightened to
  give an honest conformal structure and a post-critically finite
  map~\cite{Sullivan81:ErgInf}.
\end{proof}

\begin{remark}
  Theorem~\ref{thm:rational-surfaces-embed} is also true if we take
  $P_F$ to be an invariant subset of the points of~$P$ whose forward
  orbit lands in a branched cycle, as long as there is at least one
  point of $P_F$ in each component of $\Sigma$. For instance, for
  polynomials we may take $P_F = \{\infty\}$; this is essentially the
  setting of Douady-Hubbard's original paper.
\end{remark}


\section{Extremal length on thickened surfaces}
\label{sec:thicken}

Our next goal is to relate extremal length on elastic graphs and on
Riemann surfaces. We first recall
different definitions of extremal length on surfaces.

\begin{definition}
  Let $A$ be a conformal annulus. Then the \emph{extremal length}
  $\EL(A)$ is defined by the following equivalent
  definitions. (Equivalence is standard.)
  \begin{enumerate}
  \item There are real numbers $s,t \in \RR_{>0}$ so that $A$ is
    conformally equivalent to the quotient of
    the rectangle
    $[0,s] \times [0,t]$ by identifying $(0,y)$ with $(s,y)$ for
    $y \in [0,t]$. Then $\EL(A)$ is $s/t$, the circumference divided
    by the height.
  \item Pick a Riemannian metric~$g$ in the conformal
    class of~$A$. For $\rho \co A \to \RR_{\ge 0}$ a suitable (Borel-measurable) scaling
    function, let $\ell_\circ(\rho g)$ be the minimal length, with
    respect to the pseudo-metric $\rho g$, of any curve homotopic to
    the core of~$A$, and let $\Area(\rho g)$ be the area of~$A$ with
    respect to~$\rho
    g$. Then
    \begin{equation}
    \EL(A) = \EL_\circ(A) = \sup_\rho \frac{\ell_\circ(\rho
      g)^2}{\Area(\rho g)}.
    \end{equation}
  \item For $g$ and $\rho$ as above, let $\ell_\perp(\rho g)$ be the
    minimal length with respect to~$\rho g$ of any curve running from
    one boundary component of~$A$ to the other. Then
    \begin{align}
      \EL_\perp(A) &= \sup_\rho \frac{\ell_\perp(\rho g)^2}{\Area(\rho g)}\\
      \EL(A) &= 1/\EL_\perp(A).\label{el-perp}
    \end{align}
  \end{enumerate}
\end{definition}

\begin{definition}
  Let $S$ be a general Riemann surface, and let $(C,c)$ be a simple
  multi-curve on~$S$ (a union of non-intersecting simple closed
  curves), with components $c_i \co C_i \to S$. Then $\EL_S[c]$ is
  defined in the following equivalent ways.
  \begin{enumerate}
  \item If $g$ is a Riemannian
    metric on~$S$ in the distinguished
    conformal class, then
    \begin{equation}\label{eq:el-test-sup}
    \EL_S[c] = \sup_\rho \frac{\ell_{\rho g}[c]^2}{\Area_{\rho g}(S)},
    \end{equation}
    where again $\rho \co S \to \RR_{\ge 0}$ runs over all Borel-measurable
    scaling factors and $\ell_{\rho g}[c]$ is the minimal length
    of any multi-curve in~$[c]$ with respect to~$\rho g$.
  \item Extremal length may be defined by finding the ``fattest'' set
    of annuli
    around~$[c]$, as follows.  For $i=1,\dots,k$, let $A_i$ be a
    (topological) annulus.  Then
    \begin{equation}\label{eq:el-inf}
      \EL[c] = \inf_{\omega, f}\,\,\sum_{i=1}^k \EL_\omega(A_i),
    \end{equation}
    where the infimum runs over all conformal structures~$\omega$
    on the~$A_i$ (i.e., a choice of modulus) and
    over all embeddings $f\co \bigsqcup_i A_i \hookrightarrow S$ that
    are conformal with respect to~$\omega$ and so that
    $f$ restricted to the core curve of~$A_i$ is isotopic to~$c_i$.
  \end{enumerate}
  These two definitions are equivalent
  \cite[Proposition
  \ref*{Emb:prop:el-total-el}]{KPT15:EmbeddingEL}.

  Also define  $\EL_\perp(\bigsqcup A_i)$, the perpendicular extremal
  length of a union of annuli, as the extremal length of the union of
  path families running between boundary components:
  \begin{equation*}
    \EL_\perp\Bigl(\bigsqcup\nolimits_i A_i\Bigr) \coloneqq
      \sup_\rho \frac{\bigl(\min_i \ell_\perp(A_i; \rho g)\bigr)^2}{\Area(\rho g)}.
  \end{equation*}
  With this definition, it is easy to verify that
  $\sum_i \EL(A_i) = 1/\EL_\perp\bigl(\bigsqcup_i A_i\bigr)$.%
  \footnote{Conceptually, in the standard definition of extremal
    length with families of paths, there are two ways to combine path
    families. You can take the \emph{union} of the two families, as in
    the definition of $\EL_\perp(\sqcup A_i)$; this decreases the
    extremal length. Alternately, you can \emph{join} the two
    families, where a valid path consists of one from each family, as
    implicitly happens in $\sum \EL(A_i)$.}
  
  For a non-simple homotopy class of multi-curves on~$S$, use
  Equation~\eqref{eq:el-test-sup} as the definition of extremal
  length.
\end{definition}

To prove Theorem~\ref{thm:embedding-thickening}, we need to estimate
extremal length on $N_\eps G$ from below and above. We prove
two propositions
for the two directions.

\begin{proposition}\label{prop:el-graph-surface}
  Let $G$ be an elastic ribbon graph. Then for $[c]$ any homotopy
  class of
  multi-curve on~$NG$, we have
  \[
  \EL_G[c] \le \eps \EL_{N_\eps G}[c].
  \]
\end{proposition}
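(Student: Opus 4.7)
The plan is to use Definition (1) of extremal length on the surface,
\[
\EL_{N_\eps G}[c] = \sup_\rho \frac{\ell_{\rho g}[c]^2}{\Area_{\rho g}(N_\eps G)},
\]
and to exhibit a single explicit test function $\rho$ tailored to the edge-crossing data of $c$. First I would take a reduced representative of $[c]$ so that the crossing number $n_c(e)$ is constant on each edge $e$ of $G$, and define $\rho \co N_\eps G \to \RR_{\ge 0}$ by $\rho|_{R_e} \equiv n_c(e)$ on the interior of each rectangle $R_e \cong [0,\alpha(e)]\times[0,\eps]$ (with $\rho=0$ on the vertex regions, a set of measure zero). The Euclidean area computation is then immediate:
\[
\Area_{\rho g}(N_\eps G) = \sum_{e \in \Edges(G)} n_c(e)^2\,\alpha(e)\,\eps = \eps\,\EL_G[c].
\]

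The core of the argument is the length lower bound: that every representative $\gamma \in [c]$ on $N_\eps G$ satisfies $\ell_{\rho g}(\gamma) \ge \EL_G[c]$. For each rectangle $R_e$ and each horizontal coordinate $x \in (0,\alpha(e))$, the vertical cross-section $T_{e,x} \subset R_e$ is a properly embedded arc dual to the edge $e$, and I claim its geometric intersection number with $[c]$ is $n_c(e)$, as read off from the reduced representative, which meets each $T_{e,x}$ transversally in exactly $n_c(e)$ points. Thus for $\gamma$ perturbed to be transverse to the cross-sections, $\#(\gamma \cap T_{e,x}) \ge n_c(e)$ for almost every $x$. A Fubini/coarea argument in the flat rectangle then gives $\ell_g(\gamma \cap R_e) \ge n_c(e)\,\alpha(e)$, and since $\rho = n_c(e)$ on $R_e$ this sums to
\[
\ell_{\rho g}(\gamma) \ge \sum_{e} n_c(e)\cdot n_c(e)\,\alpha(e) = \EL_G[c].
\]

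Assembling the pieces,
\[
\EL_{N_\eps G}[c] \;\ge\; \frac{\ell_{\rho g}[c]^2}{\Area_{\rho g}(N_\eps G)} \;\ge\; \frac{\EL_G[c]^2}{\eps\,\EL_G[c]} \;=\; \frac{\EL_G[c]}{\eps},
\]
which rearranges to the desired $\EL_G[c] \le \eps\,\EL_{N_\eps G}[c]$. The main obstacle I expect is the minimum-intersection estimate: verifying that every representative of the homotopy class $[c]$ meets each dual arc $T_{e,x}$ at least $n_c(e)$ times. This is a standard homotopy-minimality statement for curves on a surface with boundary --- essentially, that a reduced representative realizes the geometric intersection number with every properly embedded dual arc --- but it must be handled with some care for possibly non-simple or disconnected $c$, which is exactly the level of generality built into the definition of $\EL_G$ via $n_c(y)^2$ in Equation~\eqref{eq:EL-int}. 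The choice $\rho\propto n_c(e)$ is the Cauchy--Schwarz-optimal one among piecewise-constant weights on the rectangles, so this should give the sharp constant $1/\eps$.
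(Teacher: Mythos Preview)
Your proposal is correct and follows essentially the same approach as the paper: the same test function $\rho = n_c(e)$ on each rectangle, the same area computation, and the same ratio. The paper handles the length lower bound more tersely, simply asserting that the shortest representative of $[c]$ runs down the center of each rectangle (hence has $\rho g$-length $\sum_e n_c(e)^2\alpha(e)$), whereas you supply the more careful justification via geometric intersection with the dual arcs and Fubini; both arrive at the same inequality.
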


(We use $[c]$ for the homotopy class on both $G$ and on $N_\eps G$.)

\begin{proof}
  We use Equation~\eqref{eq:el-test-sup} to estimate
  $\EL_{N_\eps G}[c]$ from below. Take as the base metric~$g$
  the standard
  piecewise-Euclidean metric in which an edge~$e$ gives an $\alpha(e)
  \times \eps$ rectangle $N_\eps e$.

  The test function~$\rho$ is the piecewise-constant function
  which is
  $n_c(e)$ on $N_\eps e$. (Recall that $n_c(e)$ is the number of times
  $c$ runs over~$e$.) Then the shortest representative of~$[c]$ will
  run down the center of each rectangle, so
  \begin{align*}
    \ell_{\rho g}[c] &= \sum_{e \in \Edges(\Gamma)} (n_c(e))^2 \alpha(e).\\
  \intertext{On the other hand, the area is}
    \Area(\rho g) &=
      \sum_{e \in \Edges(G)} (\alpha(e) n_c(e)) \cdot (\eps n_c(e)),
  \end{align*}
  so
  \[
    \eps\EL_{N_\eps G}[c] \ge
    \frac{\eps\ell_{\rho g}[c]^2}{\Area(\rho g)} =
      \frac{\eps\Bigl(\sum n_c(e)^2 \alpha(e)\Bigr)^2}{\eps\sum n_c(e)^2 \alpha(e)}
      = \EL_G[c]. \qedhere
  \]
\end{proof}

\begin{proposition}\label{prop:el-surface-graph}
  Let $G = (\Gamma,\alpha)$ be an elastic ribbon graph with trivalent
  vertices, and let
  $m = \min \{\,\alpha(e)\mid e \in \Edges(\Gamma)\,\}$ be the lowest
  weight of any edge in~$G$.  Then, for $\eps < m/2$ and $c$ any
  simple multi-curve on~$NG$, we have
  \[
  \eps \EL_{N_\eps G}[c] < \EL_G[c]\cdot (1 + 8\eps/m).
  \]
\end{proposition}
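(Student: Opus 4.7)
The plan is to use the infimum characterization of extremal length from Equation~\eqref{eq:el-inf}: construct disjoint conformal annular neighborhoods $A_i$ for each component $c_i$ of $c$ in $N_\eps G$, so that $\EL_{N_\eps G}[c] \le \sum_i \EL(A_i)$. First, I would take a reduced representative of $c$ on $N_\eps G$ that projects via $\pi_{NG}$ to a reduced multi-curve on $G$ with edge multiplicities $n_e \coloneqq n_c(e)$. For each edge $e$ with $n_e > 0$, partition the rectangle $N_\eps e = [0,\alpha(e)] \times [-\eps/2,\eps/2]$ into $n_e$ horizontal sub-rectangles of dimensions $\alpha(e) \times (\eps/n_e)$, assigned to the various components of $c$ passing through $e$. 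At each trivalent vertex, simplicity of $c$ together with the ribbon structure lets us connect these sub-rectangles through a vertex region of diameter $O(\eps)$ by mutually non-crossing arcs; the hypothesis $\eps < m/2$ ensures that the vertex regions at the two endpoints of any edge stay disjoint. Let $A_i$ denote the resulting annulus for $c_i$.

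Next I would estimate each $\EL(A_i)$ by decomposing $A_i$ as a concatenation in series, along its core, of \emph{edge pieces} (the sub-rectangles) and \emph{vertex pieces} (the transition quadrilaterals), and applying subadditivity $\EL(A_i) \le \sum \EL(\text{piece})$. An edge piece for a traversal of edge $e$ is a Euclidean rectangle $\alpha(e) \times (\eps/n_e)$ with extremal length $\alpha(e) n_e/\eps$; summing over all traversals of all components yields the main term
\[
  \sum_i \sum_e n_{c_i}(e) \cdot \frac{\alpha(e) n_e}{\eps}
    = \frac{1}{\eps}\sum_e \alpha(e) n_e^2 = \frac{\EL_G[c]}{\eps}.
\]
A vertex piece for a strand turning at $v$ between edges of multiplicities $n_i,n_j$ has diameter $O(\eps)$ and transverse width at least $\eps/\max(n_i,n_j)$, so by the bound $\EL(Q) \le \Area(Q)/\ell_\perp(Q)^2$ for a quadrilateral its contribution is $O(\max(n_i,n_j))$. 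Since each edge abuts two vertices and carries $n_e$ strands, the total vertex contribution is at most $C\sum_e n_e^2$, and because $\alpha(e) \ge m$ this is at most $C\EL_G[c]/m$. Multiplying by $\eps$ gives
\[
  \eps\EL_{N_\eps G}[c] \le \eps\sum_i \EL(A_i)
    \le \EL_G[c]\left(1 + \frac{C\eps}{m}\right),
\]
and careful bookkeeping of the geometric constants at trivalent vertices yields $C \le 8$.

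The main obstacle is the vertex analysis: verifying that at each trivalent vertex the non-crossing matching of sub-rectangles between the three incoming edges is always realizable (this reduces to a small case check using $n_{ij} = (n_i + n_j - n_k)/2$, possible because $c$ is simple and the ribbon structure supplies a canonical planar embedding) and bounding the area and minimum transverse width of each vertex piece sharply enough to secure $C \le 8$. The strict inequality reflects that the subadditivity estimate is strict unless all pieces align in a degenerate configuration, which does not occur here.
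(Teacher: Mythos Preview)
Your overall plan---construct disjoint annuli around the components of~$c$ inside $N_\eps G$ and bound $\EL_{N_\eps G}[c]$ from above via Equation~\eqref{eq:el-inf}---is the paper's. The gap is in your vertex estimate.

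From ``diameter $O(\eps)$'' you only get $\Area(Q)=O(\eps^2)$, so together with $\ell_\perp(Q)\ge \eps/\max(n_i,n_j)$ the inequality $\EL(Q)\le \Area(Q)/\ell_\perp(Q)^2$ yields $O(\max(n_i,n_j)^2)$, not the $O(\max(n_i,n_j))$ you claim. With the squared bound, summing over strands at a trivalent vertex with multiplicities $n_1\ge n_2\ge n_3$ gives $\sum_{i<j}n_{ij}\max(n_i,n_j)^2 = n_1^3 + n_{23}n_2^2$, which is cubic in the~$n_e$ and cannot be controlled by $\sum_e n_e^2\le \EL_G[c]/m$. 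Sharpening the area bound on each individual strand's piece does not obviously rescue this either: in the Euclidean metric the ratio $\Area/\ell_\perp^2$ is genuinely lossy for a strip whose width changes by a large factor over a short transition.

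The paper's remedy is to replace the Euclidean metric by a rescaled test metric $\rho g$, with $\rho=n_c(e)$ on the central $(\alpha(e)-2\eps)$-portion of $N_\eps e$ and $\rho=\sqrt{5}\,n_c(e)$ on each $\eps\times\eps$ end square (the factor $\sqrt 5$ compensating for the slope of the diagonal transitions). In~$\rho g$ every strip has transverse width uniformly at least~$\eps$, so $\sum_i\EL(A_i)\le \Area_{\rho g}(A)/\eps^2$, and the areas simply add up to $\sum_e\bigl[n_c(e)^2(\alpha(e)-2\eps)\eps+10\,n_c(e)^2\eps^2\bigr]\le \eps\,\EL_G[c](1+8\eps/m)$. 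Your series decomposition would also reach the correct constant if you applied this same rescaling piece by piece; what fails is trying to get by with the Euclidean metric and the crude diameter bound on area.
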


\begin{proof}
  We use Equation~\ref{eq:el-inf} to estimate $\EL_{N_\eps G}[c]$ from
  above.

  First find suitable embedded annuli. We have
  $n_c(e)$ sections of annuli running over $N_\eps e$, which we
  divide into pieces
  corresponding to these different annuli. Divide up the central
  portion of $N_\eps e$ into $n$ horizontal strips of equal
  height $\eps/n$. Inside an $\eps \times \eps$ square near each end, make
  adjustments so the annuli will glue together well at the
  vertices. (These squares do
  not overlap
  since $\eps < m/2$.) Specifically, near one end of~$e$, $n_1$ of the
  annulus sections will continue to the left-hand neighbor of~$e$ at
  the corresponding vertex and $n_2$ will continue to the right-hand
  neighbor, with $n_1 + n_2 = n_c(e)$. Divide the interval $[0,\eps/2]$ into
  $n_1$ equal sections, divide the interval $[\eps/2,\eps]$ into $n_2$ equal
  sections, and connect the corresponding endpoints by diagonal
  lines. Do the same near the other end of~$e$. Let $A$ be the
  resulting union of conformal annuli, as shown in
  Figure~\ref{fig:annuli-estimate}.
  \begin{figure}
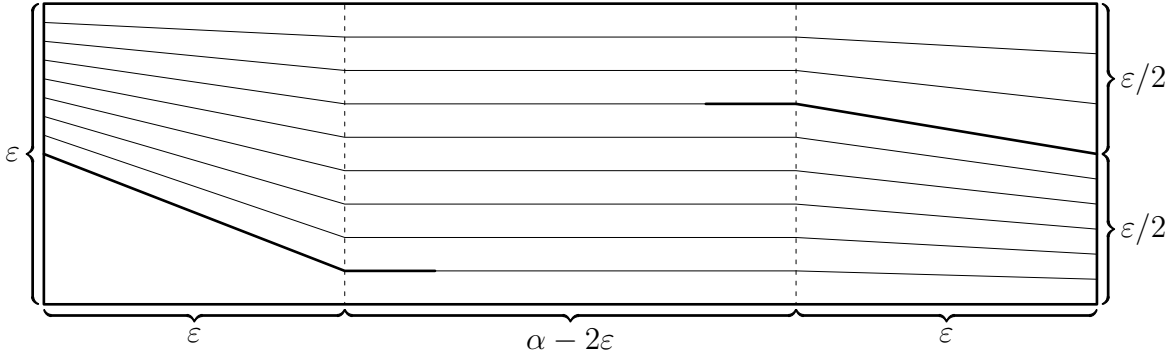

    \[
    \mfigb{el-estimate-0}
    \]
    \caption{The annuli for
      Proposition~\ref{prop:el-surface-graph} inside the rectangle
      corresponding to an edge of elastic
      weight~$\alpha$. Each portion of an
      annulus lies within a strip bounded by horizontal segments
      in the middle and diagonal segments near the ends.}
    \label{fig:annuli-estimate}
  \end{figure}

  To give an upper bound on the total extremal length
  of annuli in~$A$, we will give a lower bound on
  $\EL_\perp(A)$.  We do this by considering the restriction to the
  annuli of a suitable test
  metric~$\rho g$, where $g$ the standard
  piecewise\hyp Euclidean metric from the proof of
  Proposition~\ref{prop:el-graph-surface}.

  With this setup, take $\rho$ to be $n_c(e)$ on the central section
  of $N_\eps e$ and $\sqrt{5} n_c(e)$ on the squares at the ends
  of $N_\eps e$. In the standard metric~$g$, the vertical height of
  the annuli is $\eps/n_c$ in the center section and at least $\eps/2n_c$ in the
  end squares. In the metric $\rho g$, the vertical height is
  at least $\eps$ in the center section and
  $\eps \cdot \sqrt{5}/2$ in the squares. In the squares, since the edges
  of the annuli are sloped, the actual distance $\ell_\perp(A)$
  between the boundaries may be less
  than the vertical height; but since the slope of the edges dividing
  different pieces of~$A$ is in
  $[-1/2, 1/2]$, we have
  \[
  \ell_{\rho g,\perp}(A) \ge \eps \cdot \sqrt{5}/2 \cdot
    \cos(\tan^{-1}(1/2)) = \eps.
  \]
  
  Thus we have
  \begin{align*}
    \Area(\rho g)
       &= \sum_{e \in \Edges(\Gamma)}
                    \bigl[n_c(e)^2 (\alpha(e)-2\eps) \eps +
                     2\cdot 5 n_c(e)^2 \eps^2\bigr] \\
       &\le \eps\EL_G[c]\cdot (1 + 8\eps/m)\\
    \EL_{\perp}(A) &\ge \frac{\ell_{\rho g, \perp}(A)^2}{\Area(\rho g)}
                     \ge \frac{\eps}{\EL_G[c]\cdot (1 + 8\eps/m)}\\
    \eps\EL_{N_\eps G}[c] &\le \eps/\EL_{\perp}(A) \le \EL_G[c]\cdot (1 + 8\eps/m).
  \end{align*}
  The test function~$\rho$ is
  never continuous, so there is a strict inequality.
\end{proof}

The constant in Proposition~\ref{prop:el-surface-graph}
depends only on the local geometry of~$G$ and is thus unchanged under
covers.

\begin{remark}
  The restriction to trivalent graphs in
  Proposition~\ref{prop:el-surface-graph} can presumably be removed.
  Since every graph is homotopy-equivalent to a
  trivalent graph, it is not necessary for our applications.
\end{remark}

We have to do a little more work to deduce
Theorem~\ref{thm:embedding-thickening} from
Propositions~\ref{prop:el-graph-surface}
and~\ref{prop:el-surface-graph}: stretch factor for graphs is defined
with respect to \emph{all} multi-curves, while for surfaces we restrict
attention to \emph{simple} multi-curves. We must check that the difference
between two notions of stretch factor does not matter.

\begin{definition}
  Let $S_1$ and $S_2$ be Riemann surfaces. For $\phi \co S_1
  \hookrightarrow S_2$ a topological embedding, the \emph{simple stretch
  factor} $\SFsimp[\phi]$ is the
  stretch factor from Definition~\ref{def:sf-surf}.
  For $\phi \co S_1 \to S_2$ a continuous map, the \emph{general
    stretch factor} is
  \[
  \SFgen[\phi] \coloneqq
    \sup_{[c] \co C \to S_1} \frac{\EL[\phi \circ c]}{\EL[c]}
  \]
  where the supremum runs over \emph{all} multi-curves
  on~$S_1$ (not necessarily simple).

  Let $G_1$ and $G_2$ be elastic graphs. For $\phi \co G_1 \to G_2$ a
  map between them, the \emph{general stretch factor} $\SFgen[\phi]$
  is the stretch factor from Definition~\ref{def:sf-graph}.

  Now suppose $\phi \co G_1 \to G_2$ is a ribbon map between ribbon
  graphs. A \emph{simple multi-curve}
  on~$G_1$ is a multi-curve $c \co C \to G_1$ that
  lifts to a simple multi-curve on~$NG_1$ (i.e., so that $c$
  is a ribbon map). Then the \emph{simple stretch factor} is
  \[
  \SFsimp[\phi] \coloneqq
    \sup_{\text{$c$ simple on $G_1$}} \frac{\EL[\phi \circ c]}{\EL[c]}
  \]
  where the supremum runs over all homotopy classes of simple multi-curves
  $c\co C \to G_1$ on~$G_1$. Observe that if $c$ is a simple multi-curve,
  then $\phi \circ c$ is
  also a simple multi-curve, since $N\phi$ is an embedding.
\end{definition}

\begin{proposition}\label{prop:sf-gen-simp}
  Let $\phi \co G_1 \to G_2$ be a ribbon map between
  elastic ribbon graphs. Then
  \[
  \SFgen[\phi] = \SFsimp[\phi] = \Emb[\phi].
  \]
\end{proposition}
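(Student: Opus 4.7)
The first equality, $\SFgen[\phi]=\Emb[\phi]$, is just a restatement of Theorem~\ref{thm:emb-sf} (the $\SF$ of Definition~\ref{def:sf-graph} coincides with $\SFgen$ as defined here). The inequality $\SFsimp[\phi]\le\SFgen[\phi]$ is immediate from the definitions, since simple multi-curves form a subclass of all multi-curves. The substance of the proposition is the reverse inequality $\SFsimp[\phi]\ge\SFgen[\phi]$: every achievable stretch ratio is matched by a \emph{simple} multi-curve.

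The plan is to show that any reduced multi-curve $c\co C\to G_1$ can be modified, by local surgery at the vertices of $G_1$, to a simple multi-curve $c_s$ with \emph{identical} edge counts $n_{c_s}(e)=n_c(e)$ for every edge $e$ of $G_1$. At a vertex $v$ with cyclically ordered incident edges $e_1,\dots,e_d$, the $\sum_i n_c(e_i)$ strand-endpoints of $c$ on the boundary of the vertex disk $Nv\subset NG_1$ carry a matching telling which endpoint flows into which. Because $c$ is reduced, this matching has no pair lying on the same edge. I would invoke the standard combinatorial fact (essentially planar chord diagrams, or a train-track construction) that some \emph{non-crossing} matching with no same-edge pair exists whenever any such matching does; the inductive argument is to pick an outermost pair of adjacent endpoints on distinct edges, match them, and recurse. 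Replacing the matching at each vertex by such a choice produces $c_s$, which is simple on $NG_1$ by construction.

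Since edge counts on $G_1$ are preserved and both curves are reduced, $\EL[c_s]=\sum_e\alpha_1(e)n_c(e)^2=\EL[c]$. On the image side, $n_{\phi\circ c_s}(e')=n_{\phi\circ c}(e')$ for every edge $e'$ of $G_2$, because the edge counts of $\phi\circ c$ on $G_2$ depend only on the strand counts on $G_1$ together with the covering behavior of $\phi$, not on the matchings chosen at vertices. Moreover, $\phi\circ c_s$ is reduced: the ribbon-map hypothesis that $N\phi$ is an embedding forces $\phi$ to send distinct half-edges at each vertex of $G_1$ into distinct half-edges of $G_2$, so no folding is introduced and strand passages through vertices cannot double back under $\phi$. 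Therefore $\EL[\phi\circ c_s]=\sum_{e'}\alpha_2(e')n_{\phi\circ c}(e')^2=\EL[\phi\circ c]$, and the stretch ratio is preserved exactly. Taking the supremum over $c$ gives $\SFsimp[\phi]\ge\SFgen[\phi]$.

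The main obstacle is the combinatorial claim that a reduced non-crossing matching exists at each vertex whenever a reduced matching exists. The necessary hypothesis, $2n_c(e_i)\le\sum_j n_c(e_j)$ for every $i$, is automatic from $c$ being reduced, and under it the inductive argument above goes through; the only care needed is at monovalent or bivalent vertices and at boundary points of $C$, but these cases are degenerate and handled directly. Once this combinatorial step is in place, the rest of the argument is bookkeeping with the formula $\EL=\sum\alpha(e)n(e)^2$ and the embedding property of $N\phi$.
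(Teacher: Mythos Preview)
Your opening is right: $\SFgen[\phi]=\Emb[\phi]$ is Theorem~\ref{thm:emb-sf}, and $\SFsimp[\phi]\le\SFgen[\phi]$ is immediate. The surgery step---replacing the matching at each vertex by a non-crossing one with the same half-edge multiplicities---is also sound; it is exactly the integer, even-weight case of Lemma~\ref{lem:approx-simple}, and the triangle inequalities $2n_c(e_i)\le\sum_j n_c(e_j)$ do follow from $c$ being reduced.

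The gap is in the sentence ``the ribbon-map hypothesis that $N\phi$ is an embedding forces $\phi$ to send distinct half-edges at each vertex of $G_1$ into distinct half-edges of $G_2$.'' This is false: ribbon maps may fold. For a minimal example, let $G_1$ be a path $u\,e_1\,v\,e_2\,w$ and $G_2$ a single edge~$f$, and let $\phi$ send $e_1$ onto~$f$ and $e_2$ onto~$f^{-1}$; the lift $N\phi$ is the obvious U-shaped embedding of one strip into another, yet both half-edges at~$v$ map to the same half-edge at~$\phi(v)$. The same phenomenon occurs in graphs with nontrivial curves (e.g., a barbell with $\phi(b)=e^{-1}ae$). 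Once $\phi$ can fold, $\phi\circ c_s$ need not be reduced even when $c_s$ is, so you cannot compute $\EL_{G_2}[\phi\circ c_s]$ from the unreduced edge counts $n_{\phi\circ c}(e')$. Worse, your surgery changes the homotopy class of~$c$, and the cancellations that occur in reducing $\phi\circ c_s$ on~$G_2$ can differ from those in reducing $\phi\circ c$; so neither $\EL[\phi\circ c_s]=\EL[\phi\circ c]$ nor even $\EL[\phi\circ c_s]\ge\EL[\phi\circ c]$ follows from equality of unreduced counts.

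The paper sidesteps this precisely by \emph{not} starting from an arbitrary curve. It invokes the optimizer from \cite[Proposition~\ref*{Elast:prop:emb-sf-detail}]{Thurston16:Elastic}: there is a weighted train track $T\hookrightarrow G_1$ and a representative $\psi\in[\phi]$ for which the composite $T\to G_1\to G_2$ is a \emph{train-track map} and the sequence is tight. The train-track property is exactly what guarantees that legal curves on~$T$ stay reduced after applying~$\psi$. One then uses Lemma~\ref{lem:approx-simple} to approximate the optimal weights on~$T$ by simple train-track curves $c_i$, so both $t\circ c_i$ and $\psi\circ t\circ c_i$ are reduced and the ratios converge to $\Emb[\phi]$. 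Your surgery idea is the right combinatorics for producing simple curves, but it needs to be applied on this optimal train track rather than to an arbitrary reduced curve, so that reducedness downstairs is preserved.
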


To prove Proposition~\ref{prop:sf-gen-simp}, we use train tracks.

\begin{definition}[{\cite[Definition \ref*{Elast:def:tt}]{Thurston19:Elastic}}]
  A \emph{train track}~$T$ is a graph in which the edges incident to
  each vertex are partitioned into equivalence classes, called
  \emph{gates}, with at least two gates at each vertex.  A
  \emph{train-track (multi-)curve} on~$T$ is a (multi-)curve that enters
  and leaves
  by different gates each time it passes through a vertex. A
  \emph{weighted train track} is a train track~$T$ with a weight
  $w(e)$ for each edge~$e$ of~$T$, satisfying a triangle inequality at
  each gate~$g$ of each vertex~$v$:
  \begin{equation}\label{eq:tt-triangle}
    w(g) \le \sum_{\substack{g'\text{ gate at $v$}\\g' \ne g}} \!w(g')
  \end{equation}
  where $w(g)$ is the sum of the weights of all edges in~$g$. (If
  there are only two gates at~$v$, this inequality is necessarily an equality.)
\end{definition}

\begin{lemma}\label{lem:approx-simple}
  Let $(T,w)$ be a weighted train track with a ribbon structure. Then
  there is a sequence
  of simple train-track multi-curves $(C_i, c_i)$ and positive weighting factors $k_i$
  so that $k_i c_i$ approximates~$w$, in the sense that
  \[
  \lim_{i \to \infty} k_i n_{c_i} = w.
  \]
\end{lemma}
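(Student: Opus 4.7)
The plan is to approximate $w$ by a sequence of rational train-track weightings, scale to integer weightings with even values on every edge, and then realize each such integer weighting as the edge-count function of a simple train-track multi-curve by choosing a non-crossing matching of strand-ends at every vertex that is consistent with the ribbon structure.

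First, the cone $P \subset \mathbb{R}^{E(T)}_{\ge 0}$ of non-negative weightings satisfying \eqref{eq:tt-triangle} is a rational polyhedral cone (the defining inequalities have $\pm 1$ coefficients), so its rational points are dense in $P$. Choose rational $w_i \in P$ with $w_i \to w$, and let $N_i$ be twice a common denominator of the edge-values of $w_i$. Then $W_i \coloneqq N_i w_i$ is a non-negative integer train-track weighting with $W_i(e)$ even on every edge; in particular, the total strand count $\sum_{e \ni v} W_i(e)$ is even at every vertex~$v$.

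Next, I realize $W_i$ by a simple multi-curve on~$T$. At each vertex~$v$, let $g_1,\dots,g_d$ be the gates in their ribbon cyclic order, and set $a_j \coloneqq W_i(g_j)$. Place $\sum_j a_j$ strand-ends on the boundary circle of a small disk neighborhood of~$v$, partitioned into $d$ arcs of sizes $a_1,\dots,a_d$ in that cyclic order. The triangle inequality together with the even parity of $\sum_j a_j$ imply, by induction on $\sum_j a_j$ (at each step, match an adjacent pair of strand-ends that lie in different gates, being careful to include a point from a gate of maximum weight whenever some $a_j$ equals $\tfrac{1}{2}\sum_k a_k$, so that both the triangle inequality and even parity are preserved), that a non-crossing matching of these points into chords connecting distinct arcs exists. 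Gluing $W_i(e)$ parallel copies of each edge~$e$ along the chords produced at its endpoints yields a 1-manifold $C_i$ and a train-track map $c_i \co C_i \to T$ with $n_{c_i} = W_i$; non-crossingness of the matching at every vertex ensures that $c_i$ lifts to an embedded curve on~$NT$, so $c_i$ is a simple multi-curve. Setting $k_i \coloneqq 1/N_i$ then gives $k_i n_{c_i} = w_i \to w$, as required.

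The main obstacle is the inductive construction of the non-crossing matching at each vertex: the triangle inequality together with even total strand count are exactly the conditions needed, but the parity is not automatic from \eqref{eq:tt-triangle}, which is why I double the common denominator in the first step. A secondary point to verify is that the density of rational points in $P$ does not require $w$ to lie in the interior of~$P$: because $P$ is a rational polyhedron, rational points are dense in every face, so the argument works whether or not some of the inequalities \eqref{eq:tt-triangle} are tight at~$w$.
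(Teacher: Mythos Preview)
Your approach is essentially the paper's: approximate $w$ by integer weightings with even parity, then realize each such weighting by a simple train-track multi-curve via a non-crossing matching of strand-ends at each vertex. Your device for the tight case (always include a point from a maximum-weight gate) is a clean alternative to the paper's step of first smoothing to two gates when a triangle inequality is an equality.

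There is one gap. You write ``let $g_1,\dots,g_d$ be the gates in their ribbon cyclic order'' and then place the strand-ends ``partitioned into $d$ arcs of sizes $a_1,\dots,a_d$''. This presupposes that the edges belonging to each gate are contiguous in the ribbon order at~$v$, i.e., that the train-track and ribbon structures are compatible. The lemma makes no such assumption, and the paper remarks on this explicitly. This matters because the non-crossing condition must be checked with respect to the actual cyclic arrangement of strand-ends on $\partial(N v)$, which is determined by the ribbon order of \emph{edges}, not gates. The fix is minor: place the strand-ends on the boundary circle grouped by edge in ribbon order and merely labeled by gate. Your inductive step---match an adjacent pair from different gates, choosing one from a maximum-weight gate whenever some $a_j = \tfrac12\sum_k a_k$---then goes through unchanged, since such an adjacent pair exists whenever at least two gates have positive weight, and your parity argument for preserving the triangle inequalities does not use contiguity.
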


This lemma is close to standard facts in the theory of train
tracks. Note there is no assumption that the train track structure and
the ribbon structure are compatible. (This avoids assuming that the
optimizer for $\Emb[\phi]$ is a ribbon map.)
Compare
Lemma~\ref{lem:approx-simple} to \cite[Proposition
\ref*{Elast:prop:tt-curve}]{Thurston19:Elastic}, which gives the exact
weights (without approximation), but yields multi-curves that are not simple.

\begin{proof}
  We first prove that if $w$ is integer-valued and has even total
  weight at each vertex then there is a simple train-track multi-curve
  $(C,c)$ so that $n_c = w$. 

  On each edge~$e$ of~$T$, take $w(e)$ parallel
  strands on $Ne$. We must show how to stitch together these strands
  at the vertices without crossing strands or making illegal
  train-track
  turns. Focus on a vertex~$v$. If one of the incident edges has zero
  weight, delete it. If one of the train-track triangle inequalities
  is an equality, smooth the vertex (in the sense of
  \cite[Definition \ref*{Elast:def:tt}]{Thurston19:Elastic}) so that there are
  only two gates at~$v$.
  After this, if there are at least three gates
  at~$v$, then all inequalities are strict and Equation~\eqref{eq:tt-triangle}
  is strengthened to
  \begin{equation}
    \label{eq:triang-strict}
    w(g) \le -2 + \sum_{\substack{\text{$g'$ gate at $v$}\\g' \ne g}} \!w(g')
  \end{equation}
  by the parity condition.

  In either the two-gate or more-gate case, find any two edges at~$v$
  that are adjacent in the ribbon
  structure and belong to different gates. Join adjacent outermost
  strands from these two edges. We are left with a smaller problem,
  where the weights on these two strands are reduced by~$1$. The
  train-track inequalities are still satisfied, using
  Equation~\eqref{eq:triang-strict} when there are three or more
  gates. By induction we can join up all the strands to a
  multi-curve~$C$. Because we always join strands that are adjacent among
  the remaining strands in the ribbon structure, $C$ is
  simple.

  For general weights, find a sequence of even integer
  weights~$w_i$ on~$T$ and factors $k_i$ so that
  $\lim_{i \to \infty} k_i w_i = w$
  and the $w_i$ satisfy the train-track inequalities for~$T$. The
  above argument gives a simple multi-curve $(C_i,c_i)$ for each~$w_i$, as
  desired.
\end{proof}

\begin{proof}[Proof of Proposition~\ref{prop:sf-gen-simp}]
  We already know that $\Emb[\phi] = \SFgen[\phi]$. From the
  definition, it is clear that $\SFsimp[\phi] \le
  \SFgen[\phi]$. It remains to prove that $\Emb[\phi] \le
  \SFsimp[\phi]$.
  By \cite[Proposition
  \ref*{Elast:prop:emb-sf-detail}]{Thurston19:Elastic}, there is a
  weighted train-track~$T$ that fits into a
  tight sequence
  \[
  \shortseq{T}{t}{G_1}{\psi}{G_2}
  \]
  where $t$ is the inclusion of a subgraph, $\psi \in [\phi]$ and
  $\psi \circ t$ is a train-track
  map. (``Tight'' means that the energies are multiplicative, which in
  this case means that $\EL[\psi \circ t] = \EL[t] \Emb[\psi]$, and
  furthermore all three maps $t$, $\psi$, and $\psi \circ t$ are
  minimizers in their homotopy classes.) $T$
  inherits a ribbon structure as a subgraph of~$G_1$.
  By Lemma~\ref{lem:approx-simple}, we can find a sequence of simple
  multi-curves
  $(c_i,C_i)$ on~$T$ so that
  \[
  C_i \overset{c_i}{\longrightarrow} T
      \overset{t}{\longrightarrow} G_1
      \overset{\psi}{\longrightarrow} G_2
  \]
  approaches a tight sequence, in the sense that $t \circ c_i$ and
  $\psi \circ t \circ c_i$ are both reduced and
  \[
  \lim_{i \to \infty} \frac{\EL[\psi \circ t\circ c_i]}
       {\EL[t \circ c_i]}
     = \frac{\EL[\psi \circ t]}{\EL[t]} = \Emb[\phi].
  \]
  Since $t$ is an inclusion, the sequence of weighted multi-curves
  $t \circ c_i$ is simple, as desired.
\end{proof}

\begin{proof}[Proof of Theorem~\ref{thm:embedding-thickening}]
  Immediate from Propositions~\ref{prop:el-graph-surface},
  \ref{prop:el-surface-graph}, and~\ref{prop:sf-gen-simp}.
\end{proof}

\begin{question}
  For $\phi \co S_1 \hookrightarrow S_2$ a topological embedding of
  Riemann surfaces, how does $\SFgen[\phi]$ behave? By considering
  quadratic differentials, it is not hard to
  see that if $\phi$ is not homotopic to an annular conformal
  embedding, then
  \begin{align*}
    \SFsimp[\phi] &= \SFgen[\phi] \ge 1.\\
    \intertext{On the other hand, if $\phi$ is homotopic to a
    conformal embedding then}
    \SFsimp[\phi] &\le \SFgen[\phi] \le 1.
  \end{align*}
  But this leaves many questions open.
\end{question}


\section{Iteration and asymptotic stretch factor}
\label{sec:iter}

\subsection{General theory}
\label{sec:general-theory}
To complete the
proof of Theorem~\ref{thm:detect-rational}, we turn to the behavior
of energies under iteration.
Recall first that if $\phi \co X_1 \to X_0$ is any continuous map and
$\pi_0 \co Y_0 \to X_0$ is a covering map, we can form the pull-back
\[
\begin{tikzpicture}[x=1.5cm,y=1.5cm]
  \node (X1) at (0,0) {$X_1$};
  \node (X0) at (1,0) {$X_0.$};
  \node (Y1) at (0,1) {$Y_1$};
  \node (Y0) at (1,1) {$Y_0$};
  \draw[->] (X1) to node[above,cdlabel]{\phi} (X0);
  \draw[->] (Y0) to node[right,cdlabel]{\pi_0} (X0);
  \draw[->,dashed] (Y1) to node[above,cdlabel]{\wt\phi} (Y0);
  \draw[->,dashed] (Y1) to node[right,cdlabel]{\pi_1} (X1);
\end{tikzpicture}
\]
Then $\pi_1$ is also a covering map. In this setting,
we call $\wt{\phi}$ a \emph{cover} of~$\phi$.

\begin{definition}\label{def:correspondence}
  A \emph{topological correspondence} is a pair of topological spaces
  $X_1$ and~$X_0$ and a pair of maps between them: $\pi,\phi \co X_1
  \rightrightarrows X_0$.

  For $n \ge 0$, the $n$'th \emph{orbit space} $X_n$
  of a topological correspondence is the $n$-fold product of $X_1$
  with itself over~$X_0$ using $\pi$ and~$\phi$, i.e., the pull-back $X_n$
  in Figure~\ref{fig:pullback}.
\begin{figure}
  \[
  \begin{tikzpicture}
    \node (X00) at (0,0) {$X_0$};
    \node (X01) at (2,0) {$X_0$};
    \node (X02)at (4,0) {$\cdots$};
    \node (X03) at (6,0) {$X_0$};
    \node (X04) at (8,0) {$X_0$};

    \node (X10) at (1,1) {$X_1$};
    \node (X11) at (3,1) {$X_1$};
    \node at (4,1) {$\cdots$};
    \node (X12) at (5,1) {$X_1$};
    \node (X13) at (7,1) {$X_1$};

    \node (X3) at (4,4) {$X_n$};

    \draw[->] (X10) to node[above left=-1mm,cdlabel]{\phi} (X00);
    \draw[->] (X11) to node[above left=-1mm,cdlabel]{\phi} (X01);
    \draw (X12) to (barycentric cs:X12=1,X02=1.2);
    \draw[->] (X13) to node[above left=-1mm,cdlabel]{\phi} (X03);

    \draw[->] (X10) to node[above right=-1mm,cdlabel]{\pi} (X01);
    \draw (X11) to (barycentric cs:X11=1,X02=1.2);
    \draw[->] (X12) to node[above right=-1mm,cdlabel]{\pi} (X03);
    \draw[->] (X13) to node[above right=-1mm,cdlabel]{\pi} (X04);


    \draw[->,dashed] (X3) to (X10);
    \draw[->,dashed] (X3) to (X11);
    \draw[->,dashed] (X3) to (X12);
    \draw[->,dashed] (X3) to (X13);

    \draw[->,dashed,bend right] (X3) to node[above left=-1mm,cdlabel]{\phi^n} (X00);
    \draw[->,dashed,bend left] (X3) to node[above right=-1mm,cdlabel]{\pi^n} (X04);

    \draw[decorate,decoration={brace,amplitude=6pt}] (X04.south)--
      node[below=5pt]{$n+1$ copies of $X_0$, $n$ copies of $X_1$}
      (X00.south);
  \end{tikzpicture}
  \]
  \caption{The pullback construction of $X_n$.}\label{fig:pullback}
\end{figure}
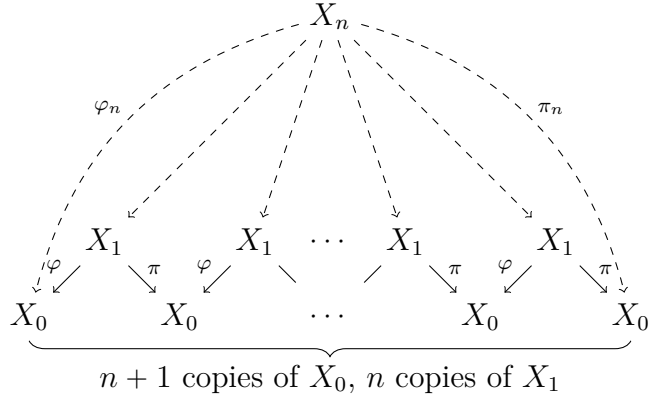
  Concretely, $X_n$ is the set of tuples
  \[
  (y_0,x_1,y_1,x_2,y_2,\dots,y_{n-1},x_n,y_n) \in X_0 \times (X_1 \times X_0)^n
  \]
  so that $\phi(x_i) = y_{i-1}$ and $\pi(x_i) = y_i$ for $1 \le i
  \le n$, with the
  subspace topology. Of the natural maps from $X_n$ to $X_0$, we distinguish
  \begin{align*}
    \phi^n(y_0,x_1,\dots,x_n,y_n) &= y_0\\
    \pi^n(y_0,x_1,\dots,x_n,y_n) &= y_n.
  \end{align*}
  The corresponding \emph{iterate} of $(\pi,\phi)$ is the virtual
  endomorphism $\pi^n,\phi^n \co X_n \rightrightarrows X_0$.
  If
  $(\pi,\phi)$ is a virtual endomorphism, so is $(\pi^n,\phi^n)$. We
  also distinguish intermediate maps $\pi^n_k, \phi^n_k \co X_n
  \rightrightarrows X_k$ for $k \le n$ by
  \begin{align*}
    \phi^n_k(y_0,x_1,\dots,x_n,y_n) &= (y_0,x_1,\dots,x_k,y_k)\\
    \pi^n_k(y_0,x_1,\dots,x_n,y_n) &= (y_{n-k},x_{n-k+1},\dots,x_n,y_n).
  \end{align*}
  (The convention is that superscripts refer to the domain and
  subscripts refer to the range. This fits well with seeing $\phi^n$
  as an ``iterate'' of $\phi$.)

  If we have two topological correspondences
  $\pi_X, \phi_X \co X_1 \rightrightarrows X_0$ and
  $\pi_Y, \phi_Y \co Y_1 \rightrightarrows Y_0$ and a morphism
  $(f_0, f_1)$ from $(\pi_X, \phi_X)$ to $(\pi_Y, \phi_Y)$ (in the
  sense of Definition~\ref{def:virt-homotopy-equiv}),
  then we
  can also use the
  pull-back property to iterate the morphism, getting a map
  $f_n \co X_n \to Y_n$. Concretely,
  \[
  f_n(y_0,x_1,y_1,\dots,x_n,y_n) = (f_0(y_0),f_1(x_1),f_0(y_1),\dots,f_1(x_n),f_0(y_n)).
  \]
\end{definition}

If $\phi$ is injective (as for surface virtual endomorphisms from
branched self-covers), $\pi \circ \phi^{-1}$ is a partially-defined
map on~$X_0$. Then $\phi^n$ is also injective and
$\pi^n\circ (\phi^n)^{-1}$, where defined, is the $n$-fold iterate
$(\pi \circ \phi^{-1})^{\circ n}$. This
justifies the term `iteration'.

\begin{definition}\label{def:energy-iterate}
  Consider a category of spaces with a structure that
  can be lifted to covers (like an elastic structure on graphs or a
  conformal structure on surfaces). Suppose that we have a non-negative
  \emph{energy} $E$ defined for suitable maps $\phi \co X \to Y$
  that is
  \emph{sub-multiplicative}, in the sense that
  \begin{equation}\label{eq:submult}
    E(\psi \circ \phi) \le E(\psi) E(\phi),
  \end{equation}
  and \emph{invariant under covers}, in the sense that if
  $\wt \phi \co \wt X \to \wt Y$
  is a cover of~$\phi$, then
  \begin{equation}\label{eq:cover-invariant}
    E(\wt \phi) = E(\phi).
  \end{equation}
  Then for $\pi, \phi \co X_1 \rightrightarrows X_0$ a virtual
  endomorphism between such spaces, where the structure
  on~$X_1$ is lifted from the structure on~$X_0$ via~$\pi$,
  the \emph{asymptotic energy} is
  \begin{equation}
    \overline{E}(\pi,\phi) \coloneqq \lim_{n \to \infty} E(\phi^n)^{1/n}.
  \end{equation}
\end{definition}

\begin{proposition}\label{prop:asympt-e}
  Let $\pi,\phi \co X_1 \rightrightarrows X_0$ be a virtual
  endomorphism and let $E$ be an energy
  that is sub-multiplicative and invariant under covers. Then
  the limit defining the asymptotic energy converges and is equal to
  the infimum of the terms. In particular,
  $\overline{E}(\pi,\phi) \le E(\phi)$.
\end{proposition}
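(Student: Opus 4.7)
The plan is to reduce to Fekete's subadditive lemma by establishing that the sequence $a_n \coloneqq E(\phi_n)$ is itself submultiplicative, i.e.\ $a_{m+n} \le a_m a_n$. Once we have this, the standard argument shows that $\lim_n a_n^{1/n}$ exists and equals $\inf_n a_n^{1/n}$, which immediately gives $\overline{E}(\pi,\phi) \le a_1^{1/1} = E(\phi)$.

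The crux is to exhibit a suitable factorization of $\phi_{m+n}$ together with the appropriate cover structure. I would first describe an explicit map $\psi_{m,n}\co X_{m+n} \to X_m$ by ``truncating'' a tuple to its first $m$ steps,
\[
\psi_{m,n}(y_0,x_1,y_1,\dots,x_{m+n},y_{m+n}) = (y_0,x_1,y_1,\dots,x_m,y_m).
\]
Directly from the definition of $\phi_n$ and $\phi_{m+n}$ as ``read off the first coordinate $y_0$,'' one gets the factorization
\[
\phi_{m+n} = \phi_m \circ \psi_{m,n}.
\]

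The key identification is that $\psi_{m,n}$ is a cover of $\phi_n$. Unpacking definitions, a point of $X_{m+n}$ is the same data as a pair $(a,b) \in X_m \times X_n$ with $\pi_m(a) = \phi_n(b)$ (using the shared middle coordinate $y_m$); under this identification $\psi_{m,n}$ is the first projection. In other words, $X_{m+n}$ is the pullback of $\phi_n \co X_n \to X_0$ along the covering map $\pi_m \co X_m \to X_0$, and $\psi_{m,n}$ is the pulled-back map. Since $\pi_m$ is a covering map (being an iterated pullback of $\pi$), the pulled-back map $\psi_{m,n}$ is a cover of $\phi_n$ in the sense of the definition preceding the proposition.

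Combining everything: by cover-invariance \eqref{eq:cover-invariant}, $E(\psi_{m,n}) = E(\phi_n)$; by submultiplicativity \eqref{eq:submult},
\[
E(\phi_{m+n}) = E(\phi_m \circ \psi_{m,n}) \le E(\phi_m)\,E(\psi_{m,n}) = E(\phi_m)\,E(\phi_n).
\]
Taking logarithms, $b_n \coloneqq \log E(\phi_n)$ (with the convention $\log 0 = -\infty$) is subadditive, so by Fekete's lemma $b_n/n$ converges to $\inf_n b_n/n$; exponentiating gives the limit/infimum statement and the bound $\overline{E}(\pi,\phi) \le E(\phi)$. The only potential subtlety is bookkeeping: one must verify that the pullback of the iterated correspondence really does produce $X_{m+n}$ on the nose (not just up to canonical identification) and that the structure on $X_n$ pulled back via $\pi_n$ from $X_0$ agrees with the structure on $X_{m+n}$ pulled back from $X_m$ via $\pi_m$, which follows because both are ultimately pulled back from $X_0$ via the same composed covering map. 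This is where the hypothesis that the structure is lifted via $\pi$ is used.
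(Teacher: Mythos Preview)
Your proof is correct and follows essentially the same approach as the paper: the paper's map $\phi_n^k \co X_n \to X_k$ is your $\psi_{k,n-k}$, and the paper likewise observes it is a cover of $\phi_{n-k}$ before invoking sub-multiplicativity and Fekete's Lemma. Your version is slightly more explicit in verifying the pullback/cover structure, which the paper leaves to ``an examination of the diagrams.''
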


\begin{proof}
  Note that $\phi^n = \phi^k \circ \phi^n_k$.
  An examination of the diagrams reveals that $\phi^n_k$ is a cover
  of $\phi^{n-k}$, so
  $E(\phi^n_k) = E(\phi^{n-k})$. We
  therefore have
  $E(\phi^n) \le E(\phi^k)E(\phi^{n-k})$ by Equation~\eqref{eq:submult}. Then
  the sequence $\log(E(\phi^n))$ is sub-additive, and Fekete's
  Lemma gives the result.%
\footnote{Fekete's Lemma: 
  if $(a_n)_{n=1}^\infty$ is sub-additive, then
  $\lim_{n \to \infty}a_n/n$ exists and is equal to the infimum of the
  terms.}
\end{proof}

If an energy $E(\phi)$ is invariant under homotopy of~$\phi$, we will
write it as $E[\phi]$.

\begin{proposition}\label{prop:asympt-homotopy}
  Let $\pi,\phi \co X_1 \rightrightarrows X_0$ be a virtual
  endomorphism and let $E$ be an energy that is sub-multiplicative,
  invariant under covers, and invariant under
  homotopy. Then $\overline{E}(\pi,\phi)$ is invariant under
  homotopy equivalence of $(\pi,\phi)$.
\end{proposition}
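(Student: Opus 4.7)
The plan is to reduce to the submultiplicativity setup of Proposition~\ref{prop:asympt-e} by iterating the homotopy equivalence. Let $(f_0, f_1)$ and $(g_0, g_1)$ be the two homotopy morphisms witnessing the equivalence between $(\pi_X, \phi_X)$ and $(\pi_Y, \phi_Y)$. The same pullback construction that produces the iterate $\phi_n$ also produces maps $f_n \co X_n \to Y_n$ and $g_n \co Y_n \to X_n$ (after choosing strict representatives where needed), satisfying
\[
\pi_{Y,n} \circ f_n = f_0 \circ \pi_{X,n}
\quad\text{and}\quad
\phi_{Y,n} \circ f_n \sim f_0 \circ \phi_{X,n},
\]
with analogous identities for~$g_n$. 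The argument of Definition~\ref{def:virt-homotopy-equiv} that $f_1 \circ g_1 \sim \id_{Y_1}$ extends verbatim to show $f_n \circ g_n \sim \id_{Y_n}$ and $g_n \circ f_n \sim \id_{X_n}$.

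The key observation I will verify is that $f_n$ is itself a cover of $f_0$, in the pullback-square sense used to define cover-invariance of~$E$. Indeed, $\pi_{X,n}$ and $\pi_{Y,n}$ are covers (iterated pullbacks of $\pi_X$ and $\pi_Y$), the square with horizontal arrows $f_n$ and $f_0$ commutes strictly, and $f_n$ is a homotopy equivalence so induces an isomorphism on $\pi_1$. A short fundamental-group calculation then shows $(\pi_{X,n})_* \pi_1(X_n) = (f_0)_*^{-1} (\pi_{Y,n})_* \pi_1(Y_n)$, so $X_n$ is isomorphic, as a cover of~$X_0$, to the pullback of $Y_n$ along~$f_0$, with $f_n$ realizing this isomorphism. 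Cover-invariance then gives $E(f_n) = E(f_0) =: C_f$, and symmetrically $E(g_n) = E(g_0) =: C_g$.

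With this in hand, repeated application of homotopy invariance and submultiplicativity yields
\[
E(\phi_{Y,n})
  = E(\phi_{Y,n} \circ f_n \circ g_n)
  = E(f_0 \circ \phi_{X,n} \circ g_n)
  \le C_f \cdot E(\phi_{X,n}) \cdot C_g.
\]
Taking $n$-th roots and letting $n \to \infty$ produces $\overline{E}(\pi_Y,\phi_Y) \le \overline{E}(\pi_X,\phi_X)$ since $(C_f C_g)^{1/n} \to 1$. Swapping the roles of $X$ and~$Y$ (and of $f$ and~$g$) gives the reverse inequality, hence equality.

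The main obstacle will be cleanly executing the cover-verification step: one must choose coherent homotopy representatives so that the iterated maps $f_n, g_n$ exist via the pullback formula, remain homotopy equivalences, and fit into genuine pullback squares rather than merely commuting ones. This is mostly bookkeeping about homotopy classes and naturality of the pullback construction, rather than a new mathematical idea.
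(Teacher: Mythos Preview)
Your proposal is correct and follows the same core idea as the paper: iterate the morphism to get $f_n \co X_n \to Y_n$, observe that $f_n$ is a cover of~$f_0$ so $E[f_n]=E[f_0]$, and then absorb the resulting constant into the $n$'th root. The paper's route is marginally shorter, however: rather than iterating~$g$ as well and invoking $f_n\circ g_n\sim\id_{Y_n}$, it composes directly with the single map~$g_0$, using the homotopy $\phi_{X,n}\sim g_0\circ\phi_{Y,n}\circ f_n$ (which follows immediately from $g_0\circ f_0\sim\id_{X_0}$ and the fact that $\phi_n$ just projects to the first coordinate). This sidesteps your extra claims that $g_n$ is a cover of~$g_0$ and that the iterated compositions are still homotopy inverses, so there is less bookkeeping about coherent representatives to worry over.
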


\begin{proof}
  Let $\sigma, \psi \co Y_1 \rightrightarrows Y_0$ be a virtual
  endomorphism homotopy equivalent to $(\pi,\phi)$, with homotopy
  equivalences given by $f_i \co X_i \to Y_i$ and
  $g_i \co Y_i \to X_i$ for $i=0,1$. We need to compare $E[\phi^n]$ and
  $E[\psi^n]$.
  Let $f_n \co X_n \to Y_n$ be the iterate of $(f_0,f_1)$. Since $f_n$ is a cover of $f_0$, we
  have $E[f_n] = E[f_0]$. Furthermore, $\phi^n$ is
  homotopic to $g_0 \circ \psi^n \circ f_n$. Then
  \begin{align*}
    E[\phi^n] &\le E[g_0] E[\psi^n] E[f_n]
                = E[\psi^n] \bigl(E[f_0] E[g_0]\bigr)\\
    E[\phi^n]^{1/n} &\le E[\psi^n]^{1/n} \bigl(E[f_0]E[g_0]\bigr)^{1/n}.\\
    \intertext{Passing to the limit on both sides, we have}
    \overline{E}(\pi,\phi) & \le \overline{E}(\sigma,\psi).
  \end{align*}
  By the same reasoning in the other direction,
  $\overline{E}(\sigma,\psi) \le \overline{E}(\pi,\phi)$.
\end{proof}

If
$E$ is sub-multiplicative, invariant under covers, and
invariant under homotopy, we will write $\oE[\pi,\phi]$ to indicate that the
asymptotic energy is independent of homotopy equivalence.

\subsection{Specific energies}
\label{sec:specific-energies}

We now turn to the specific energies of interest on elastic graphs or
conformal surfaces. There are three
relevant energies:
\begin{itemize}
\item the stretch factor $\SF$ for conformal surfaces;
\item the stretch factor $\SF$ for elastic graphs; and
\item the embedding energy $\Emb$ for elastic graphs.
\end{itemize}
The last two are equal by Theorem~\ref{thm:emb-sf}, although we
sometimes distinguish when we need to use that theorem.
All three are invariant under homotopy by definition.

For $\phi$ a map between elastic graphs, the embedding
energy $\Emb[\phi]$ is sub-multiplicative by \cite[Proposition
\ref*{Elast:prop:sub-mult}]{Thurston19:Elastic}.
\begin{lemma}
  Let $\phi \co X \to Y$ and $\psi \co Y \to Z$ be either topological
  embeddings of conformal surfaces or maps between elastic
  graphs. Then stretch factor is sub-multiplicative:
  \[
  \SF[\psi \circ \phi] \le \SF[\phi] \SF[\psi].
  \]
\end{lemma}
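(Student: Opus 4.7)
The plan is to chase definitions. Both stretch factors are defined as suprema of ratios of extremal lengths over suitable test multi-curves, so I would fix a test multi-curve $c \co C \to X$ on the source and decompose the overall ratio into two pieces, one controlled by $\SF[\phi]$ and the other by $\SF[\psi]$.

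Concretely, for any non-trivial multi-curve $c$ on $X$ (simple, in the surface case; general, in the graph case), consider $\phi \circ c$ on $Y$. If $\EL_Y[\phi \circ c] \ne 0$, write
\[
\frac{\EL_Z[\psi\circ\phi\circ c]}{\EL_X[c]}
  = \frac{\EL_Z[\psi\circ\phi\circ c]}{\EL_Y[\phi\circ c]}
    \cdot \frac{\EL_Y[\phi\circ c]}{\EL_X[c]}.
\]
The first factor is bounded by $\SF[\psi]$, provided $[\phi\circ c]$ is an allowable test class on $Y$; the second factor is bounded by $\SF[\phi]$ directly from Definitions~\ref{def:sf-surf} and~\ref{def:sf-graph}. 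Taking the supremum over $c$ yields the claimed inequality.

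I need to check two small points. First, the degenerate case: if $\EL_Y[\phi\circ c] = 0$, then $\phi\circ c$ is homotopically trivial, so $\psi\circ\phi\circ c$ is also homotopically trivial and $\EL_Z[\psi\circ\phi\circ c] = 0$, making the ratio zero and the inequality automatic. Second, the admissibility of $\phi\circ c$ as a test class for $\SF[\psi]$: in the surface case $\phi$ is a topological embedding, hence takes a simple multi-curve on $X$ to a simple multi-curve on $Y$ (this is the reason Definition~\ref{def:sf-surf} insists on embeddings), so $\phi\circ c$ is a legitimate test curve for $\SF[\psi]$; in the graph case the supremum in Definition~\ref{def:sf-graph} runs over all multi-curves and so no admissibility check is needed.

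There is no real obstacle here — the statement is a formal consequence of the supremum-of-ratios definition of stretch factor, exactly analogous to the well-known submultiplicativity of Lipschitz constants. The only slight care is to match the class of test curves used on source and target in the surface case, which is handled by the fact that $\phi$ and $\psi$ are assumed to be embeddings in that setting.
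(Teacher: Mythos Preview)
Your proposal is correct and follows essentially the same argument as the paper's own proof: decompose the ratio for a test multi-curve into two factors bounded by $\SF[\phi]$ and $\SF[\psi]$, handle the degenerate case $\EL_Y[\phi\circ c]=0$ separately, and take the supremum. You are in fact slightly more explicit than the paper about why $\phi\circ c$ is an admissible test class for $\SF[\psi]$ in the surface case.
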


\begin{proof}
  In either case, the stretch factor is a supremum over
  multi-curves (simple multi-curves for maps between surfaces). For
  $c$ any suitable multi-curve on~$X$, if $\EL_Y[\phi\circ c] \ne 0$ we have
  \[
  \frac{\EL_Z[\psi \circ \phi \circ c]}{\EL_X[c]}
    = \frac{\EL_Z[\psi\circ\phi\circ c]}{\EL_Y[\phi\circ c]}
    \frac{\EL_Y[\phi \circ c]}{\EL_X[c]}
    \le \SF[\psi] \SF[\phi].
  \]
  If $\EL_Y[\phi \circ c] = 0$, then also
  $\EL_Z[\psi\circ\phi\circ c] = 0$ and we get the same inequality. Since
  $\SF[\psi \circ \phi]$ is the supremum of the left-hand side over all $c$, we get the
  desired result.
\end{proof}

\begin{proposition}\label{prop:sf-cover-graph}
  Stretch factor for maps between elastic graphs is invariant under covers.
\end{proposition}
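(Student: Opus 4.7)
The plan is to invoke Theorem~\ref{thm:emb-sf} to translate $\SF$ into embedding energy, where invariance under covers is essentially built into the definition. Since both $\SF$ and $\Emb$ are homotopy invariants and agree on homotopy classes, it suffices to establish two inequalities.

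For the inequality $\SF[\wt\phi]\ge\SF[\phi]$, I would use the definition of stretch factor directly. Given any multi-curve $c\co C\to G_1$, form its pullback $\wt c=\pi_1^{-1}(c)\co \pi_1^{-1}(C)\to \wt G_1$, which is a $d$-fold cover of $c$, where $d$ is the degree of $\pi_1$. Because the elastic structure on $\wt G_1$ is pulled back from $G_1$, the extremal length formula \eqref{eq:EL-sum} gives $\EL_{\wt G_1}[\wt c]=d\cdot \EL_{G_1}[c]$. By commutativity of the pullback square, $\wt\phi\circ \wt c=\pi_2^{-1}(\phi\circ c)$, so likewise $\EL_{\wt G_2}[\wt\phi\circ \wt c]=d\cdot\EL_{G_2}[\phi\circ c]$. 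The ratio of extremal lengths is therefore preserved, and taking the supremum over $c$ yields the desired bound.

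For the reverse inequality $\SF[\wt\phi]\le\SF[\phi]$, it is cleaner to work with embedding energy. Choose a PL representative $\psi\in[\phi]$ achieving $\Emb[\phi]$ (existence being part of Theorem~\ref{thm:emb-sf}), and let $\wt\psi\co \wt G_1\to\wt G_2$ be its cover obtained from the pullback. Then $\wt\psi\in[\wt\phi]$, so $\Emb[\wt\phi]\le\Emb(\wt\psi)$. Now for each $\wt y\in\wt G_2$, the pullback property puts $\wt\psi^{-1}(\wt y)$ in bijection with $\psi^{-1}(\pi_2(\wt y))$ via $\pi_1$, and because $\pi_1,\pi_2$ are local isometries of the elastic structures we have $|\wt\psi'(\wt x)|=|\psi'(\pi_1(\wt x))|$. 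Hence
\[
\sum_{\wt x\in \wt\psi^{-1}(\wt y)}|\wt\psi'(\wt x)|\;=\;\sum_{x\in \psi^{-1}(\pi_2(\wt y))}|\psi'(x)|,
\]
and since the right-hand side depends only on $\pi_2(\wt y)$ while $\pi_2$ is surjective, the essential suprema coincide: $\Emb(\wt\psi)=\Emb(\psi)=\Emb[\phi]$. Combining with Theorem~\ref{thm:emb-sf} gives $\SF[\wt\phi]=\Emb[\wt\phi]\le\Emb[\phi]=\SF[\phi]$.

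The two inequalities together give the proposition. The only subtle point, and what I expect to be the main thing to handle carefully, is the distinction between embedding energy of a specific PL map and that of a homotopy class; this is resolved by working with an energy-minimizing representative on the downstairs side and covering it, rather than trying to push representatives on $\wt G_1$ down to $G_1$ (which need not be well-defined even up to homotopy).
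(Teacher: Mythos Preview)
Your proof is correct and follows essentially the same approach as the paper: the inequality $\SF[\wt\phi]\ge\SF[\phi]$ via pullback of multi-curves, and the reverse inequality by lifting an $\Emb$-minimizer through Theorem~\ref{thm:emb-sf}. Your version is actually more detailed in justifying $\Emb(\wt\psi)=\Emb(\psi)$, which the paper asserts without elaboration.
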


\begin{proof}
  Let $\phi \co G \to H$ be a map between elastic graphs and let
  $\wt \phi \co \wt G \to \wt H$ be a cover of~$\phi$ of degree~$d$. First note
  that we can pull-back a multi-curve $(C,c)$ on~$G$ to a multi-curve $(\wt C,\wt c)$
  on~$\wt G$, with $\EL[\wt c] = d \EL[c]$ and
  $\EL[\wt \phi \circ \wt c] = d \EL[\phi \circ c]$. It follows that
  $\SF[\wt \phi] \ge \SF[\phi]$.

  For the other inequality, we use Theorem~\ref{thm:emb-sf}. Let
  $\psi \in [\phi]$ be a map with $\Emb(\psi) = \Emb[\phi]$, and let
  $\wt \psi$ be the corresponding lift. Then
  $\Emb[\wt \phi] \le \Emb(\wt \psi) = \Emb(\psi) = \Emb[\phi]$.
\end{proof}

For graphs, embedding energy/stretch factor fits nicely into
the general
theory laid out in Section~\ref{sec:general-theory}.
For
surfaces, $\SF$ is not invariant under covers
\cite[Example~\ref*{Emb:examp:cover}]{KPT15:EmbeddingEL}.
We therefore modify the definition.

\begin{definition}
  For $\phi \co R \hookrightarrow S$ a topological
  embedding of conformal surfaces, the \emph{lifted stretch factor}
  $\wt{\SF}[\phi]$ is
  \[
  \wt{\SF}[\phi] \coloneqq \sup_{\substack{\text{$\wt \phi$ finite}\\\text{cover of $\phi$}}} \SF[\wt \phi].
  \]
\end{definition}

\begin{citethm}[Kahn-Pilgrim-Thurston {\cite[Theorem \ref*{Emb:thm:sf-cover}]{KPT15:EmbeddingEL}}]
  \label{thm:sf-cover}
  Let $\phi \co R \hookrightarrow S$ be a topological
  embedding of Riemann surfaces. If $\SF[\phi] \ge 1$, then
  $\wt\SF[\phi] = \SF[\phi]$. If $\SF[\phi] < 1$, then
  \[
  \SF[\phi] \le \wt\SF[\phi] < 1.
  \]
\end{citethm}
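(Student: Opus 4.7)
The plan is to prove the two inequalities separately. The trivial direction $\SF[\phi] \le \wt\SF[\phi]$ comes from taking the degree-one cover in the definition of $\wt\SF[\phi]$, so the substance lies in bounding $\wt\SF[\phi]$ from above, which I handle differently in each regime.

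In the subunitary regime $\SF[\phi] < 1$, Theorem~\ref{thm:emb-surf} provides a conformal embedding $\phi' \co R' \hookrightarrow S$ extending $\phi$, where $R'$ is $R$ with positive-modulus collars attached to each boundary component. For any finite cover $\wt\phi \co \wt R \hookrightarrow \wt S$, pulling back $\phi'(R')$ via $\pi_S$ yields an annular extension of $\wt R$ inside $\wt S$, so $\wt\phi$ is itself an annular conformal embedding, and Theorem~\ref{thm:emb-surf} gives $\SF[\wt\phi] < 1$ pointwise. The harder part is the uniform bound $\wt\SF[\phi] < 1$; the plan is to derive it quantitatively from the infimum definition~\eqref{eq:el-inf}: the total collar-modulus budget of $R' \setminus R$ is preserved in an aggregate sense under finite covers (even though individual collar moduli can shrink when the cover is in the core direction), and this preserved budget forces a definite gap between $\EL_{\wt R}[\wt c]$ and $\EL_{\wt S}[\wt\phi \circ \wt c]$ uniformly over all simple multi-curves $\wt c$.

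In the superunitary regime $\SF[\phi] \ge 1$, the plan is to prove $\SF[\wt\phi] \le \SF[\phi]$ exactly, for each finite cover. Given a simple multi-curve $\wt c$ on $\wt R$, project it to the (generally non-simple) multi-curve $c = (\pi_R)_*\wt c$ on $R$. Using the test-metric definition~\eqref{eq:el-test-sup} with pulled-back metrics gives
\[
\EL_R[c] \le d \cdot \EL_{\wt R}[\wt c], \qquad \EL_S[\phi \circ c] \le d \cdot \EL_{\wt S}[\wt\phi \circ \wt c],
\]
where $d$ is the cover degree; indeed, pulling back a test metric $\rho$ preserves the length of the projected curve while multiplying area by $d$. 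The key step is then to invoke the fact (essentially Jenkins--Strebel theory, and the same observation referenced at the end of Section~\ref{sec:thicken}) that the general and simple stretch factors of $\phi$ coincide in this regime: the extremal quadratic differential on $S$ has horizontal foliation supported on a simple multi-curve realizing the supremum. In this regime the extremal metric on $S$ for $\phi \circ c$ pulls back to a deck-invariant extremal metric on $\wt S$ for $\wt\phi \circ \wt c$, so both inequalities above become equalities, giving $\SF[\wt\phi] = \EL_{\wt S}[\wt\phi \circ \wt c]/\EL_{\wt R}[\wt c] = \EL_S[\phi \circ c]/\EL_R[c] \le \SF[\phi]$.

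The hardest step is expected to be the uniform constant in the subunitary case: obtaining $\wt\SF[\phi] < 1$, rather than merely a pointwise $\SF[\wt\phi] < 1$ for each individual cover, requires quantitative modulus control under arbitrary finite covers of possibly large degree. I expect this to be the technical heart of the argument, handled either by a careful collar-modulus accounting in the annular definition of extremal length, or by a quasiconformal extension argument that lifts to covers without degrading quantitatively.
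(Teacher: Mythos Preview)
Note first that the paper does not prove this theorem; it is quoted from \cite{KPT15:EmbeddingEL} and used as a black box, so there is no in-paper argument to compare against.

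Your superunitary argument has a genuine gap. The two inequalities you derive, $\EL_R[c] \le d\cdot\EL_{\wt R}[\wt c]$ and $\EL_S[\phi\circ c] \le d\cdot\EL_{\wt S}[\wt\phi\circ\wt c]$, both come from pulling back base metrics and both point the same way; dividing them gives no control on the ratio upstairs. To get what you want you would need an \emph{upper} bound on $\EL_{\wt S}[\wt\phi\circ\wt c]$, and your claimed equality---that the extremal metric on $S$ for the non-simple curve $\phi\circ c$ pulls back to the extremal metric on $\wt S$ for the simple curve $\wt\phi\circ\wt c$---is neither justified nor generally true: the Jenkins--Strebel metric for $\wt\phi\circ\wt c$ on $\wt S$ has no reason to be deck-invariant, since $\wt c$ itself is not deck-invariant and no averaging is available. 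The argument one actually needs here goes through the extremal Teichm\"uller-type embedding (Ioffe's theorem): when $\SF[\phi]\ge 1$ there is a quasiconformal representative in $[\phi]$ that is a Teichm\"uller map with dilatation $\SF[\phi]$, and this map together with its terminal quadratic differential lifts to every finite cover and remains extremal there.

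In the subunitary case your pointwise step is fine (an annular conformal embedding lifts to an annular conformal embedding, so each $\SF[\wt\phi]<1$ by Theorem~\ref{thm:emb-surf}), but the ``collar-modulus accounting'' you sketch for the uniform bound does not work as stated: if a boundary circle of $\wt R$ covers a boundary circle of $R$ with degree $k$, the lifted collar has modulus divided by $k$, so the collar budget genuinely degenerates under high-degree covers. Getting $\sup_{\wt\phi}\SF[\wt\phi]<1$ requires a quantitative argument that is uniform in the cover; in \cite{KPT15:EmbeddingEL} this again comes from the quadratic-differential side rather than from raw modulus estimates on collars.
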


\begin{lemma}
  $\wt \SF$ is sub-multiplicative.
\end{lemma}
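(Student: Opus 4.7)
The plan is to reduce sub-multiplicativity of $\wt\SF$ to the ordinary sub-multiplicativity of $\SF$ proved in the preceding lemma, by showing that any single finite cover of a composition of embeddings decomposes compatibly into finite covers of the two factors.

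Concretely, let $\phi \co R \hookrightarrow S$ and $\psi \co S \hookrightarrow T$ be topological embeddings of Riemann surfaces and set $\chi \coloneqq \psi \circ \phi$. By the definition of ``cover of a map'' in Section~\ref{sec:general-theory}, any finite cover $\wt\chi$ of $\chi$ is the pullback of a finite covering map $\pi_T \co \wt T \to T$. The key observation I would exploit is that this same $\pi_T$ induces finite covers of both factors simultaneously: take $\wt S \coloneqq \pi_T^{-1}(S)$ and $\wt R \coloneqq \pi_T^{-1}(R)$, so that the restrictions $\pi_T|_{\wt S}$ and $\pi_T|_{\wt R}$ are finite covering maps. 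The inclusions $\wt\phi \co \wt R \hookrightarrow \wt S$ and $\wt\psi \co \wt S \hookrightarrow \wt T$ are then finite covers of $\phi$ and $\psi$ respectively, and $\wt\chi = \wt\psi \circ \wt\phi$.

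Given this decomposition, I would apply the preceding sub-multiplicativity lemma for $\SF$ of embeddings to obtain
\[
\SF[\wt\chi] = \SF[\wt\psi \circ \wt\phi] \le \SF[\wt\phi]\cdot \SF[\wt\psi] \le \wt\SF[\phi]\cdot \wt\SF[\psi],
\]
and then take the supremum over all finite covers $\pi_T$ of $T$ on the left-hand side to conclude $\wt\SF[\chi] \le \wt\SF[\phi]\cdot \wt\SF[\psi]$.

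There is no substantial obstacle; the whole argument is a routine unwinding of the pullback definition. The only point requiring a line of care is that $\wt R$, $\wt S$, $\wt T$ need not be connected, but $\SF$ is defined via a supremum over multi-curves and so makes sense on disconnected surfaces without modification, and the lemma on ordinary sub-multiplicativity is stated in that generality. The statement of the lemma only addresses compositions of two embeddings, which is exactly the form we need, so no induction is required.
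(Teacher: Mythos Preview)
Your proposal is correct and is exactly the approach the paper takes: its entire proof is the single sentence ``Any cover of a composition factors as a composition of covers,'' and you have written out that factorization explicitly using the pullback description of covers. The only difference is level of detail; your treatment of disconnectedness is a reasonable addition but not something the paper spells out.
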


\begin{proof}
  Any cover of a composition $\phi \circ \psi$ factors as a
  composition $\wt \phi \circ \wt \psi$ of covers of $\phi$ and~$\psi$.
\end{proof}

\begin{lemma}
  $\wt \SF$ is invariant under covers.
\end{lemma}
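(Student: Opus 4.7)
The plan is to prove the two inequalities $\wt\SF[\wt\phi] \le \wt\SF[\phi]$ and $\wt\SF[\wt\phi] \ge \wt\SF[\phi]$ separately. The first is formal: the cover $\wt\phi \co \wt R \hookrightarrow \wt S$ arises from some finite cover $\wt S \to S$, and any finite cover of $\wt\phi$ likewise arises from a cover of $\wt S$, which composes with $\wt S \to S$ to give a cover of $S$. The corresponding pullback is simultaneously the given cover of $\wt\phi$ and a cover of $\phi$. Hence the set of finite covers of $\wt\phi$ is a subclass of the finite covers of $\phi$, and the supremum defining $\wt\SF[\wt\phi]$ is bounded by that defining $\wt\SF[\phi]$.

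For the reverse inequality, the idea is a fiber product. Given any finite cover $\phi' \co R' \hookrightarrow S'$ of $\phi$, obtained from $\sigma \co S' \to S$, set $\wt S' \coloneqq S' \times_S \wt S$, with projections $\rho' \co \wt S' \to S'$ and $\sigma' \co \wt S' \to \wt S$ of degrees equal to those of $\wt S \to S$ and of $\sigma$ respectively. Let $\wt\phi' \co \wt R' \hookrightarrow \wt S'$ be the inclusion of the preimage of $R$. Then $\wt\phi'$ is \emph{both} a finite cover of $\wt\phi$ (via $\sigma'$) and a finite cover of $\phi'$ (via $\rho'$), so $\SF[\wt\phi'] \le \wt\SF[\wt\phi]$.

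The remaining point is $\SF[\wt\phi'] \ge \SF[\phi']$. If $d$ is the degree of $\rho'$, then for any simple multi-curve $c$ on $R'$ its preimage $\wt c \coloneqq (\rho')^{-1}(c)$ is a simple multi-curve on $\wt R'$, and $\wt\phi' \circ \wt c$ is the preimage under $\sigma'$ of $\phi' \circ c$. Extremal length scales by the degree under finite covers of Riemann surfaces, so $\EL[\wt c] = d\,\EL[c]$ and $\EL[\wt\phi'\circ\wt c] = d\,\EL[\phi'\circ c]$, making the two ratios equal. Restricting the supremum defining $\SF[\wt\phi']$ to multi-curves of this form already recovers $\SF[\phi']$, establishing $\SF[\phi'] \le \SF[\wt\phi'] \le \wt\SF[\wt\phi]$; taking the supremum over $\phi'$ concludes the argument.

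The main subtlety is the tension that motivated $\wt\SF$ in the first place: ordinary $\SF$ is \emph{not} cover-invariant on surfaces (Theorem~\ref{thm:sf-cover}), so one cannot hope to simply lift curves and get equality of all the stretch factors in sight. The fiber-product construction sidesteps this because $\wt\phi'$ \emph{simultaneously} covers both $\wt\phi$ and $\phi'$, and because although lifting may enlarge $\SF$ by introducing non-lifted curves on $\wt R'$, this can only help the inequality $\SF[\phi'] \le \SF[\wt\phi']$ that we need.
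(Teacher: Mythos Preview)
Your proof is correct and takes the same route as the paper's one-line argument (``any two finite covers of a map have a common finite cover''): your fiber product \emph{is} that common cover, and you helpfully spell out the monotonicity $\SF[\phi'] \le \SF[\wt\phi']$ via pulling back curves and degree-scaling of extremal length, which the paper leaves implicit. One trivial slip: where you write ``the preimage under $\sigma'$ of $\phi' \circ c$'' you mean $\rho'$, consistent with your choice $d = \deg\rho'$.
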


\begin{proof}
  Any two finite covers of a
  map have a common finite cover.
\end{proof}

Let us recap what we have so far. For
$\pi_G,\phi_G \co G_1 \rightrightarrows G_0$ an virtual
endomorphism of elastic graphs, we have an asymptotic energy
$\ASF[\pi_G,\phi_G]$, invariant under homotopy equivalence. In
particular, $\ASF$ is independent of the elastic structure on~$G_0$,
since the identity is a homotopy equivalence..

For $\pi_S, \phi_S \co S_1 \rightrightarrows S_0$ a virtual
endomorphism of conformal surfaces, we have an asymptotic energy
$\overline{\wt{\SF}}[\pi_S,\phi_S]$, which we will also write
$\ASF[\pi_S,\phi_S]$. (See Corollary~\ref{cor:asf-lim} below.) This is
invariant under quasi-conformal
homotopy equivalences and therefore is independent of the conformal
structure, as long as we don't change a puncture to a boundary
component or vice versa.

In particular, if $\pi, \phi \co G_1 \rightrightarrows G_0$ is a
ribbon virtual endomorphism of elastic ribbon graphs, then 
the asymptotic energy of the induced virtual endomorphism
$N_\eps\pi, N_\eps\phi \co N_\eps G_1 \rightrightarrows N_\eps G_0$ is
independent of~$\eps$. Thus we will drop $\eps$ from the notation.

\begin{proposition}\label{prop:asf-graph-surface}
  Let $\pi,\phi \co \Gamma_1 \rightrightarrows \Gamma_0$ be a ribbon graph
  virtual endomorphism. Then
  \[
  \ASF[\pi,\phi] = \ASF[N\pi,N\phi] =
    \lim_{n \to \infty} \sqrt[n]{\SF[N\phi^n]}.
  \]
\end{proposition}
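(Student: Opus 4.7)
The strategy is to apply Theorem~\ref{thm:embedding-thickening} uniformly to every iterate $\phi_n \co G_n \to G_0$ and then pass to the limit. The crucial observation that makes the constant in Theorem~\ref{thm:embedding-thickening} uniform is that the minimum edge length $m$ in the pulled-back elastic structure on $G_n$ equals the minimum edge length in $G_0$: each edge of $G_n$ maps homeomorphically onto an edge of $G_0$ under the covering $\pi_n$, and the elastic structure is pulled back edge by edge. One also needs to identify $(N\Gamma)_n$, the $n$-th orbit space of the surface virtual endomorphism $(N\pi, N\phi)$, with $N(\Gamma_n)$; this identification holds because ribbon thickening is a local operation that commutes with the pullback construction along covering maps.

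Granting these identifications, for any fixed $0 < \eps < m/2$ Theorem~\ref{thm:embedding-thickening} applied to $\phi_n \co G_n \to G_0$ gives
\[
\frac{\SF[\phi_n]}{1 + 8\eps/m} \;\le\; \SF[N_\eps \phi_n] \;\le\; \SF[\phi_n] \cdot (1 + 8\eps/m).
\]
Taking $n$-th roots and using $(1 + 8\eps/m)^{1/n} \to 1$ then yields $\lim_n \sqrt[n]{\SF[\phi_n]} = \lim_n \sqrt[n]{\SF[N_\eps \phi_n]}$, whose left-hand side is $\ASF[\pi, \phi]$ by definition. Since the thickened asymptotic quantity is independent of $\eps$ (as noted just before the proposition), this establishes the last equality in the proposition.

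For the remaining equality with $\ASF[N\pi, N\phi] = \lim_n \sqrt[n]{\wt\SF[N\phi_n]}$, the key idea is that every finite cover of $N\Gamma_n$ arises from a finite cover of $\Gamma_n$: since $N\Gamma_n$ deformation retracts onto $\Gamma_n$, their fundamental groups agree and finite covers are in canonical bijection. Pulling back the elastic structure through such a cover preserves the minimum edge length $m$, so Theorem~\ref{thm:embedding-thickening} applies with the same constant. Combined with the cover-invariance of graph stretch factor (Proposition~\ref{prop:sf-cover-graph}), this yields
\[
\wt\SF[N_\eps \phi_n] \;=\; \sup_{\wt{\phi_n}} \SF[N_\eps \wt{\phi_n}] \;\le\; \sup_{\wt{\phi_n}} \SF[\wt{\phi_n}] \cdot (1 + 8\eps/m) \;=\; \SF[\phi_n] \cdot (1 + 8\eps/m),
\]
and together with the trivial lower bound $\wt\SF[N_\eps \phi_n] \ge \SF[N_\eps \phi_n] \ge \SF[\phi_n]/(1+8\eps/m)$, the sequences $\sqrt[n]{\wt\SF[N\phi_n]}$ and $\sqrt[n]{\SF[N\phi_n]}$ have the same limit as $\sqrt[n]{\SF[\phi_n]}$.

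The main obstacle will be the bijection between finite covers of $N\Gamma_n$ and finite covers of $\Gamma_n$, together with the verification that, under this bijection, the elastic and ribbon structures cohere so that Theorem~\ref{thm:embedding-thickening} applies with a uniform constant $8\eps/m$ across every cover and every iterate. Both points are essentially formal consequences of the local nature of ribbon thickening and of the fact that pullback along a covering preserves edge-wise elastic lengths, but they should be spelled out explicitly.
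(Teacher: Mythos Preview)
Your approach is essentially the same as the paper's: apply Theorem~\ref{thm:embedding-thickening} uniformly to every iterate~$\phi_n$ (and to every finite cover thereof), observe that the multiplicative constant $1+8\eps/m$ is independent of~$n$, and let it disappear after taking $n$-th roots. Your treatment of the lifted stretch factor via the bijection between covers of $N\Gamma_n$ and covers of $\Gamma_n$ is exactly what the paper means when it says ``the estimates depend only on the local geometry.''

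There is one genuine gap. Theorem~\ref{thm:embedding-thickening} is stated only for elastic ribbon graphs with \emph{trivalent} vertices, whereas the proposition is stated for an arbitrary ribbon graph virtual endomorphism. You invoke the theorem on $\phi_n \co G_n \to G_0$ without arranging this. The paper fixes this at the outset by applying Proposition~\ref{prop:asympt-homotopy}: both $\ASF[\pi,\phi]$ and $\ASF[N\pi,N\phi]$ are invariant under (ribbon) homotopy equivalence, so one may first replace $(\pi,\phi)$ by a ribbon-homotopy-equivalent virtual endomorphism in which $\Gamma_0$ (and hence every~$\Gamma_n$) is trivalent, and only then choose the elastic structure and~$\eps$. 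Once you add that reduction, your argument goes through exactly as written.
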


\begin{proof}
  By Proposition~\ref{prop:asympt-homotopy}, we can replace the
  virtual endomorphism with a ribbon homotopy equivalent one without
  changing $\ASF$. So we may assume that $G_0$
  and~$G_1$ are
  trivalent, with some minimum elasticity~$m$ on any edge. Pick
  $\eps < m/2$.

  Theorem~\ref{thm:embedding-thickening} says that $\SF[N_\eps\phi^n]$ is
  within a factor of $1+8\eps/m$ of $\SF[\phi^n]$ for
  all~$n$. Similarly, since $\SF$ for graphs is invariant under covers
  and the estimates depend only on the local geometry,
  $\wt{\SF}[N_\eps\phi^n]$ is within a factor of
  $1 + 8\eps/m$ of $\wt\SF[\phi^n]$. When we take the
  $n$'th root in limit for
  the three terms in the statement, this factor disappears, as
  in Proposition~\ref{prop:asympt-homotopy}.
\end{proof}

\begin{corollary}\label{cor:asf-lim}
  For any virtual surface endomorphism
  $\pi,\phi \co S_1 \rightrightarrows S_0$ where $S_0$ and~$S_1$ have
  no punctures,
  \[
  \ASF[\pi,\phi] = \lim_{n \to \infty} \sqrt[n]{\SF[\phi^n]}.
  \]
\end{corollary}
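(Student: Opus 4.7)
The plan is to reduce to Proposition~\ref{prop:asf-graph-surface}, which already supplies the stated equality for virtual endomorphisms of the form $(N\pi_G, N\phi_G)$. The easy inequality $\limsup_{n\to\infty}\sqrt[n]{\SF[\phi_n]} \le \ASF[\pi,\phi]$ is immediate from $\SF[\phi_n] \le \wt\SF[\phi_n]$ (by definition of $\wt\SF$) together with $\ASF[\pi,\phi] = \lim_n \sqrt[n]{\wt\SF[\phi_n]}$.

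For the reverse inequality, I would pick a ribbon elastic graph spine $G_0$ for $S_0$ (which exists since $S_0$ has no punctures) and set $G_1 = \pi^{-1}(G_0)$ with its pulled-back ribbon and elastic structure. This gives a ribbon graph virtual endomorphism $(\pi_G,\phi_G) \co G_1 \rightrightarrows G_0$ whose thickening has the same underlying topology as $(\pi,\phi)$. Fix a smooth $K$-quasi-conformal homeomorphism $\psi_0 \co S_0 \to NG_0$ matching the two spines. Because the conformal structures on both $S_n$ and $NG_n$ are pulled back from level zero via the covering maps $\pi_n$ and $N\pi_{G,n}$, the universal property of the pullback lifts $\psi_0$ to $K$-quasi-conformal homeomorphisms $\psi_n \co S_n \to NG_n$ with the \emph{same} dilatation constant $K$ for every $n$.

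The standard distortion of extremal length under $K$-quasi-conformal maps (applied to simple multi-curves, which is what $\SF$ for surface maps uses) then yields
\[
K^{-2}\,\SF[N\phi_{G,n}] \;\le\; \SF[\phi_n] \;\le\; K^2\,\SF[N\phi_{G,n}].
\]
Taking $n$-th roots, the factors $K^{\pm 2/n}$ tend to~$1$, and hence
\[
\lim_{n\to\infty}\sqrt[n]{\SF[\phi_n]}
 \;=\; \lim_{n\to\infty}\sqrt[n]{\SF[N\phi_{G,n}]}
 \;=\; \ASF[N\pi_G, N\phi_G]
 \;=\; \ASF[\pi,\phi],
\]
where the second equality is Proposition~\ref{prop:asf-graph-surface} and the third is the quasi-conformal invariance of $\ASF$ noted just before the corollary.

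The main technical point is verifying that the dilatation of $\psi_n$ stays uniformly bounded by $K$ in $n$; this holds because all conformal structures at level $n$ are covering pullbacks, so $\psi_n$ is locally a copy of $\psi_0$ and carries the same pointwise dilatation. A minor subtlety is that $\SF$ on the surface side is defined using \emph{simple} multi-curves, so the relevant extremal length distortion bound is the classical one for simple multi-curves rather than the general (non-simple) case, for which the behavior under quasi-conformal maps is less well understood (compare the open question at the end of Section~\ref{sec:thicken}).
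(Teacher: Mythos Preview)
Your approach for surfaces with boundary is essentially the paper's, but you make the quasi-conformal transfer explicit, which is helpful: the paper simply says ``use the independence of asymptotic energy on the conformal structure to replace $S_0$ by $N_\eps G_0$'' and appeals to Proposition~\ref{prop:asf-graph-surface}, leaving the reader to unpack why $\lim_n \sqrt[n]{\SF[\phi_n]}$ (as opposed to $\ASF$, which is defined via $\wt\SF$) is also unchanged by the change of conformal structure. Your argument with the uniformly bounded dilatation of the lifts $\psi_n$ is exactly the mechanism behind Proposition~\ref{prop:asympt-homotopy}, applied now to the non-cover-invariant energy $\SF$; the point that $\psi_n$ is a cover of~$\psi_0$ (hence has the same pointwise dilatation) is the key observation.

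There is, however, a genuine gap: you do not handle the case where $S_0$ is closed. Your parenthetical ``which exists since $S_0$ has no punctures'' is not a valid justification for the existence of a spine---a closed surface has no punctures and no graph spine either. The paper treats this case separately and briefly: for closed surfaces there is never a non-trivial (annular) conformal embedding, so $\SF[\phi_n] \ge 1$ for every~$n$; Theorem~\ref{thm:sf-cover} then gives $\wt\SF[\phi_n] = \SF[\phi_n]$, and the two limits coincide trivially. You should insert this short case before reaching for a spine; once $S_0$ has nonempty boundary, your argument goes through.
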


\begin{proof}
  If $S_0$ and $S_1$ are closed surfaces, then $\SF[\phi] \ge 1$
  (since there is never an annular conformal embedding) so $\SF$
  is invariant under covers and the statement is trivial.

  Otherwise, we will see that $\lim \sqrt[n]{\SF[\phi^n]}$ is
  independent of the conformal structure on~$S_0$ (since the general
  theory of asymptotic energies does not apply), and then replace
  $S_0$ by an $\epsilon$-thickening of a graph and apply
  Proposition~\ref{prop:asf-graph-surface}.

  For that purpose, let $\pi,\phi \colon S_1 \rightrightarrows S_0$
  and $\pi',\phi'\colon S_1' \rightrightarrows S_0'$ be two virtual
  endomorphisms of conformal surfaces, and let $f_i \colon S_i \to
  S_i'$ be one direction of a homotopy equivalence between them. If
  $\SF[f_0] \ge 1$, then $\SF[f_n] = \SF[f_0]$ for all $n$ by
  the first part of Theorem~\ref{thm:sf-cover}. If $\SF[f_0] < 1$,
  then $\SF[f_n] < 1$ by the second part of
  Theorem~\ref{thm:sf-cover}. Either way, we have the inequalities we
  need to deduce that $\SF[\phi^n]$ is within a uniform constant factor of
  $\SF[\phi'{}^n]$, as in the proof of Proposition~\ref{prop:asympt-homotopy}.
\end{proof}

\subsection{Proof of Theorem~\ref{thm:detect-rational}}
\label{sec:proof-theorem}

We are now ready to prove Theorem~\ref{thm:detect-rational}. We expand
the statement to include asymptotic energies.

\begin{taggedthm}{1\/$'$}\label{thm:detect-rational-a}
  Let $f \co (\Sigma,P) \righttoleftarrow$ be a branched
  self-cover of hyperbolic type with associated surface virtual
  endomorphism $(\pi_S, \phi_S)$. Then the following
  conditions are equivalent.
  \begin{enumerate}
  \item\label{item:rational} $f$ is equivalent to a rational map;
  \item\label{item:exists-emb} there is an elastic graph spine~$G$ for
    $\Sigma\setminus P$ and an integer $n > 0$ so that
    $\Emb[\phi_G^n] < 1$;
  \item\label{item:every-emb} for every elastic graph spine~$G$ for
    $\Sigma\setminus P$ and for every sufficiently large~$n$
    (depending on $f$ and~$G$), we have $\Emb[\phi_G^n] < 1$;
  \item\label{item:ASF-surf} $\ASF[\pi_S, \phi_S] < 1$; and
  \item\label{item:ASF-graph} $\ASF[\pi_G, \phi_G] < 1$.
  \end{enumerate}
\end{taggedthm}

\begin{proof}
  Conditions~(\ref{item:ASF-surf}) and~(\ref{item:ASF-graph}) are
  equivalent by
  Proposition~\ref{prop:asf-graph-surface}. Conditions~(\ref{item:exists-emb})
  and~(\ref{item:every-emb}) are equivalent to
  Condition~(\ref{item:ASF-graph}) by
  Proposition~\ref{prop:asympt-e} and Theorem~\ref{thm:emb-sf}.

  To see that Condition~(\ref{item:rational}) implies
  Condition~(\ref{item:ASF-surf}), suppose $f$ is equivalent to a rational map. Then by
  Theorem~\ref{thm:rational-surfaces-embed}, there is a conformal
  virtual endomorphism
  $\pi_S, \phi_S \co S_1 \rightrightarrows S_0$ compatible
  with~$f$. Since $\phi_S$ is annular, by Theorem~\ref{thm:sf-cover}
  $\wt\SF[\phi_S] < 1$, so by Proposition~\ref{prop:asympt-e},
  $\ASF[\pi_S, \phi_S] < 1$.

  Conversely, to see Condition~(\ref{item:ASF-surf}) implies
  Condition~(\ref{item:rational}), suppose $\ASF[\pi_S,\phi_S] < 1$ with respect to any
  conformal structure~$S_0$. By
  Proposition~\ref{prop:asympt-e}, there is some~$n$ so that
  $\SF[\phi_S^n] \le \wt\SF[\phi_S^n] < 1$, so by Theorem~\ref{thm:emb-surf},
  $\phi_S^n$ is homotopic to an annular conformal embedding. Then by
  Theorem~\ref{thm:rational-surfaces-embed}, the $n$-fold composition
  $f^{\circ n}$ is equivalent to a rational map, which implies that $f$
  itself is equivalent to a rational map.
\end{proof}

\begin{remark}
  The last step in the proof follows from
  W.\ Thurston's Obstruction Theorem
  (Theorem~\ref{thm:thurston-obstruction} below), since an
  obstruction for~$f$
  is also an obstruction for~$f^{\circ n}$, but doesn't use the full
  strength of that theorem. It suffices, for instance, to know that
  some power of the
  pull-back map on Teichmüller space is contracting \cite[Section
  2.5]{BCT14:TeichHolo}.
\end{remark}


\section{Asymptotics of other energies}
\label{sec:other-energies}

The theory of asymptotic energies developed at the beginning of
Section~\ref{sec:iter} applies to any energy that is
sub-multiplicative and invariant under homotopy and
covers. In particular, it applies to any of the $p$-conformal energies
$E^p_p$ defined in \cite[Appendix
\ref*{Elast:sec:energies}]{Thurston19:Elastic}.
These energies $E^p_p[\phi]$ are a simultaneous generalization of best
Lipschitz constant $\Lip[\phi] = E^\infty_\infty[\phi]$,
and the embedding energy $\Emb[\phi] = \bigl(E^2_2[\phi]\bigr)^2$.

Recall that a $p$-conformal graph, for $1 < p \le \infty$, is a graph
with a $p$-length $\alpha(e)$ on each edge~$e$, which we will treat as
a metric. A $1$-conformal graph is instead a weighted graph, with a
weight $w(e)>0$ on each edge.
For $\phi \co G_1 \to G_2$ a PL map between
$p$-conformal graphs,
$E^p_p[\phi]$ is defined by
\begin{equation}\label{eq:Epp-std}
\begin{aligned}
  E^p_p(\phi) &\coloneqq
    \begin{cases}
      \displaystyle\esssup_{y \in G_2} \,\frac{n_{\phi,w}(y)}{w(y)} & p = 1\\
      \displaystyle\esssup_{y \in G_2} \,\Biggl(\sum_{x \in
              \phi^{-1}(y)} \abs{\phi'(x)}^{p-1}\Biggr)^{1/p} &
        1 < p < \infty\\
      \Lip(\phi) & p = \infty
    \end{cases}\\
  E^p_p[\phi] &\coloneqq \inf_{\psi \in [\phi]} E^p_p(\psi).
\end{aligned}
\end{equation}
(In the $p=1$ case, we count weighted preimages:
$n_{\phi,w}(y) = \sum_{x \in \phi^{-1}(y)} w(x)$.)
Like
$\Emb$, the energy $E^p_p$ is
sub-multiplicative and
invariant under covers, whether or not we take homotopy classes. For
a graph virtual endomorphism
$\pi, \phi \co \Gamma_1 \rightrightarrows \Gamma_0$, we thus have asymptotic energies
$\oE^p_p[\pi,\phi]$. By
Proposition~\ref{prop:asympt-homotopy}, $\oE^p_p$ is invariant under
homotopy equivalence.

$E^p_p[\phi]$ can be characterized as a stretch
factor for energies of maps to length graphs \cite[Theorem
\ref*{Elast:thm:energy-sf}]{Thurston19:Elastic}. For
$1 \le p \le \infty$ and $f \co G \to K$ a PL
map from a $p$-conformal graph to a length graph, there are energies
\begin{align}
  E^p_\infty(f) &\coloneqq \norm{f'}_p\label{eq:Epinf}\\
  \nonumber E^p_\infty[f] &\coloneqq \inf_{g \in [f]} E^p_\infty(g).
\end{align}
Then, for $\phi \co G_1 \to G_2$ a PL map between $p$-conformal graphs,
\begin{equation}
  E^p_p[\phi] = \sup_{[f] \co G_2 \to K} \frac{E^p_\infty[f \circ \phi]}{E^p_\infty[f]}
\label{eq:Epp-SF}
\end{equation}
where the supremum runs over all length graphs~$K$ and homotopy
classes $[f]$ of maps.

To study the behavior of $\oE^p_p$ as $p$ varies,
we compare $p$-conformal energies and $q$-conformal energies
for $p \le q$.
\begin{definition}
  For a metric graph~$G$, let
  \begin{align*}
    m(G) &\coloneqq \min_{e\in\Edges(G)} \alpha(e)\\
    M(G) &\coloneqq \sum_{e \in \Edges(G)} \alpha(e).
  \end{align*}
\end{definition}

\begin{lemma}\label{lem:qp-ineq}
  For $1 \le p \le q \le \infty$ and $f \co G \to K$ a
  constant-derivative map from a metric graph to a length graph,
  \[
    E^q_\infty(f) \le m(G)^{-\frac{1}{p}+\frac{1}{q}}E^p_\infty(f).
  \]
  The same inequality is true when we minimize over the homotopy class.
\end{lemma}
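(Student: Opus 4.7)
The plan is to exploit the constant-derivative hypothesis to reduce the inequality to a simple interpolation estimate on a finite set. Let $c_e = \abs{f'}$ on edge $e$, so that $\abs{f'}$ is the step function taking value $c_e$ on each edge of length $\alpha(e)$, and in particular
\[
E^p_\infty(f)^p = \sum_{e \in \Edges(G)} c_e^p \, \alpha(e)
\qquad \text{for } 1 \le p < \infty,
\]
with the usual modification $E^\infty_\infty(f) = \max_e c_e$.

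The first step is the ``reverse'' bound $c_\infty \coloneqq \max_e c_e \le m(G)^{-1/p} E^p_\infty(f)$: if $e^*$ achieves the maximum then
\[
E^p_\infty(f)^p \ge c_{e^*}^p \, \alpha(e^*) \ge c_\infty^p \, m(G).
\]
This is the only place the minimum edge length $m(G)$ enters, and it is exactly where the hypothesis that $G$ is metric (not measure-theoretic) is used.

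The second step is the standard log-convexity of $L^p$-norms applied to the step function $\abs{f'}$: for $1 \le p \le q < \infty$,
\[
E^q_\infty(f)^q = \sum_e c_e^q \, \alpha(e)
   \le c_\infty^{\,q-p} \sum_e c_e^p \, \alpha(e)
   = c_\infty^{\,q-p} E^p_\infty(f)^p.
\]
Taking $q$-th roots and substituting the bound from Step~1,
\[
E^q_\infty(f) \le c_\infty^{\,1-p/q} E^p_\infty(f)^{p/q}
  \le \bigl(m(G)^{-1/p} E^p_\infty(f)\bigr)^{1-p/q} E^p_\infty(f)^{p/q}
  = m(G)^{-1/p+1/q} E^p_\infty(f),
\]
which is the claimed inequality. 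For the boundary case $q = \infty$ the inequality is exactly Step~1, and for $p = 1$ the interpolation in Step~2 still applies with $E^1_\infty(f) = \sum_e c_e \alpha(e)$, so no separate argument is needed.

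There is no real obstacle: the whole content is the elementary observation that the $L^\infty$-norm of a step function over edges of length at least $m(G)$ is controlled by any $L^p$-norm, and then Lyapunov/log-convexity interpolation does the rest.
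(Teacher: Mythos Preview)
Your proof is correct. The paper takes a slightly different but equally elementary route: it starts from the unweighted inequality $\bigl(\sum_i x_i^q\bigr)^{1/q} \le \bigl(\sum_i x_i^p\bigr)^{1/p}$ and applies it to the sequence $w_i^{1/q}x_i$ (with $w_i = \alpha(e_i)$), which after bounding $w_i^{p/q-1} \le m(G)^{p/q-1}$ yields the weighted inequality in one stroke. You instead pass through the $L^\infty$ bound $c_\infty \le m(G)^{-1/p}E^p_\infty(f)$ and then interpolate $E^q_\infty \le c_\infty^{1-p/q}(E^p_\infty)^{p/q}$. The two arguments are essentially equivalent---your interpolation step is in fact one standard way to prove the unweighted $\ell^q \le \ell^p$ inequality the paper invokes---so neither buys anything over the other; the paper's version is marginally more compact, while yours makes the role of $m(G)$ and the constant-derivative hypothesis more explicit.
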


\begin{proof}
  In general, there is an inequality
  \begin{align}
    \label{eq:qp-ineq}\Bigl( \sum_i x_i{}^q \Bigr)^{1/q} \le
     \Bigl( \sum_i x_i{}^p \Bigr)^{1/p}.\\
    \intertext{With positive weights $w_i$ with $m = \min_i w_i$, this becomes}
    \label{eq:qp-w-ineq}\Bigl( \sum_i w_i x_i{}^q \Bigr)^{1/q} \le
     m^{-\frac{1}{p}+\frac{1}{q}} \cdot \Bigl( \sum_i w_i x_i{}^p \Bigr)^{1/p}.
  \end{align}
  (Apply Equation~\eqref{eq:qp-ineq} to the sequence
  $w_i{}^{1/q}x_i$.) Apply Equation~\eqref{eq:qp-w-ineq} to the
  definition of $E^p_\infty$.

  To get the statement for homotopy classes, apply this inequality to
  a map~$f$ that minimizes $E^p_\infty(f)$ in its homotopy class.
\end{proof}

\begin{lemma}\label{lem:pq-ineq}
  For $1 \le p \le q \le \infty$ and $f \co G \to K$ a PL
  map from a metric graph to a length graph,
  \[
    E^p_\infty(f) \le M(G)^{\frac{1}{p}-\frac{1}{q}}\cdot E^q_\infty(f).
  \]
  The same inequality is true when we minimize over the homotopy class.
\end{lemma}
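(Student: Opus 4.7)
The plan is to apply Hölder's inequality directly to the derivative of any PL representative, then pass to the infimum over the homotopy class. Unlike Lemma \ref{lem:qp-ineq}, which goes in the ``harder'' direction and so needs the constant-derivative hypothesis together with a factor of $m(G)$, this lemma is just the classical comparison of $L^p$ norms on a finite-measure space, so the same representative on both sides suffices.

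First I would establish the pointwise estimate
\[
E^p_\infty(g) \le M(G)^{\frac{1}{p}-\frac{1}{q}}\,E^q_\infty(g)
\]
for an arbitrary PL map $g \co G \to K$. For $1 \le p < q < \infty$, apply Hölder's inequality on $(G, d\alpha)$ to the pair $(|g'|^p,\,1)$ with conjugate exponents $q/p$ and $q/(q-p)$:
\[
\int_G |g'|^p\,d\alpha
   \le \Bigl(\int_G |g'|^q\,d\alpha\Bigr)^{p/q}
       \Bigl(\int_G 1\,d\alpha\Bigr)^{(q-p)/q}
   = E^q_\infty(g)^p \cdot M(G)^{(q-p)/q}.
\]
Taking the $p$-th root yields the desired inequality. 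The endpoint $q = \infty$ follows from the pointwise bound $|g'| \le \Lip(g)$, giving $\|g'\|_p \le \Lip(g)\,M(G)^{1/p}$, which matches the claimed exponent $1/p - 1/\infty = 1/p$; the case $p = q$ is trivial.

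To conclude, pass to the infimum over $[f]$. Since the pointwise estimate holds for every $g \in [f]$,
\[
E^p_\infty[f] = \inf_{g\in[f]} E^p_\infty(g)
   \le M(G)^{\frac{1}{p}-\frac{1}{q}} \inf_{g \in [f]} E^q_\infty(g)
   = M(G)^{\frac{1}{p}-\frac{1}{q}}\,E^q_\infty[f].
\]
There is no real obstacle in this argument; the only things to be careful about are the two endpoint exponents and the interpretation of $1/\infty = 0$ so that the constant $M(G)^{1/p - 1/q}$ degenerates correctly in the limiting cases.
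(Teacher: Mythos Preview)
Your proof is correct. The paper's own argument phrases the same computation as sub-multiplicativity through the identity map, $E^p_\infty(f) \le E^p_q(\id)\,E^q_\infty(f)$ with $E^p_q(\id) = M(G)^{(q-p)/pq}$, and then explicitly remarks that one can alternatively ``apply Hölder's inequality to Equation~\eqref{eq:Epinf}''---which is exactly what you did; so the two approaches are essentially the same.
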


\begin{proof}
  Use sub-multiplicativity of the energies \cite[Proposition \ref*{Elast:prop:energy-submult}]{Thurston19:Elastic}:
  \begin{equation*}
    E^p_\infty(f) \le E^p_q(\id) E^q_\infty(f).
  \end{equation*}
  From the definition of~$E^p_q$ \cite[Equation
  (\ref*{Elast:eq:Epq})]{Thurston19:Elastic}, we see
  that $E^p_q(\id) = M(G)^{\frac{q-p}{pq}}$. (Alternatively, apply
  Hölder's inequality to Equation~\eqref{eq:Epinf}.)

  To get the statement for homotopy classes, apply this inequality to
  a map~$f$ that minimizes $E^q_\infty(f)$ in its homotopy class.
\end{proof}

We can now see that these energies give nothing new for
ordinary
(non-virtual) graph endomorphisms (i.e., outer automorphisms of the
free group). Define the asymptotic energy of an ordinary
endomorphism~$\phi$ to be
$\oE[\phi] \coloneqq \oE[\id,\phi] = \lim_{n \to \infty}
\sqrt[n]{E[\phi^{\circ n}]}$.
\begin{proposition}\label{prop:Epp-endo}
  For $[\phi] \co \Gamma \to \Gamma$ an endomorphism of a graph,
  \[
  \oE^p_p[\phi] = \ALip[\phi].
  \]
\end{proposition}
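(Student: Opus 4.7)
The plan is to sandwich $E^p_p[\phi^{\circ n}]$ between fixed constant multiples of $\Lip[\phi^{\circ n}]$, so that taking $n$th roots as $n \to \infty$ makes the constants disappear. I will focus on the case $1 < p < \infty$; the case $p = \infty$ is immediate from $E^\infty_\infty = \Lip$.

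First I would prove the comparison, for every homotopy class $[f] \co \Gamma \to K$ from $\Gamma$ (viewed as $p$-conformal) to a length graph $K$,
\[
m(\Gamma)^{1/p}\,\Lip[f] \;\le\; E^p_\infty[f] \;\le\; M(\Gamma)^{1/p}\,\Lip[f].
\]
The upper bound is Lemma~\ref{lem:pq-ineq} with $q = \infty$. The lower bound follows by taking a PL $E^p_\infty$-minimizer $g \in [f]$, which on each edge has constant derivative; Lemma~\ref{lem:qp-ineq} with $q = \infty$ then gives $\Lip(g) \le m(\Gamma)^{-1/p}E^p_\infty(g)$, and combining this with $\Lip[f] \le \Lip(g)$ and $E^p_\infty(g) = E^p_\infty[f]$ finishes the estimate.

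Next I would apply the stretch-factor formula in Equation~\eqref{eq:Epp-SF} to $\phi^{\circ n} \co \Gamma \to \Gamma$. Substituting the sandwich gives, uniformly in $[f]$,
\[
(m/M)^{1/p}\,\frac{\Lip[f \circ \phi^{\circ n}]}{\Lip[f]}
  \;\le\; \frac{E^p_\infty[f \circ \phi^{\circ n}]}{E^p_\infty[f]}
  \;\le\; (M/m)^{1/p}\,\frac{\Lip[f \circ \phi^{\circ n}]}{\Lip[f]},
\]
where $m = m(\Gamma)$ and $M = M(\Gamma)$. Taking the supremum over $[f]$ and using the $p = \infty$ instance of Equation~\eqref{eq:Epp-SF} (which represents $\Lip[\phi^{\circ n}]$ as precisely the same type of stretch factor for maps to length graphs) yields
\[
(m/M)^{1/p}\,\Lip[\phi^{\circ n}] \;\le\; E^p_p[\phi^{\circ n}] \;\le\; (M/m)^{1/p}\,\Lip[\phi^{\circ n}].
\]
Raising to the $1/n$ power and sending $n \to \infty$, the prefactors $(m/M)^{\pm 1/p}$ tend to $1$ and the middle and outer terms have the same limit, proving $\oE^p_p[\phi] = \ALip[\phi]$.

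The main potential obstacle is confirming that an $E^p_\infty$-minimizer in the first sandwich may be taken PL on $\Gamma$ so that Lemma~\ref{lem:qp-ineq} applies cleanly; this is essentially standard in the framework of \cite{Thurston16:Elastic}. A minor additional matter is the degenerate $p = 1$ case, where $E^1_1$ is defined as $\esssup n_\phi$ rather than through a derivative; this case can be handled either by a direct preimage count or by continuity of $\oE^p_p[\phi]$ in $p$.
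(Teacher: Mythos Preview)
Your proof is correct and follows essentially the same route as the paper: use Lemmas~\ref{lem:qp-ineq} and~\ref{lem:pq-ineq} to sandwich $E^p_\infty[f]$ between fixed constant multiples of $\Lip[f]$, feed this into the stretch-factor characterization~\eqref{eq:Epp-SF}, and let the constants vanish under the $n$th root. You are simply more explicit about the constant (writing $(M/m)^{1/p}$ where the paper just says ``a constant~$C$'') and more careful about the side issues (existence of a constant-derivative minimizer, the $p=1$ endpoint), neither of which the paper addresses separately.
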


\begin{proof}
  By Lemmas~\ref{lem:qp-ineq}
  and~\ref{lem:pq-ineq} there is a constant $C \ge 1$ so that
  \[
  \frac{1}{C} \frac{\Lip[f \circ \phi^{\circ n}]}{\Lip[f]}
  \le \frac{E^p_\infty[f \circ \phi^{\circ n}]}{E^p_\infty[f]}
  \le C \frac{\Lip[f \circ \phi^{\circ n}]}{\Lip[f]}.
  \]
  Equation~\eqref{eq:Epp-SF} then shows that $E^p_p[\phi^{\circ n}]$
  is within a factor of~$C$ of $\Lip[\phi^{\circ n}]$. The constant
  factor disappears in the limit defining $\oE^p_p[\phi]$.
\end{proof}

For virtual endomorphisms, the situation is more interesting.

\begin{proposition}\label{prop:Epp-decrease}
  For $\pi, \phi \co \Gamma_1 \rightrightarrows \Gamma_0$ a virtual
  endomorphism of graphs, $\oE^p_p[\pi,\phi]$ is a non-increasing
  function of~$p$: if $1 \le p \le q \le \infty$,
  \[
  \oE^q_q[\pi,\phi] \le \oE^p_p[\pi,\phi].
  \]
\end{proposition}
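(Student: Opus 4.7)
The plan is to deduce the inequality from the two comparison lemmas already established, using the stretch-factor characterization of $E^p_p$ in Equation~\eqref{eq:Epp-SF}. Concretely, for each $n$ I will compare $E^q_q[\phi_n]$ to $E^p_p[\phi_n]$ on the nose (up to a multiplicative constant independent of~$n$), and then take $n$-th roots.

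First I fix any map $[f]\co\Gamma_0\to K$ to a length graph and compare the two ratios appearing in Equation~\eqref{eq:Epp-SF}. For the numerator, $f\circ\phi_n\co\Gamma_n\to K$, I pick a constant-derivative representative $g\in[f\circ\phi_n]$ realizing $E^p_\infty[f\circ\phi_n]$ (such a representative exists as the energy minimizer for $1<p<\infty$, and the endpoint cases $p=1,\infty$ are handled analogously via approximation); then Lemma~\ref{lem:qp-ineq} applied to $g$ gives
\[
E^q_\infty[f\circ\phi_n] \le E^q_\infty(g)
  \le m(\Gamma_n)^{-(1/p-1/q)}\,E^p_\infty(g)
  = m(\Gamma_n)^{-(1/p-1/q)}\,E^p_\infty[f\circ\phi_n].
\]
For the denominator, Lemma~\ref{lem:pq-ineq} (which is already stated at the level of homotopy classes) gives the reverse comparison
\[
E^p_\infty[f] \le M(\Gamma_0)^{1/p-1/q}\,E^q_\infty[f].
\]
Dividing these and taking the supremum over $[f]$ using Equation~\eqref{eq:Epp-SF} yields
\[
E^q_q[\phi_n] \le \left(\frac{M(\Gamma_0)}{m(\Gamma_n)}\right)^{1/p-1/q}\! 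E^p_p[\phi_n].
\]

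Next I note that the $p$-conformal structure on~$\Gamma_n$ is pulled back from $\Gamma_0$ along the covering map $\pi_n$, which is a local isometry on each edge; hence edges of $\Gamma_n$ biject with lifts of edges of $\Gamma_0$ and carry the same lengths, so $m(\Gamma_n)=m(\Gamma_0)$. Setting $C \coloneqq M(\Gamma_0)/m(\Gamma_0)$, which is a constant independent of~$n$, I obtain
\[
E^q_q[\phi_n] \le C^{1/p-1/q}\,E^p_p[\phi_n].
\]
Taking $n$-th roots and passing to the limit $n\to\infty$, the constant factor $C^{(1/p-1/q)/n}$ tends to~$1$, so $\oE^q_q[\pi,\phi] \le \oE^p_p[\pi,\phi]$.

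The main place to be careful is the use of Lemma~\ref{lem:qp-ineq}, since it is stated only for constant-derivative maps rather than for homotopy classes; this is why I invoke the minimizer $g$ before applying the inequality. The identification $m(\Gamma_n)=m(\Gamma_0)$ is straightforward but essential, since without a uniform lower bound on edge lengths the factor $m(\Gamma_n)^{-(1/p-1/q)}$ could blow up with~$n$ and the argument would collapse.
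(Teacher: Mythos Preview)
Your proof is correct and follows essentially the same approach as the paper's: bound the stretch-factor ratio in Equation~\eqref{eq:Epp-SF} using Lemmas~\ref{lem:qp-ineq} and~\ref{lem:pq-ineq}, observe $m(G_n)=m(G_0)$ so the resulting constant is independent of~$n$, and let it disappear under the $n$-th root. Your version is more explicit than the paper's about choosing a constant-derivative minimizer before applying Lemma~\ref{lem:qp-ineq} and about invoking Lemma~\ref{lem:pq-ineq} for the denominator; the paper compresses all of this into a single asserted inequality ``for some constant~$C$, since $m(G_n)=m(G_0)$.''
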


\begin{proof}
  Pick a metric structure~$G_0$ on~$\Gamma_0$ and lift it to get a
  series of metric graphs $G_n$ as usual. Then for any $f \co G_0 \to
  K$ a map to a length graph,
  \[
  \frac{E^q_\infty[f \circ \phi^n]}{E^q_\infty[f]}
    \le \frac{1}{C}\frac{E^p_\infty[f \circ \phi^n]}{E^p_\infty[f]}
  \]
  for some constant~$C$, since
  $m(G_n) = m(G_0)$.
  Now $E^q_q[\phi^n] \le E^p_p[\phi^n]/C$, and the constant factor
  disappears in the limit as usual.
\end{proof}

\begin{proposition}\label{prop:Epp-lower-bound}
  For $\pi, \phi \co \Gamma_1 \rightrightarrows \Gamma_0$ a virtual
  endomorphism of graphs with $\pi$ a covering of degree~$d$, if
  $1 \le p \le q \le \infty$,
  \[
  \oE^q_q[\pi,\phi] \ge d^{-\frac{1}{p}+\frac{1}{q}} \cdot\oE^p_p[\pi,\phi].
  \]
\end{proposition}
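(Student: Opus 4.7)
The plan is to combine the characterization of $E^p_p$ and $E^q_q$ as sup-ratios of $E^p_\infty$ and $E^q_\infty$ energies (Equation~\eqref{eq:Epp-SF}) with the quantitative comparisons between these two energies supplied by Lemmas~\ref{lem:qp-ineq} and~\ref{lem:pq-ineq}, and to extract the claimed asymptotic factor from the way the metric data of~$\Gamma_n$ scales with~$n$.

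First I would fix a metric structure on~$\Gamma_0$ and lift it along the degree-$d^n$ covering~$\pi_n$ to a metric structure on~$\Gamma_n$, as in the proof of Proposition~\ref{prop:Epp-decrease}. The key quantitative fact is that $\pi_n$ is a local isometry, so lifting preserves the minimum edge length, $m(\Gamma_n) = m(\Gamma_0)$, while the total length is multiplied by the degree, $M(\Gamma_n) = d^n M(\Gamma_0)$. This asymmetric scaling---total length growing geometrically in~$n$ while minimum edge length stays constant---is the only mechanism by which a nontrivial power of~$d$ can enter when one compares $E^p$- and $E^q$-energies across iterates, and after the $n$-th root it is what produces the factor~$d^{1/p-1/q}$ in the limit.

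For any homotopy class $[f] \co \Gamma_0 \to K$ of maps to a length graph, I would apply Lemma~\ref{lem:pq-ineq} to $f \circ \phi_n \co \Gamma_n \to K$ to relate $E^p_\infty[f \circ \phi_n]$ and $E^q_\infty[f \circ \phi_n]$, picking up a constant that is a power of $M(\Gamma_n) = d^n M(\Gamma_0)$; and apply Lemma~\ref{lem:qp-ineq} to a constant-derivative representative of~$[f]$ on~$\Gamma_0$ to relate $E^p_\infty[f]$ and $E^q_\infty[f]$, picking up a constant that is a power of $m(\Gamma_0)$. Dividing and taking the supremum over~$[f]$ via Equation~\eqref{eq:Epp-SF} yields an estimate relating $E^p_p[\phi_n]$ and $E^q_q[\phi_n]$ whose prefactor is a $\Gamma_0$-dependent constant times a power of~$d^n$. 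Taking the $n$-th root and letting $n \to \infty$, the $\Gamma_0$-dependent factor washes out (as in the proof of Proposition~\ref{prop:Epp-endo}), and only the contribution $d^{1/p-1/q}$ from the $d^n$-growth of~$M(\Gamma_n)$ survives, giving the claimed bound.

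The main technical obstacle is the mismatch between Lemma~\ref{lem:qp-ineq}, which as stated applies only to constant-derivative maps rather than homotopy classes, and Equation~\eqref{eq:Epp-SF}, which is a supremum over homotopy classes. This is handled exactly as in the proof of Proposition~\ref{prop:Epp-endo}: choose a constant-derivative representative $g \in [f]$ realizing $E^p_\infty[f]$, and combine $E^q_\infty[f] \le E^q_\infty(g)$ with Lemma~\ref{lem:qp-ineq} applied to~$g$ to obtain the desired comparison on homotopy classes; the analogous step on~$\Gamma_n$ uses Lemma~\ref{lem:pq-ineq} directly, which is already stated for homotopy classes. A minor bookkeeping point is that Lemma~\ref{lem:pq-ineq} requires only the global quantity $M(\Gamma_n)$ and not per-edge information, so there is no compounding of local constants as the iteration proceeds, making the overall estimate clean.
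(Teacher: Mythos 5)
Your argument is correct and is essentially the paper's own proof: pick a metric structure on $\Gamma_0$, lift it so that $m(G_n)=m(G_0)$ while $M(G_n)=d^n M(G_0)$, combine Lemma~\ref{lem:pq-ineq} (applied on $G_n$ to $f\circ\phi_n$) with Lemma~\ref{lem:qp-ineq} (applied to a constant-derivative minimizer of $[f]$ on $G_0$), take the supremum over $[f]$ via Equation~\eqref{eq:Epp-SF}, and let the $\Gamma_0$-dependent constant wash out after taking $n$-th roots. The estimate you arrive at, $E^p_p[\phi_n]\le C\,d^{n\left(\frac{1}{p}-\frac{1}{q}\right)}E^q_q[\phi_n]$, is exactly the paper's displayed inequality (read literally it rearranges to $\oE^q_q\ge d^{\frac{1}{q}-\frac{1}{p}}\oE^p_p$ rather than the factor as printed in the statement, but that bookkeeping point is shared with the paper's own write-up and is not a defect of your argument relative to it).
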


\begin{proof}
  Pick a metric structure $G_0$ on $\Gamma_0$ and lift it as in
  Proposition~\ref{prop:Epp-decrease}.
  Observe that $M(G_n) = d^n M(G_0)$. Then by Lemmas~\ref{lem:qp-ineq}
  and \ref{lem:pq-ineq} for any length graph~$K$ and $[f] \co G_0 \to K$,
  \[
  \frac{E^p_\infty[f \circ \phi^n]}{E^p_\infty[f]} \le
    \left(\frac{M(G_n)}{m(G_0)}\right)^{\frac{1}{p} - \frac{1}{q}}
      \frac{E^q_\infty[f \circ \phi^n]}{E^q_\infty[f]}
      = C\cdot d^{n \left(\frac{1}{p}-\frac{1}{q}\right)}\cdot E^q_q[\phi^n]
  \]
  for some constant~$C$. Then taking the supremum over~$K$ and~$[f]$,
  taking the $n$'th root, and passing to the limit shows that
  $\oE^p_p[\pi,\phi] \le d^{1/p-1/q}\cdot\oE^q_q[\pi,\phi]$.
\end{proof}

\begin{corollary}
  $\oE^p_p[\pi,\phi]$ is a continuous function of~$p$.
\end{corollary}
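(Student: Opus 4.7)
The plan is to sandwich $\oE^p_p[\pi,\phi]$ between $\oE^q_q[\pi,\phi]$ and a small multiple of $\oE^q_q[\pi,\phi]$, then let $q \to p$. Combining Proposition~\ref{prop:Epp-decrease} and Proposition~\ref{prop:Epp-lower-bound} (reading the latter in the form derived in its proof, namely $\oE^p_p[\pi,\phi] \le d^{1/p-1/q}\,\oE^q_q[\pi,\phi]$ for $1 \le p \le q \le \infty$), we obtain the two-sided estimate
\[
\oE^q_q[\pi,\phi] \;\le\; \oE^p_p[\pi,\phi] \;\le\; d^{\,1/p - 1/q}\,\oE^q_q[\pi,\phi]
\qquad (1 \le p \le q \le \infty).
\]
This is the only ingredient needed.

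To verify continuity at an interior point $p_0 \in (1,\infty)$, I would treat the one-sided limits separately using the sandwich. For right-continuity, fix $p = p_0$ and let $q \to p_0^+$: the sandwich gives $\oE^q_q[\pi,\phi] \le \oE^{p_0}_{p_0}[\pi,\phi] \le d^{\,1/p_0 - 1/q}\oE^q_q[\pi,\phi]$, and since $d^{\,1/p_0 - 1/q} \to d^0 = 1$, monotonicity forces $\oE^q_q[\pi,\phi] \to \oE^{p_0}_{p_0}[\pi,\phi]$. For left-continuity, fix $q = p_0$ and let $p \to p_0^-$: the sandwich gives $\oE^{p_0}_{p_0}[\pi,\phi] \le \oE^p_p[\pi,\phi] \le d^{\,1/p - 1/p_0}\oE^{p_0}_{p_0}[\pi,\phi]$, and again $d^{\,1/p - 1/p_0} \to 1$, so $\oE^p_p[\pi,\phi] \to \oE^{p_0}_{p_0}[\pi,\phi]$.

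The endpoints require only that the same argument be applied one-sidedly. At $p_0 = 1$, only right-continuity is meaningful, and the first calculation above with $p_0 = 1$ handles it. At $p_0 = \infty$, only left-continuity is meaningful, and the second calculation with $p_0 = \infty$ (so $1/p_0 = 0$) shows $d^{\,1/p} \to 1$ as $p \to \infty$, giving the result. There is no genuine obstacle here: the only substantive input is that the covering degree $d$ is finite, so $d^{\,1/p - 1/q}$ is continuous and equals $1$ on the diagonal. The preceding two propositions do all the work, and continuity is simply the observation that the gap factor $d^{\,1/p-1/q}$ closes as $p$ and $q$ come together.
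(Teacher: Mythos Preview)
Your proof is correct and is exactly the approach the paper intends: the corollary is stated without proof immediately after Propositions~\ref{prop:Epp-decrease} and~\ref{prop:Epp-lower-bound}, and follows from the sandwich you wrote down. You were also right to read Proposition~\ref{prop:Epp-lower-bound} in the form $\oE^p_p \le d^{1/p-1/q}\,\oE^q_q$ coming out of its proof; the displayed statement in the paper has a sign slip in the exponent.
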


\begin{question}
  What more can be said about $\oE^p_p[\pi,\phi]$ as
  $p$ varies? For instance, an examination of Equation~\eqref{eq:Epp-std}
  shows that for any map $\phi \co G_1 \to G_2$ between
  metric graphs and $1 \le p \le q \le \infty$,
  \begin{align}
  \bigl(E^q_q(\phi)\bigr)^{q/(q-1)} &\le
    \bigl(E^p_p(\phi)\bigr)^{p/(p-1)}\nonumber\\
  \intertext{and so for a virtual endomorphism}
  \bigl(\oE^q_q[\pi,\phi]\bigr)^{q/(q-1)} &\le
    \bigl(\oE^p_p[\pi,\phi]\bigr)^{p/(p-1)}.\label{eq:asympt-ineq}
  \end{align}
  When $\oE^p_p[\pi,\phi] < 1$, this is stronger
  than Proposition~\ref{prop:Epp-decrease}. Is there more? For
  instance, is $\oE^p_p[\pi,\phi]$ a convex function of~$p$ in some
  sense?
\end{question}

The cases $p = 1$, $2$, or $\infty$ of the asymptotic
energy are of particular interest.
\begin{itemize}
\item The most important case is $\oE^\infty_\infty[\pi,\phi]$, with
  $\oE^\infty_\infty < 1$ iff
  $\pi, \phi \co G_1 \rightrightarrows G_0$ is a combinatorial model
  for an \emph{expanding} dynamical system in the sense of
  Nekrashevych~\cite{Nekrashevych14:CombModel}.%
  \footnote{Nekrashevych's combinatorial models are more general,
    allowing higher-dimensional cells.}
  This notion of expanding is
  quite important. In the expanding case, there is a well-defined
  (ordinary) dynamical system on an
  inverse limit Julia set, independent of the details of the
  combinatorial model.
  Furthermore, the iterated monodromy groups of
  an expanding dynamical system are
  well-behaved~\cite{Nekrashevych05:SelfSimilar}, having, for
  instance, solvable word problem, while still allowing for many
  interesting examples (e.g., groups of intermediate growth).
\item For $p=2$, Theorem~\ref{thm:detect-rational} relates
  $\oE^2_2[\pi,\phi] < 1$ to rational maps.
\item The other natural special case is
  $p=1$. If the weights are all~$1$, $E^1_1[\phi] < 1$ implies that $\phi$
  is null-homotopic, so in non-trivial cases
  $\oE^1_1[\pi,\phi] \ge 1$.
  If $\oE^\infty_\infty[\pi,\phi] < 1$ (so there is a Julia set),
  it appears that $\oE^1_1[\pi,\phi] > 1$ when the Julia set has
  Sierpiński-carpet-like behavior, and that $\oE^1_1[\pi,\phi] = 1$
  when the Julia set has many local cut points in the sense of
  Carrasco Piaggio~\cite{CP14:ConfDim}.
\end{itemize}
In forthcoming joint work with Kevin Pilgrim \cite{PT:ConfDim}, we
will show that, if
$p^*$ is the Ahlfors regular conformal dimension of the Julia set
of a virtual endomorphism $(\pi,\phi)$, then
\[
  \oE^{p^*}_{p^*}[\pi,\phi] = 1.
\]
Combined with the bounds on how $\oE^p_p$ varies as a function of~$p$,
this allows us to give concrete bounds on the Ahlfors regular
conformal dimension.


\section{Obstructions}
\label{sec:obstructions}

\subsection{Obstructions for rational maps}
\label{sec:obstr-rat}
We now relate Theorem~\ref{thm:detect-rational} to W. Thurston's
Obstruction Theorem. With an eye to generalizations, we rephrase it in
terms of elastic
multi-curves without the assumption of
hyperbolic type.

\begin{definition}\label{def:join}
  An \emph{elastic multi-curve} $A = (C, \alpha, c)$ on a
  surface~$\Sigma$ is a multi-curve $c \co C \hookrightarrow \Sigma$,
  together with an elastic structure (i.e., metric)~$\alpha$ on~$C$. The
  \emph{support} of~$A$ is the underlying multi-curve $(C,c)$.

  There are two natural operations on elastic multi-curves. First,
  if $\pi \co \wt \Sigma \to \Sigma$ is a covering map and $A$ is an
  elastic multi-curve on~$\Sigma$, there is a multi-curve
  $\wt A = \pi^{-1}(A)$ on $\wt \Sigma$ obtained by pull-back in the
  usual way.

  Second, if $A = (C,\alpha, c)$ is an elastic multi-curve on~$\Sigma$,
  then the \emph{join} $\Join(A)$ is the elastic multi-curve obtained by
  \begin{itemize}
  \item deleting all components of~$C$ whose images are null-homotopic or
    bound a punctured disk, and
  \item replacing any components
    $(C_1,\alpha_1),\dots,(C_k,\alpha_k)$ of~$A$ whose images are parallel with a single
    component $(C_0,\alpha_0)$, with elastic length obtained by the
    harmonic sum:
    \[
    \alpha_0 = \alpha_1 \oplus \dots \oplus \alpha_k =
    \frac{1}{\frac{1}{\alpha_1} + \dots + \frac{1}{\alpha_k}}.
    \]
  \end{itemize}
  (The harmonic sum comes from the parallel law for resistors or for
  springs.)
\end{definition}

\begin{definition}
  An \emph{obstruction} for~$f$ is 
  \begin{itemize}
  \item an elastic multi-curve~$A$ on~$\Sigma \setminus P$ and
  \item a map $\psi \co A \to \Join(f^{-1}(A))$
  \end{itemize}
  so that
  \begin{itemize}
  \item $\psi$ commutes up to homotopy with the maps to
    $\Sigma \setminus P$ and
  \item $\Emb(\psi) \le 1$.
  \end{itemize}
\end{definition}

\begin{remark}
  Contrast the obstruction map~$\psi$ with the map
  $\phi \co f^{-1}(G) \to G$ in the statement of
  Theorem~\ref{thm:detect-rational}: the maps are going the opposite
  direction.
\end{remark}

\begin{citethm}[W.~Thurston, Douady-Hubbard \cite{DH93:ThurstonChar}]
  \label{thm:thurston-obstruction}
  Let $f\co (\Sigma,P)\righttoleftarrow$ be a topological branched
  self-cover so that the first return map is not a Lattés map on any
  component. Then $f$ is
  equivalent to a rational map iff there is no obstruction for~$f$.
\end{citethm}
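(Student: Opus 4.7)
The proof plan is to deduce Theorem~\ref{thm:thurston-obstruction} from the positive characterization in Theorem~\ref{thm:detect-rational-a}, exploiting a duality between the forward spine map $\phi_G \co f^{-1}(G_0) \to G_0$ and the backward obstruction map $\psi \co A \to \Join(f^{-1}(A))$. The two implications go in opposite directions and require different tools.

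For the easy direction (rational $\Rightarrow$ no obstruction), I would start from the conformal branched virtual endomorphism $\pi, \phi \co (S_1, P_{1J}) \rightrightarrows (S_0, P_{0J})$ produced by Theorem~\ref{thm:rational-surfaces-embed}, with $\phi$ annular. Given any candidate obstruction $(A, \psi)$, realize $A$ as a simple multi-curve on $S_0$ and compute the extremal length of each component under pull-back through $f^{-1}$. By annularity and Theorem~\ref{thm:sf-cover}, extremal length strictly decreases under pull-back; the harmonic sum in $\Join$ respects this contraction via the parallel law. A direct computation, dual to the relationship between embedding energy and weighted arc diagrams (as in Kahn's work mentioned in Section~\ref{sec:prior}), then forces $\Emb(\psi) > 1$, contradicting the obstruction condition.

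For the hard direction (no obstruction $\Rightarrow$ rational), I would argue by contrapositive, specializing first to hyperbolic type where Theorem~\ref{thm:detect-rational-a} applies directly. If $f$ is not equivalent to a rational map, then $\ASF[\pi_G, \phi_G] \ge 1$, so by Theorem~\ref{thm:emb-sf} we have $\SF[\phi_{G,n}] \ge 1$ for all $n$. By Proposition~\ref{prop:sf-gen-simp}, this supremum is realized along a sequence of simple multi-curves $c_n$ on $G_0$ whose extremal length is not asymptotically contracted by pull-back. Since the isotopy classes of non-peripheral simple components of multi-curves on a fixed surface form a countable set, the union of supports appearing in the sequence $(c_n)$ eventually repeats among finitely many isotopy classes of multi-curve supports. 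On this finite set of supports, I would form a transition matrix whose $(i,j)$ entry records the contribution of component $j$ of $f^{-1}(\gamma_i)$ to $\gamma_j$ weighted by the local degree, and apply the Perron--Frobenius theorem to extract a non-negative eigenvector $\alpha$ with eigenvalue $\lambda \ge 1$. Interpreting $\alpha$ as elastic weights yields the desired obstruction $(A, \psi)$ with $\Emb(\psi) = 1/\lambda \le 1$.

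The main obstacle is the Perron--Frobenius step: one must verify that the limit support is nonempty and non-peripheral, and that the eigenvalue is at least~$1$. This is precisely where the exclusion of Lattès first-return maps enters, since a Lattès map gives an invariant multi-curve with eigenvalue exactly $1$ while still being realized as a rational map; this borderline case must be separated out by hand. The classical treatment in Douady--Hubbard \cite{DH93:ThurstonChar} handles this via the Thurston pullback map on Teichmüller space, showing that obstructions detect failure of the pullback to have a fixed point. A secondary difficulty is extending beyond hyperbolic type to the general statement (including parabolic and torus cases), which requires the non-compact-type version of Theorem~\ref{thm:detect-rational} announced in Section~\ref{sec:future}.
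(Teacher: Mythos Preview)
The paper does not prove this theorem at all: it is stated as a \texttt{citethm}, attributed to W.~Thurston and Douady--Hubbard \cite{DH93:ThurstonChar}, and only the equivalence of the present formulation with the classical eigenvalue formulation is justified (via Proposition~\ref{prop:obstruction-e-val}). In fact, the paper explicitly remarks after Question~\ref{quest:obstruct} that a direct equivalence between Theorem~\ref{thm:thurston-obstruction} and Theorem~\ref{thm:detect-rational} is \emph{not} established here, and that such an argument ``might also give another proof of Theorem~\ref{thm:thurston-obstruction}.'' So your entire strategy of deducing the obstruction theorem from Theorem~\ref{thm:detect-rational-a} is, by the paper's own account, an open-ended hope rather than an available route. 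Note also a circularity risk: the last step in the proof of Theorem~\ref{thm:detect-rational-a} (passing from $f^{\circ n}$ rational to $f$ rational) already invokes Theorem~\ref{thm:thurston-obstruction}, or at least the contraction of the Teichm\"uller pullback map that underlies it.

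Beyond this, your hard direction has a genuine gap. From $\SF[\phi_{G,n}] \ge 1$ for all~$n$ you extract a sequence of simple multi-curves~$c_n$ and then assert that ``the union of supports appearing in the sequence $(c_n)$ eventually repeats among finitely many isotopy classes of multi-curve supports.'' This is false without further input: there are infinitely many isotopy classes of simple closed curves on $\Sigma\setminus P$, and nothing in your setup prevents the~$c_n$ from wandering through infinitely many of them. The classical Douady--Hubbard argument does \emph{not} proceed this way; it analyzes the Thurston pullback on Teichm\"uller space and shows that if iterates diverge, then some simple multi-curve has extremal length tending to zero, and a careful study of \emph{which} curves can do this produces the invariant obstruction. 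Your Perron--Frobenius step never gets off the ground because the finite transition matrix you want to build is not shown to exist.
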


The usual formulation of Theorem~\ref{thm:thurston-obstruction} refers
to the maximum eigenvalue of a matrix constructed out of the multi-curves
underlying~$A$. The
above formulation is equivalent by Perron-Frobenius theory, as we
spell out in Proposition~\ref{prop:obstruction-e-val}
below. Intuitively, Theorem~\ref{thm:thurston-obstruction} says that
$f$ is rational iff there is no conformal collection of annuli that
gets (weakly) wider under backwards iteration.

\subsection{Obstructions for virtual endomorphisms}
\label{sec:obstr-virt-endo}

We now turn to obstructions in the more general setting
of the asymptotic $p$-conformal energies from
Section~\ref{sec:other-energies}. We also switch to virtual
endomorphisms of topological spaces (e.g., graphs) or orbifolds. (In
the context of branched self-covers $f \co (\Sigma,P)
\righttoleftarrow$, we should consider
the orbifold of~$f$.)

\begin{definition}
  For $1 < p < \infty$ and $\alpha_1, \alpha_2 \in \RR_{> 0}$, the
  \emph{$p$-harmonic sum} of $\alpha_1$ and~$\alpha_2$ is
  \begin{align}\label{eq:p-sum}
    \alpha_1 \oplus_p \alpha_2 &\coloneqq
       \bigl((\alpha_1)^{1-p} + (\alpha_2)^{1-p}\bigr)^{1/(1-p)}.\\
  \intertext{For $p = \infty$, set}
    \nonumber
    \alpha_1 \oplus_\infty \alpha_2 &\coloneqq \min(\alpha_1,\alpha_2).
  \end{align}
\end{definition}

This definition is chosen so that the $p$-energies satisfy a parallel
law.

\begin{proposition}\label{prop:join-parallel}
  For $1 < p \le q \le \infty$, let $[\phi] \co G^p \to H^q$ be a
  homotopy class of maps
  from a $p$-conformal graph to a $q$-conformal graph. Suppose that
  $G$ has two parallel edges $e_1$ and~$e_2$
  that are mapped to homotopic paths by~$\phi$. Let $G_3$ be the
  $p$-conformal graph~$G$ with $e_1$ and $e_2$ replaced by a single
  edge~$e_3$ with $\alpha(e_3) = \alpha(e_1) \oplus_p \alpha(e_2)$, and
  let $[\phi_3] \co G_3 \to H$ be the natural homotopy class. Then
  \[
  E^p_q[\phi_3] = E^p_q[\phi].
  \]
\end{proposition}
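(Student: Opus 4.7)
The plan is to reduce to the case $q = \infty$ with length-graph target, then do a clean edge-by-edge calculation. For the reduction, I will use the characterization of $E^p_q$ as a stretch factor in terms of maps to length graphs, generalizing Equation~\eqref{eq:Epp-SF}: for $[\phi] \co G^p \to H^q$,
\[
E^p_q[\phi] \;=\; \sup_{[f] \co H \to K}\frac{E^p_\infty[f \circ \phi]}{E^q_\infty[f]},
\]
where $f$ ranges over homotopy classes of maps to length graphs $K$ (this variant lives in the appendix of \cite{Thurston16:Elastic}). The denominator depends only on $H$, which is unchanged by the join operation, so it suffices to prove $E^p_\infty[f \circ \phi_3] = E^p_\infty[f \circ \phi]$ for every such $f$.

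Fix $f$ and set $\psi = f \circ \phi$, $\psi_3 = f \circ \phi_3$. An energy-minimizing PL representative of $[\psi]$ for $E^p_\infty = \lVert \psi' \rVert_p$ must have constant derivative $L(e)/\alpha(e)$ on each edge $e$, where $L(e)$ is the length of the tight geodesic representative of $\psi|_e$ in $K$; this is a standard tightening/convexity argument (for fixed vertex images, redistributing speed along an edge strictly increases the $L^p$-norm unless it is already constant). Because $e_1$ and $e_2$ are parallel and $\phi|_{e_1} \sim \phi|_{e_2}$ rel endpoints, the restrictions $\psi|_{e_1}$ and $\psi|_{e_2}$ share both endpoints and lie in the same rel-endpoints homotopy class in $K$, and therefore tighten to a single common geodesic of some length $L$. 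On $G_3$, the edge $e_3$ carries (by the natural homotopy class) that same geodesic, and on all other edges the optimal data for $\psi$ and for $\psi_3$ agree.

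The contribution of $\{e_1, e_2\}$ to $\lVert \psi' \rVert_p^p$ is then
\[
\alpha_1 (L/\alpha_1)^p + \alpha_2 (L/\alpha_2)^p \;=\; L^p\bigl(\alpha_1^{1-p} + \alpha_2^{1-p}\bigr),
\]
while the contribution of $e_3$ to $\lVert \psi_3' \rVert_p^p$ is $\alpha_3 (L/\alpha_3)^p = L^p\,\alpha_3^{1-p}$. By the defining identity $\alpha_3^{1-p} = \alpha_1^{1-p} + \alpha_2^{1-p}$ of the $p$-harmonic sum~\eqref{eq:p-sum}, these are equal; summing over all edges gives $E^p_\infty[\psi_3] = E^p_\infty[\psi]$. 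The case $p = \infty$ works identically using $\lVert\cdot\rVert_\infty$ and $\Lip$: the $\{e_1,e_2\}$ contribution is $\max(L/\alpha_1,L/\alpha_2) = L/\min(\alpha_1,\alpha_2) = L/\alpha_3$, matching the $e_3$ contribution. Taking the supremum over $[f]$ yields the claim.

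The main potential obstacle is justifying that an optimal representative can be chosen so that $\psi|_{e_1}$ and $\psi|_{e_2}$ both tighten to the same geodesic simultaneously. This is underwritten by the parallel hypothesis: since $e_1$ and $e_2$ share both endpoints, once the vertex images are fixed the optimizations on $e_1$ and on $e_2$ decouple, and each is uniquely solved by the constant-speed reparametrization of the (common) geodesic representative of their common rel-endpoints homotopy class. A secondary, bookkeeping issue is confirming the variational formula in the form stated for all $p \le q$; if the cited form is not directly available, one argues from whatever intrinsic definition of $E^p_q$ is used in~\cite{Thurston16:Elastic}, with the $p$-harmonic identity doing the same work.
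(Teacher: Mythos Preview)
Your proof is correct and follows the same strategy as the paper's: reduce to the $q=\infty$ case via the stretch-factor characterization of $E^p_q$, then compute directly using constant-derivative optimizers and the defining identity $\alpha_3^{1-p}=\alpha_1^{1-p}+\alpha_2^{1-p}$ of the $p$-harmonic sum. The paper's proof is a two-sentence sketch of exactly this argument; you have simply written out the computation it leaves implicit.
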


\begin{proof}
  If $q=\infty$, then the optimal maps in~$\phi$ and in~$\phi'$ will
  be constant-derivative. The result follows by examining the energy
  and comparing the derivatives. The general statement follows from
  the $q=\infty$ case by Equation~\eqref{eq:Epp-SF}.
\end{proof}

For $p=1$, we do not have a $p$-length in the same way; instead, a
$1$-conformal graph is a weighted graph, and when joining them in
parallel we add the weights.

\begin{definition}\label{def:join-top}
  A \emph{$p$-conformal multi-curve} $A = (C, \alpha, c)$ on a
  space~$X$ is a multi-curve $c \co C \to \Gamma$, together with a
  $p$-conformal structure~$\alpha$ on~$C$. (We will sometimes allow
  punning and write
  the domain of the map~$c$ as~$A$.)
  The \emph{join}
  $\Join_p(A)$ is obtained by
  deleting components of~$C$ whose image is null-homotopic or torsion
  in~$\pi_1(X)$ and
  replacing components of~$C$ whose images are parallel (up to
  homotopy) by a single component so that
  \begin{itemize}
  \item for $p>1$, the new $p$-length is the
    $p$-harmonic sum of the constituent $p$-lengths and
  \item for $p=1$, the new weight is the (ordinary)
    sum of the constituent weights.
  \end{itemize}
  
  If $\pi \co X_1 \to X_0$ is a covering map
  and~$A$ is a $p$-conformal multi-curve
  on~$X_0$, we have the usual \emph{pull-back} $\pi^* A$, a
  $p$-conformal multi-curve on~$X_1$. If $\phi \co X_1 \to X_0$ is
  a map and $A = (C, \alpha, c)$ is a $p$-conformal
  multi-curve on~$X_1$, the \emph{push-forward} $\phi_* A$ is
  $\Join_p(C, \alpha, \phi \circ c)$.

  If $\pi, \phi \co X_1 \rightrightarrows X_0$ is
  a virtual endomorphism, a \emph{$p$-obstruction} for $(\pi,\phi)$ is
  \begin{itemize}
  \item a $p$-conformal multi-curve~$A = (C,\alpha,c)$ on $\Gamma_0$ and
  \item a map $\psi \co A \to \phi_*\pi^* A$
  \end{itemize}
  so that
  \begin{itemize}
  \item $\psi$ commutes up to homotopy with the maps to~$X_0$ and
  \item $E^p_p(\psi) \le 1$.
  \end{itemize}
\end{definition}

\begin{proposition}\label{prop:join-curves}
  If $1 \le p \le q \le \infty$, $G$ is a $q$-conformal graph, and $A$
  is a $p$-conformal multi-curve on~$G$, then
  $E^p_q[A] = E^p_q[\Join_p(A)]$.
\end{proposition}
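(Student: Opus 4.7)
The plan is to argue component by component, after reducing to the case $q = \infty$. The stretch-factor characterization invoked in the proof of Proposition~\ref{prop:join-parallel} (the generalization of Equation~\eqref{eq:Epp-SF} to $E^p_q$) writes
\[
E^p_q[c] = \sup_{[f]\co G \to K} \frac{E^p_\infty[f \circ c]}{E^q_\infty[f]},
\]
where $K$ ranges over length graphs. Because $f$ sends homotopic components of $c$ to homotopic loops in $K$ and null-homotopic/torsion components to null-homotopic loops, the operations defining $\Join_p$ commute with post-composition by~$f$. Thus it suffices to prove $E^p_\infty[f\circ c] = E^p_\infty[f\circ \Join_p(c)]$ for every~$[f]$, i.e.\ to prove the proposition in the case where $G$ is itself a length graph.

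So fix $q = \infty$ and decompose $C = \bigsqcup_i C_i$ into its circle components, with $p$-lengths $\alpha_i$. For the first modification in $\Join_p$, a component $C_i$ whose image is null-homotopic or torsion admits a constant map in its homotopy class, so its contribution to the infimum $E^p_\infty[c]$ is zero and it may be deleted without changing the energy. For the second, I will compute $E^p_\infty(c)$ in closed form on a constant-speed representative: on $C_i$ the minimizer maps constant-speed onto the shortest loop $\gamma_i$ in the free homotopy class of $c|_{C_i}$, of length $L_i$, so for $1 < p < \infty$,
\[
E^p_\infty(c|_{C_i})^p = \int_{C_i} (L_i/\alpha_i)^p\, d\alpha = L_i^{\,p}\,\alpha_i^{1-p}.
\]
Summing over components and noting that the energies on different components decouple (since $C$ is a disjoint union) gives $E^p_\infty[c]^p = \sum_i L_i^{\,p}\,\alpha_i^{1-p}$.

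The main step is then the parallel-law computation: if $C_{i_1},\dots,C_{i_k}$ are freely homotopic to a common loop of length~$L$, their combined contribution is $L^p\sum_j \alpha_{i_j}^{1-p}$, which by the very definition of $\oplus_p$ equals $L^p\,\alpha_0^{1-p}$ where $\alpha_0 = \alpha_{i_1} \oplus_p \cdots \oplus_p \alpha_{i_k}$ is the joined $p$-length. This matches the contribution of the single replacement component in $\Join_p(c)$, so $E^p_\infty[c]$ is unchanged. The case $p = 1$ (where a $1$-conformal structure is a weighting and $\oplus_1$ is ordinary addition) is handled in parallel: here $E^1_\infty(c|_{C_i}) = w_i L_i$, so the combined contribution $L\sum_j w_{i_j}$ again matches the contribution $L\,w_0$ of the joined component.

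The only real obstacle is the $q = \infty$ reduction, which depends on having the stretch-factor characterization of $E^p_q$ available off-diagonal; this is asserted and used in the same way in Proposition~\ref{prop:join-parallel}, so I would simply cite it. Everything else is the direct verification that the formulas defining $\oplus_p$ and the additive join for $p=1$ are precisely the algebraic identities needed to reassemble the per-component contributions to $E^p_\infty$.
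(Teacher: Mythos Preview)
Your proposal is correct and matches the paper's approach exactly: the paper's proof of this proposition is the single line ``Parallel to Proposition~\ref{prop:join-parallel}'', and that proposition is proved by first handling $q=\infty$ via the constant-derivative optimizer and then reducing the general case via the stretch-factor characterization~\eqref{eq:Epp-SF}. You have simply unpacked that sketch, carrying out the per-component $E^p_\infty$ computation and verifying that the definition of $\oplus_p$ (and ordinary addition for $p=1$) is exactly what is needed.
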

\begin{proof}
  Parallel to Proposition~\ref{prop:join-parallel}.
\end{proof}

\begin{proposition}\label{prop:obstruction-energy}
  Let $\pi,\phi \co G_1 \rightrightarrows G_0$ be a virtual
  endomorphism of $p$-conformal graphs. If there is a $p$-obstruction
  for~$(\pi,\phi)$, then $E^p_p[\phi] \ge 1$. Likewise, if
  $f \co (\Sigma, P) \righttoleftarrow$ is a topological branched
  self-cover of hyperbolic type compatible with $(\pi,\phi)$ and there
  is a $p$-obstruction for~$f$, then $E^p_p[\phi] \ge 1$.
\end{proposition}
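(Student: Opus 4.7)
The plan is to chain together sub-multiplicativity of $E^p_p$, its cover invariance, and the join invariance asserted in Proposition~\ref{prop:join-curves} into a single inequality $E^p_p[c] \le E^p_p[\phi]\cdot E^p_p[c]$, from which the conclusion $E^p_p[\phi] \ge 1$ follows by cancellation. This mirrors the usual Perron--Frobenius argument behind the $p=2$ case (Theorem~\ref{thm:thurston-obstruction}).

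Write $c \co A \to G_0$ and $c' \co \phi_*\pi^*A \to G_0$ for the inclusions of the two multi-curves, and $\iota \co \pi^*A \to G_1$ for the inclusion of the pulled-back multi-curve. The obstruction hypothesis $c' \circ \psi \sim c$, combined with sub-multiplicativity and $E^p_p(\psi) \le 1$, yields $E^p_p[c] \le E^p_p[c']$. Proposition~\ref{prop:join-curves}, applied with $q=p$, says that passing from the multi-curve $(\pi^*A, \phi \circ \iota)$ on $G_0$ to its join $\phi_*\pi^*A$ preserves $E^p_p$, so $E^p_p[c'] = E^p_p[\phi \circ \iota]$. Sub-multiplicativity gives $E^p_p[\phi \circ \iota] \le E^p_p[\phi]\cdot E^p_p[\iota]$, and cover invariance (the square in which $\iota$ covers $c$ via $\pi$) gives $E^p_p[\iota] = E^p_p[c]$. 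Concatenating,
\[
E^p_p[c] \;\le\; E^p_p[c'] \;=\; E^p_p[\phi \circ \iota] \;\le\; E^p_p[\phi]\cdot E^p_p[c].
\]
Since $\Join_p$ deletes null-homotopic and torsion components, $A$ is essential, so $0 < E^p_p[c] < \infty$, and cancellation yields $E^p_p[\phi] \ge 1$.

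For the second statement, a $p$-obstruction for~$f$ translates into a $p$-obstruction for the compatible graph virtual endomorphism $(\pi,\phi)$: simple multi-curves on $\Sigma\setminus P$ isotope into the spine $\Gamma_0$ (and their preimages under $f$ into $\Gamma_1 = f^{-1}(\Gamma_0)$), the preimage $f^{-1}(A)$ is identified with $\pi^*A$, and the join operation along with the map $\psi$ and its energy bound are unchanged by this isotopy. Applying the first part of the proposition then concludes the argument. The principal subtlety is the appeal to Proposition~\ref{prop:join-curves} — one must verify that the $p$-harmonic sum rule on parallel strands is precisely what is needed to preserve $E^p_p$ at $q=p$ — but this is exactly the content of that proposition, so no new difficulty arises.
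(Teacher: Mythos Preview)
Your proof is correct and follows essentially the same route as the paper: both chain cover invariance ($E^p_p[\iota] = E^p_p[c]$), Proposition~\ref{prop:join-curves} ($E^p_p[c'] = E^p_p[\phi \circ \iota]$), sub-multiplicativity, and the obstruction bound $E^p_p(\psi)\le 1$ into the single inequality $E^p_p[c] \le E^p_p[\phi]\cdot E^p_p[c]$. Your write-up is in fact slightly more careful than the paper's in two places: you explicitly justify why $E^p_p[c] > 0$ (needed for the cancellation), and you spell out the translation of a surface $p$-obstruction to a graph one, which the paper dismisses as immediate.
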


\begin{remark}
  In the branched self-cover case, if $f$ is not of hyperbolic type
  and we do not use orbifolds, then
  considering boundary curves shows that we always have
  $E^p_p[\phi] \ge 1$. See Section~\ref{sec:future}.
\end{remark}

\begin{proof}
  Let $(A,\psi)$ be a $p$-obstruction for$(\pi,\phi)$. We have a diagram of maps
  \begin{equation*}
    \begin{tikzpicture}[baseline]
      \matrix[row sep=0.8cm, column sep=1cm]{
        \node (A2) {$\pi^* A$}; &
        \node (G1) {$G_1$}; \\
        \node (A1) {$\phi_*\pi^*A$}; \\
        \node (A0) {$A$}; &
        \node (G0) {$G_0$,}; \\
      };
      \draw[->] (A2) -- (A1);
      \draw[->] (A0) -- node[left,cdlabel]{\psi} (A1);
      \draw[->] (G1) -- node[right,cdlabel]{\phi} (G0);
      \draw[->] (A2) -- node[below,cdlabel]{\wt c} (G1);
      \draw[->] (A1) -- (G0);
      \draw[->] (A0) -- node[below,cdlabel]{c} (G0);
    \end{tikzpicture}
  \end{equation*}
  commuting up to homotopy. Since $E^p_p$ is invariant under covers,
  $E^p_p[\wt c] = E^p_p[c]$. Proposition~\ref{prop:join-curves}
  guarantees that
  \[
  E^p_p[\phi_*\pi^*A \to G_0] = E^p_p[\wt c \circ \phi].
  \]
  Then by
  sub\hyp multiplicativity and the assumptions, we have
  \begin{equation*}
    E^p_p[c] \le E^p_p[\psi] E^p_p[\wt c] E^p_p[\phi]
             \le \cdot E^p_p[c] \cdot E^p_p[\phi],
  \end{equation*}
  so $E^p_p[\phi] \ge 1$.
  The statement for branched self-covers follows immediately.
\end{proof}

See Corollary~\ref{cor:obstruction-iterate} for a strengthening of
Proposition~\ref{prop:obstruction-energy}.

\subsection{Duality}
\label{sec:duality}
There is a notion of \emph{$p$-conductance}, dual
to $p$-length.
Recall that we
can describe electrical
networks either in terms of resistances or in terms of
conductances. Resistances add in series and change by a harmonic sum
in parallel. Conductances add in parallel and change by a harmonic sum
in series. Alternately, we can think about the relation between
extremal length and modulus of conformal annuli.

There is a similar story for general $p$-conformal graphs when
$1 < p < \infty$. Recall \cite[Definition
\ref*{Elast:def:p-rescale}]{Thurston19:Elastic} that we can think of
an edge~$e$ in a $p$-conformal graph as an equivalence class of
rectangles of length~$\ell$ and height~$w$, with the $p$-length $\alpha$
given by
\[
\alpha = \frac{\ell}{w^{1/(p-1)}}.
\]
Now consider a dual view, interchanging the role of length and
height. The \emph{$p$-conductance} is
\begin{equation*}
  \gamma \coloneqq \alpha^{1-p}
                         = \frac{w}{\ell^{p-1}}
                         = \frac{w}{\ell^{1/(\pdual-1)}}
\end{equation*}
where $\pdual = p/(p-1)$ is the Hölder conjugate of~$p$.

Propositions~\ref{prop:join-parallel} and~\ref{prop:join-curves} say
that $p$-conductances add in parallel.

In checking whether a $p$-conformal multi-curve~$A$ is a $p$-obstruction,
there are two basic operations. Let us recall what
happens to the $p$-lengths.
\begin{itemize}
\item We pass to a cover by taking $f^{-1}(A)$ (i.e., the pullback by
  the covering map). A connected cover of a circle is necessarily a
  (longer) circle, which is series composition. Thus if a component of
  $f^{-1}(A)$ covers a component of $A$ by a degree~$d$ map, the
  $p$-length gets multiplied by~$d$.
\item We merge parallel components in the $\Join_p$
  operation. The $p$-weights change by the $p$-harmonic sum
  (Equation~\eqref{eq:p-sum}).
\end{itemize}
If we work with the dual $p$-conductances instead, the two operations
switch in complexity.
\begin{itemize}
\item If a circle of $p$-conductance~$\gamma$ is
  covered by a degree~$d$ map, the pull-back $p$-conductance is
  $d^{1-p}\gamma$.
\item The parallel composition in $\Join_p$ becomes simpler:
  add the constituent $p$-conductances.
\end{itemize}

\subsection{Obstruction matrices and invariant multi-curves}
\label{sec:obstruction-matrices}

We now investigate when we can choose $p$-lengths on a given curve
to make it into a $p$-obstruction. As a result of the previous
section, if we use $p$-conductances to
search for $p$-obstructions,
we get linear
inequalities and can construct a matrix.

\begin{definition}
  Let $\pi, \phi \co X_1 \rightrightarrows X_0$ be a virtual
  endomorphism,
  let $c \co C \to X_0$ be a multi-curve, and let $1 \le p < \infty$. Then the
  \emph{$p$-obstruction matrix}
  $M_{C,p}$ of~$C$ is the square matrix with rows and columns
  indexed by the components of~$C$, with the $(C_i,C_j)$ entry given
  by
  \[
    \sum_{\substack{D \in \pi^* C_i\\ \phi_* D \sim C_j}}
      \bigl(\deg (D \overset{\pi}\to C_i) \bigr)^{1-p}.
  \]
  In other words, consider all components~$D$ of the multi-curve
  $\pi^* C_i$ on~$X_1$ that push forward to $C_j$, and sum a power of the degree that
  $D$ covers $C_i$. There may be
  components of $\pi^*C_i$ that do not push forward to any $C_j$; these
  components are ignored.
\end{definition}

The matrix $M_{C,p}$ is designed to mimic the action of $\phi_*\pi^*$ on
$p$-conformal multi-curves. Suppose $C$ has $n$ components and let
$\gamma = (\gamma_i)_{i=1}^n \in \RR_{\ge 0}^n$ be a non-negative
vector. Then define $A(\gamma)$ to be the $p$-conformal multi-curve in which
a component~$C_i$ of~$C$ is given $p$\hyp conductance~$\gamma_i$, or is
dropped if $\gamma_i=0$.

\begin{lemma}\label{lem:pullback-matrix}
  For $\gamma$ as above,
  \[
  \phi_*\pi^*A(\gamma) = A(M_{C,p} \gamma) + A'
  \]
  where $A'$ is a $p$-conformal multi-curve whose support is disjoint
  from~$C$.
\end{lemma}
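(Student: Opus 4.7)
The plan is to unpack the definitions of pull-back, push-forward, and the $\Join_p$ operation in terms of $p$-conductances rather than $p$-lengths, using the duality discussed in Section~\ref{sec:duality}, and then to match the resulting formula against the definition of $M^p_C$.

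First I would rewrite everything in terms of $p$-conductances. By the duality above, assigning $p$-conductance $\gamma_i$ to the component $C_i$ corresponds to giving it $p$-length $\gamma_i^{1/(1-p)}$ (or simply weight $\gamma_i$ when $p=1$). The pull-back $\pi^* A(\gamma)$ is then a $p$-conformal multi-curve whose components are exactly the components $D$ of $\pi^{-1}(C_i)$ as $i$ varies, and by the first bullet of Section~\ref{sec:duality}, if $D$ covers $C_i$ with degree $d_D = \deg(D \xrightarrow{\pi} C_i)$ then the $p$-conductance of $D$ is $d_D^{1-p}\gamma_i$.

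Next, I would compute $\phi_* \pi^* A(\gamma)$ by applying $\phi$ to each such $D$ and then joining. Group the components of $\pi^* A(\gamma)$ according to the homotopy class of their image under $\phi$. By the second bullet of Section~\ref{sec:duality} (parallel composition adds $p$-conductances), the components whose images are all homotopic to a fixed curve $C_j$ contribute a single component, homotopic to $C_j$, whose $p$-conductance is
\[
\sum_{i}\,\sum_{\substack{D \in \pi^{*} C_i\\ \phi_* D \sim C_j}} d_D^{1-p}\,\gamma_i
\;=\;\sum_{i} (M^p_C)_{ji}\,\gamma_i
\;=\;(M^p_C\gamma)_j,
\]
by the very definition of $M^p_C$. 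This is exactly the $p$-conductance assigned to $C_j$ in $A(M^p_C \gamma)$. The remaining components of $\phi_* \pi^* A(\gamma)$ are those whose $\phi$-images are homotopic to curves not isotopic to any $C_j$ (and the null-homotopic ones are already dropped by $\Join_p$ in the definition of push-forward); packaging these together gives the promised remainder $A'$, whose support is disjoint (up to homotopy) from $C$.

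The argument is essentially bookkeeping, so I don't foresee a serious obstacle; the one point requiring some care is to be explicit that components $D \in \pi^* C_i$ that push forward to a curve not homotopic to any $C_j$ (and in particular the null-homotopic ones, which get discarded by $\Join_p$) are precisely the components that contribute to $A'$, and that no such component coincides in homotopy class with any $C_j$, so $A'$ and $A(M^p_C\gamma)$ are supported on disjoint homotopy classes and combine under $\Join_p$ without any further interaction.
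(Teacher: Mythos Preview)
Your proposal is correct and is essentially the approach the paper intends: the paper's proof is a one-liner (``Immediate from Proposition~\ref{prop:join-curves}''), relying on the bookkeeping from the Duality subsection that you have spelled out explicitly---pull-back multiplies $p$-conductance by $d^{1-p}$, and $\Join_p$ adds $p$-conductances in parallel, so the coefficient on $C_j$ is exactly $(M^p_C\gamma)_j$. Your expanded version is a faithful unpacking of that ``immediate''.
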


\begin{proof}
  Immediate from Proposition~\ref{prop:join-curves}.
\end{proof}

Observe that $M_{C,p}$ has non-negative
entries. Therefore, by the Perron-Frobenius Theorem, it has an
positive eigenvalue of maximum absolute value with a non-negative
eigenvector. (Our assumptions do not
guarantee that $M_{C,p}$ is irreducible, so the eigenvector might not
be unique.)  Let $\lambda(M_{C,p})$ be this positive eigenvalue.

\begin{proposition}\label{prop:obstruction-e-val}
  Let $\pi,\phi \co X_1 \rightrightarrows X_0$ be a virtual
  endomorphism and let $(C,c)$ be a multi-curve on~$X_0$. Then
  $\lambda(M_{C,p}) \ge 1$ iff there is a
  $p$-obstruction whose support is a sub-multi-curve of~$C$.
\end{proposition}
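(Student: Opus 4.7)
The plan is to encode a $p$-obstruction whose support is a sub-multi-curve of $C$ as a non-zero, componentwise non-negative vector $\gamma = (\gamma_i)$ indexed by the components $C_i$ of $C$ satisfying the Collatz-Wielandt inequality $M^p_C\gamma \ge \gamma$, and then invoke standard Perron-Frobenius theory, which says that such a $\gamma$ exists precisely when $\lambda(M^p_C) \ge 1$. In this dictionary, $\gamma_i$ is the $p$-conductance assigned to $C_i$ in the obstruction multi-curve $A(\gamma)$, with $\gamma_i = 0$ meaning that $C_i$ is omitted.

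The central computation translates the energy condition into the vector inequality. By Lemma~\ref{lem:pullback-matrix} together with the parallel-law form of the join (Proposition~\ref{prop:join-curves}), the only component of $\phi_*\pi^* A(\gamma)$ parallel to $C_i$ is the $C_i$-component of $A(M^p_C\gamma)$, carrying $p$-conductance $(M^p_C\gamma)_i$; the leftover $A'$ is supported off $C$. Thus the homotopy commutation condition on $\psi$ forces each restriction $\psi_i$ to map the $C_i$-component of $A$ into this target component. Both source and target realize the same free homotopy class in $X_0$, so $\psi_i$ must have degree $\pm 1$; choosing orientations, we may take degree $+1$, and the constant-derivative representative minimizes $E^p_p$ on that component. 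With $p$-lengths $\alpha_i = \gamma_i^{1/(1-p)}$ and $\alpha'_i = (M^p_C\gamma)_i^{1/(1-p)}$, the constant derivative is $\alpha'_i/\alpha_i$, and a direct application of Equation~\eqref{eq:Epp-std} yields
\[
E^p_p(\psi_i) \;=\; ((M^p_C\gamma)_i/\gamma_i)^{-1/p}.
\]
Taking the maximum over components, $E^p_p(\psi) \le 1$ if and only if $(M^p_C\gamma)_i \ge \gamma_i$ for every $i$ in the support of $\gamma$; since $\gamma_j = 0$ automatically satisfies $(M^p_C\gamma)_j \ge 0$, this is exactly the componentwise inequality $M^p_C\gamma \ge \gamma$.

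The two directions then follow immediately. If $(A,\psi)$ is a $p$-obstruction with support in $C$, take $\gamma$ to be its $p$-conductance vector; the computation above produces $\gamma \ne 0$ with $M^p_C\gamma \ge \gamma$, so $\lambda(M^p_C) \ge 1$ by Collatz-Wielandt. Conversely, if $\lambda(M^p_C) \ge 1$, pick a non-negative Perron-Frobenius eigenvector $\gamma$, restrict to its (nonempty) support to form $A = A(\gamma)$ (a sub-multi-curve of $C$), and let $\psi$ be the constant-derivative degree-one map onto the corresponding components of $\phi_*\pi^* A$; the displayed formula then gives $E^p_p(\psi) = \lambda(M^p_C)^{-1/p} \le 1$, certifying $(A,\psi)$ as a $p$-obstruction. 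The case $p=1$ runs in parallel, with weights in place of $p$-conductances and with $E^1_1(\psi_i) = \deg\psi_i = 1$ in the degree-one case, recovering the classical Thurston-Douady-Hubbard matrix criterion.

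The only delicate step is the forced-degree-one claim for $\psi_i$: it relies on the observation that after $\Join_p$, each component of $\phi_*\pi^*A$ is an abstractly parameterized circle mapping to $X_0$ in the common parallel class, so no extra wrapping is introduced, and on the fact that constant-derivative degree-one maps realize the infimum in the definition of $E^p_p$. Once this is granted, the rest is bookkeeping combining Lemma~\ref{lem:pullback-matrix}, Proposition~\ref{prop:join-curves}, and the Collatz-Wielandt formulation of Perron-Frobenius.
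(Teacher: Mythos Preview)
Your proof is correct and follows the same approach as the paper: encode the obstruction via the $p$-conductance vector $\gamma$, use Lemma~\ref{lem:pullback-matrix} to identify the $C$-components of $\phi_*\pi^*A(\gamma)$ with $A(M^p_C\gamma)$, compute $E^p_p$ of the degree-one map to get $((M^p_C\gamma)_i/\gamma_i)^{-1/p}$, and conclude via Collatz--Wielandt in one direction and the Perron--Frobenius eigenvector in the other. The paper's argument is slightly terser (it does not spell out the degree-one claim for $\psi_i$ or the component-by-component energy computation), but the substance is identical.
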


\begin{proof}
  If $\lambda = \lambda(M_{C,p}) \ge 1$, let $\gamma$ be the
  corresponding non-negative eigenvector. Then by
  Lemma~\ref{lem:pullback-matrix},
  \[
  \phi_*\pi^*A(\gamma) = A(\lambda\gamma) + A'.
  \]
  The $p$-conductances on the components of $A(\gamma)$
  are multiplied by~$\lambda$ in $\phi_*\pi^*A(\gamma)$ and the
  $p$-lengths are multiplied by $\lambda^{-1/(p-1)}$. Thus, tracing
  through the definition of~$E^p_p$ from Equation~\eqref{eq:Epp-std},
  we have
  $
  E^p_p\bigl[A(\gamma) \to \phi_*\pi^*A(\gamma)\bigr] =
  \lambda^{-1/p} \le 1$,
  so $A(\gamma)$ is an obstruction.

  Conversely, suppose that we have a $p$-obstruction~$A$ whose support
  is a sub-multi-curve of~$C$. Form a vector $\gamma$ from the
  $p$-conductances of~$A$, extended by~$0$ for components of~$C$ that
  are not in~$A$. Then Lemma~\ref{lem:pullback-matrix} and the
  assumption that $A$ is a $p$-obstruction say that each component of
  $M_{C,p} \gamma$ is greater than or equal to the corresponding
  component of~$\gamma$.
  By the Collatz-Wielandt formula, this implies that
  $\lambda(M_{C,p}) \ge 1$.
\end{proof}

\begin{corollary}\label{cor:obstruction-iterate}
  If $\pi, \phi \co \Gamma_1 \rightrightarrows \Gamma_0$ has a
  $p$-obstruction, then so do the iterates $\pi^n, \phi^n \co \Gamma_n
  \rightrightarrows \Gamma_0$. In particular, if there is a
  $p$-obstruction for $(\pi,\phi)$, then $\oE^p_p[\pi,\phi] \ge 1$.
\end{corollary}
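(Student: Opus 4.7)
\emph{Proof plan.} The second assertion follows from the first together with Proposition~\ref{prop:obstruction-energy}: if each iterate $(\pi_n,\phi_n)$ admits a $p$-obstruction, then $E^p_p[\phi_n] \ge 1$ for every $n \ge 1$, and taking $n$-th roots in the limit yields $\oE^p_p[\pi,\phi] \ge 1$. So the real task is to promote an obstruction for $(\pi,\phi)$ into obstructions for all of its iterates.

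My plan for the first assertion is to use the matrix reformulation of Proposition~\ref{prop:obstruction-e-val}. Let $C \subset X_0$ be the support of the hypothesized $p$-obstruction for $(\pi,\phi)$, so that $\lambda(M^p_C) \ge 1$, and write $M^p_{C,n}$ for the $p$-obstruction matrix of the iterate $(\pi_n,\phi_n)$ on the same support~$C$. I will establish the entry-wise inequality
\[
  M^p_{C,n} \ge (M^p_C)^n.
\]
Since both sides have non-negative entries, monotonicity of the Perron eigenvalue (via the Collatz--Wielandt formula) then gives $\lambda(M^p_{C,n}) \ge \lambda((M^p_C)^n) = \lambda(M^p_C)^n \ge 1$, and applying the other direction of Proposition~\ref{prop:obstruction-e-val} to $(\pi_n,\phi_n)$ produces the desired $p$-obstruction for the iterate, supported in a sub-multi-curve of~$C$.

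The entry-wise inequality is combinatorial bookkeeping based on the pullback construction of $X_n$ from Figure~\ref{fig:pullback}. Unwinding that construction, each component of $\pi_n^* C_i$ is encoded by a chain $(D_1,\dots,D_n)$ of components of $X_1$ with $D_1 \in \pi^* C_i$ and $D_{k+1} \in \pi^* \phi_* D_k$ for $k<n$, and its covering degree over $C_i$ factors as $\prod_k \deg(D_k \to \phi_* D_{k-1})$, with the convention $\phi_* D_0 \coloneqq C_i$; this multiplicativity survives after raising to the $(1-p)$-power. Expanding $((M^p_C)^n)_{ij}$ as a sum over index sequences $i = i_0, i_1, \dots, i_n = j$ and over component choices $D_k \in \pi^* C_{i_{k-1}}$ with $\phi_* D_k \sim C_{i_k}$ reproduces exactly those chains for which every intermediate pushforward $\phi_* D_k$ is homotopic to a component of~$C$, while $(M^p_{C,n})_{ij}$ sums the same weights $\prod_k \deg(D_k)^{1-p}$ over all chains from $C_i$ to $C_j$ in the iterate with no constraint on intermediate pushforwards, hence is at least $((M^p_C)^n)_{ij}$.

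The main obstacle is precisely this identification of components of $\pi_n^* C_i$ with chains of lifts through $X_1$, together with the accounting that the total covering degree factors as the product of step degrees. Once that identification is in hand the rest of the argument is formal.
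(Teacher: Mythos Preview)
Your proof is correct and follows the same route as the paper: reduce to the matrix criterion of Proposition~\ref{prop:obstruction-e-val} and use that the eigenvalue condition passes to powers, then invoke Proposition~\ref{prop:obstruction-energy} for the asymptotic bound. In fact you are more careful than the paper's one-line proof, which simply asserts that a matrix has an eigenvalue of absolute value at least~$1$ iff its $n$-th power does (tacitly treating the iterate's obstruction matrix as exactly $(M^p_C)^n$), whereas you correctly establish and use the entry-wise inequality $M^p_{C,n} \ge (M^p_C)^n$ together with monotonicity of the Perron eigenvalue.
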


\begin{proof}
  For any $n \ge 1$, a matrix $M$ has an eigenvalue of absolute value
  greater than~$1$ iff $M^n$ does. Apply
  Propositions \ref{prop:obstruction-e-val} and~\ref{prop:obstruction-energy}.
\end{proof}

Let us investigate what the support of a $p$-obstruction
can look like.

\begin{definition}
  Let $\pi, \phi \co X_1 \rightrightarrows X_0$ be a virtual
  endomorphism and let $(C,c)$ be a multi-curve on~$X_0$. Then $C$ is
  \emph{forwards-invariant} if each component of~$C$ is homotopic to a
  component of $\phi_*\pi^* C$. (Here, $\phi_*$ is defined as in
  Definition~\ref{def:join-top}, but without any $\alpha$-lengths.)
  $C$ is \emph{irreducible} if, for any two
  components $C_i$ and~$C_j$ of~$C$, there is some~$n$ so that $C_j$
  appears in $(\phi_*\pi^*)^n C_i$. (An irreducible curve is
  necessarily forwards-invariant.)
  $C$ is \emph{back-invariant} if each
  component of $\phi_*\pi^* C$ is homotopic to a component of~$C$, and
  $C$ is \emph{totally invariant} if the components
  of~$C$ are in bijection with the components of $\phi_*\pi^* C$.
\end{definition}

The terminology comes from the context of branched self-covers
$f \co (S^2, P)\righttoleftarrow$. Let $C$ be a multi-curve on
$S^2\setminus P$.
\begin{itemize}
\item $C$ is back-invariant iff, up to homotopy in $S^2 \setminus P$,
  we have $C \subset f^{-1}(C)$.
\item $C$ is forwards-invariant iff $C$ is homotopic in
  $S^2 \setminus P$ to a multi-curve $C_1$ on $S^2 \setminus f^{-1}(P)$ with
  $f(C_1) \subset C$.%
  \footnote{Recall that the forward image of a multi-curve in
  $S^2\setminus P$ is not well-defined.}
\end{itemize}
If $A$ is a $p$-obstruction for $(\pi,\phi)$, then the underlying
multi-curve~$C$ of~$A$ must be forwards-invariant. (Otherwise, no
map~$\psi \co A \to \phi_*\pi^* A$ is possible.)
The matrix $M_{C,p}$ is irreducible in the Perron-Frobenius sense iff
$C$ is irreducible as a multi-curve.
On the other hand, in
the context of rational maps it is more traditional to look at
back-invariant multi-curves.

We can often switch between back-invariant and forward-invariant
multi-curves as follows. First, some graph-theory terminology. In a directed
graph, a \emph{strongly-connected component} (SCC) is a maximal set~$S$
of vertices so that every ordered pair of vertices in~$S$ can be
connected by a directed edge-path. Every directed graph is a disjoint
union of its SCCs. A \emph{strict} SCC is an SCC in which every pair
of vertices can be connected by a non-trivial directed edge-path. A
non-strict SCC is a single vertex with no self-loop.

Given a virtual endomorphism $\pi,\phi \co X_1 \rightrightarrows X_0$
and a multi-curve~$C$ on~$X_0$, form the directed graph $\Gamma(C)$
whose vertices are the components of~$C$, with an arrow from $C_i$
to~$C_j$ if $C_j$ appears as a component of $\phi_*\pi^* C_i$. A
strict SCC of $\Gamma(C)$ gives a forward-invariant multi-curve, although
not all forward-invariant multi-curves arise in this way.

\begin{proposition}
  Let $\pi,\phi \co X_1 \rightrightarrows X_0$ be a virtual
  endomorphism and let $C$ be a multi-curve on~$X_0$. Then there is an
  irreducible forwards-invariant
  sub-multi-curve $C_0 \subset C$ with
  $\lambda(M_{C_0,p}) = \lambda(M_{C,p})$.
\end{proposition}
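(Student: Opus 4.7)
The plan is to run the standard Perron--Frobenius decomposition on the directed graph $\Gamma(C)$ and take $C_0$ to be a strict strongly connected component of $\Gamma(C)$ whose diagonal block in $M^p_C$ realizes the spectral radius.

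First I would observe that the $(C_i, C_j)$ entry of $M^p_C$ is a sum of positive terms, so it is nonzero precisely when some $D \in \pi^* C_i$ has $\phi_* D \sim C_j$, i.e.\ precisely when $\Gamma(C)$ has an arrow from $C_i$ to $C_j$. Ordering the components of $C$ compatibly with a topological sort of the SCC decomposition of $\Gamma(C)$ puts $M^p_C$ into block upper-triangular form, with one diagonal block per SCC. By the standard fact that the spectrum of a block triangular matrix is the union of the spectra of its diagonal blocks, $\lambda(M^p_C)$ is the maximum of the spectral radii of those blocks. A non-strict SCC is a single vertex with no self-loop, contributing a $1 \times 1$ zero block of spectral radius $0$, while a strict SCC yields a nonnegative matrix whose underlying digraph is strongly connected, hence an irreducible nonnegative matrix with positive Perron--Frobenius eigenvalue.

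Let $C_0$ be a strict SCC realizing this maximum; if no strict SCC exists, then $M^p_C$ is nilpotent, $\lambda(M^p_C) = 0$, and we may take $C_0 = \emptyset$. Viewed as a sub-multi-curve of $C$, the SCC $C_0$ is irreducible because for each ordered pair $(C_i, C_j)$ of its components strong connectivity provides a directed path $C_i \to \cdots \to C_j$ in $\Gamma(C)$, which iterates to give $C_j$ as a component of $(\phi_*\pi^*)^n C_i$ for some $n$. It is forward-invariant because every vertex of a strict SCC admits an incoming edge from within the SCC, so each $C_i \in C_0$ appears as a component of $\phi_*\pi^* C_{i'}$ for some $C_{i'} \in C_0$, and therefore is homotopic to a component of $\phi_*\pi^* C_0$ (the $\Join_p$ operation only fuses parallel homotopy classes, so it preserves the presence of $C_i$). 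Finally, the diagonal block of $M^p_C$ indexed by the components of $C_0$ coincides entry by entry with the intrinsic matrix $M^p_{C_0}$, since for $C_i, C_j \in C_0$ both count the same components $D \in \pi^* C_i$ with $\phi_* D \sim C_j$, weighted by $\deg(D \to C_i)^{1-p}$. Hence $\lambda(M^p_{C_0}) = \lambda(M^p_C)$.

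The only subtleties are the forward-invariance check (where one must confirm that the join in $\Join_p$ cannot kill the homotopy class of $C_i$ when it is fused with parallel classes from other components of $\phi_*\pi^* C_0$) and the degenerate case when no strict SCC exists; neither is difficult. The core of the argument is just the standard SCC reduction for spectral radii of nonnegative matrices.
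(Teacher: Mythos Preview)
Your proposal is correct and follows essentially the same approach as the paper's proof: both reduce to the block-triangular structure of $M^p_C$ with respect to the SCC decomposition of $\Gamma(C)$, then take $C_0$ to be a strict SCC whose diagonal block realizes the spectral radius (or $C_0=\emptyset$ in the degenerate case). Your version is simply more explicit about the verifications the paper leaves implicit---that the diagonal block indexed by $C_0$ agrees entrywise with the intrinsic matrix $M^p_{C_0}$, and that a strict SCC is irreducible and hence forwards-invariant.
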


\begin{proof}
  $M_{C,p}$ is block triangular with respect to the partial
  order on the SCCs of $\Gamma(C)$, so its maximum eigenvalue will be
  equal to the
  Perron-Frobenius eigenvalue of a diagonal block corresponding to an
  SCC~$S$. If $S$ is a single vertex with no self-loop, then
  $\lambda(M_{C,p}) = 0$ and we can take $C_0$ to be empty. Otherwise,
  take $C_0$ to be the union of multi-curves in~$S$.
\end{proof}

\begin{proposition}\label{prop:simple-back}
  Let $\pi, \phi \co \Sigma_1 \rightrightarrows \Sigma_0$ be a surface
  virtual endomorphism (with $\phi$ a surface embedding), and let
  $C$ be a simple forward-invariant multi-curve on~$\Sigma_0$. Then
  there is a simple back-invariant multi-curve $C_\infty \supset C$.
\end{proposition}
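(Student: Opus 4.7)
The plan is to build $C_\infty$ by iterating the operation $\phi_*\pi^*$ starting from $C$ and taking the stable limit. Concretely, I would set $C_0 = C$ and define recursively $C_{n+1} := \phi_*\pi^* C_n$, always viewed up to homotopy. The goal is to show that this sequence is increasing (i.e., each $C_n$ is contained, up to homotopy, in $C_{n+1}$), and that it stabilizes after finitely many steps. The stable value will then be the required back-invariant multi-curve.

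The key technical step is a monotonicity lemma: if $D$ and $D'$ are simple multi-curves on $\Sigma_0$ with every component of $D$ homotopic to a component of $D'$, then the same containment holds after applying $\phi_*\pi^*$. The pullback $\pi^*$ is monotone since it is the preimage under a covering map, and push-forward by the embedding $\phi$ is monotone on supports. The operation $\Join$ only deletes inessential components and merges homotopic components, so the set of isotopy classes of components can only shrink by these identifications in a way that preserves the containment relation between $\phi_*\pi^* D$ and $\phi_*\pi^* D'$. Each $C_n$ is simple by induction: $\pi^*$ preserves simpleness (covering map of disjoint curves), $\phi$ is an embedding, and $\Join$ yields a multi-curve of pairwise non-parallel essential curves.

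Forward invariance of $C$ is exactly the statement $C_0 \subseteq C_1$. Combined with monotonicity, induction gives $C_n \subseteq C_{n+1}$ for all $n$. Since every simple multi-curve on a punctured sphere or torus has at most $\max(0, \# P - 3)$ components (per component of $\Sigma$), the number of isotopy classes of components in $C_n$ is bounded above, so the increasing sequence must stabilize at some $C_N = C_{N+1}$. Setting $C_\infty := C_N$, the equality $\phi_*\pi^* C_\infty = C_\infty$ says exactly that each component of $\phi_*\pi^* C_\infty$ is homotopic to a component of $C_\infty$, which is the definition of back-invariance; and $C_\infty \supseteq C_0 = C$ by construction.

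The main obstacle I anticipate is bookkeeping around the $\Join$ operation: I need to confirm that the notion of inclusion of simple multi-curves (up to homotopy, after joining parallels) is preserved under $\phi_*\pi^*$. Once that monotonicity is nailed down, the remainder is a finiteness argument on an ascending chain in a finite set of isotopy classes, which is automatic.
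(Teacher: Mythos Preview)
Your proposal is correct and follows essentially the same approach as the paper: define $C_i = \phi_*\pi^* C_{i-1}$, observe that the sequence is increasing in the isotopy classes of components, and use the bound on the number of components of a simple multi-curve on a finite-type surface to force stabilization. The paper's proof is terser and simply asserts the monotonicity and simplicity steps that you take care to justify.
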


\begin{proof}
  For $i \ge 1$, let $C_i = \phi_*\pi^* C_{i-1}$ by induction. Then
  each $C_i$ is a simple multi-curve and $C_{i-1} \subset C_i$. Since there
  is a bound on how many components a simple multi-curve on a surface of
  finite type can have, the $C_i$ eventually stabilize into a
  back-invariant multi-curve.
\end{proof}

Although back-invariant multi-curves are more traditional,
forwards-invariant multi-curves appear to be more generally useful.
In a non-surface setting, or if $p < 2$, obstructions need not be
simple and there is no obvious
analogue of Proposition~\ref{prop:simple-back}.

\subsection{Annular obstructions and asymptotic energy}
\label{sec:annular-asymptotic}

Let $\pi, \phi \co \Gamma_1 \rightrightarrows \Gamma_0$ be a
virtual endomorphism.
For any forwards-invariant multi-curve~$C$ on~$\Gamma_0$, there is unique
value of~$p$ so that $\lambda(M_{C,p}) = 1$, which we denote $Q(C)$
\cite[Lemma~A.2]{HP08:Ahlfors-dim}.
Define
$Q(\pi,\phi)$ to be the maximum of $Q(C)$ over all forwards-invariant
multi-curves~$C$.
\begin{proposition}\label{prop:Q-bound}
  Let $\pi,\phi \co \Gamma_1 \rightrightarrows \Gamma_0$ be a graph
  virtual endomorphism. Then if $\oE^p_p[\pi,\phi] \ge 1$, we have
  $Q(\pi,\phi) \le p$.
\end{proposition}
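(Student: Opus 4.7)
The plan is to argue by the equivalent contrapositive: I will show that $Q(\pi,\phi) > p$ implies $\oE^p_p[\pi,\phi] \ge 1$, so in particular the hypothesis $\oE^p_p \ge 1$ is compatible only with $Q(\pi,\phi)\le p$ (the strict-vs-weak boundary being handled by continuity). The whole argument reduces to a monotonicity statement about $\lambda(M^p_C)$ together with the two tools already in hand: Proposition~\ref{prop:obstruction-e-val} (an eigenvalue $\ge 1$ produces a $p$-obstruction) and Corollary~\ref{cor:obstruction-iterate} (a $p$-obstruction forces $\oE^p_p\ge 1$).

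The key lemma is that $p \mapsto \lambda(M^p_C)$ is continuous and non-increasing on $[1,\infty)$. Each entry $(M^p_C)_{ij} = \sum_D (\deg D)^{1-p}$ is a finite sum of terms $d^{1-p}$ with $d \ge 1$; each such term is continuous and weakly decreasing in $p$ (strictly when $d>1$), so the entries of $M^p_C$ are continuous and non-increasing in $p$. The spectral radius of a non-negative matrix is monotone in its entries, as one sees from the Collatz--Wielandt characterization
\[
\lambda(M) \;=\; \sup_{v \ge 0,\; v \ne 0}\; \min_{i:\, v_i > 0}\; \frac{(Mv)_i}{v_i},
\]
so $p\mapsto \lambda(M^p_C)$ inherits continuity and monotonicity. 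By the defining property $\lambda(M^{Q(C)}_C)=1$, this gives $\lambda(M^p_C)\ge 1$ whenever $p\le Q(C)$, with strict inequality when $p<Q(C)$ and $C$ supports any nontrivial branching.

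With the lemma in hand the argument is a one-line chain. Suppose $Q(\pi,\phi) > p$. Unpacking $Q(\pi,\phi)=\sup_C Q(C)$ gives a forwards-invariant multi-curve $C$ on $\Gamma_0$ with $Q(C)>p$. Monotonicity yields $\lambda(M^p_C) > 1$, so Proposition~\ref{prop:obstruction-e-val} produces a $p$-obstruction whose support is a sub-multi-curve of $C$. Finally Corollary~\ref{cor:obstruction-iterate} converts the $p$-obstruction into the asymptotic bound $\oE^p_p[\pi,\phi] \ge 1$, completing the contrapositive.

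The main technical point to be careful about is monotonicity of $\lambda$ when $M^p_C$ is reducible. In the irreducible case Perron--Frobenius plus continuous dependence of eigenvalues on the entries makes everything immediate. In the reducible case I would decompose $\Gamma(C)$ into its strongly connected components and observe that $\lambda(M^p_C)$ equals the maximum of the Perron eigenvalues of the irreducible diagonal blocks corresponding to the strict SCCs (the non-strict SCCs contribute $0$ to the spectral radius); since the maximum of finitely many continuous non-increasing functions is continuous and non-increasing, the lemma carries through without change.
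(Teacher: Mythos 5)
Your chain of lemmas is mathematically sound, but the framing contains a logical slip you should fix. What you prove is ``$Q(\pi,\phi) > p$ implies $\oE^p_p[\pi,\phi] \ge 1$''. That is \emph{not} the contrapositive of the statement as printed: the contrapositive of ``$\oE^p_p \ge 1 \Rightarrow Q(\pi,\phi) \le p$'' is ``$Q(\pi,\phi) > p \Rightarrow \oE^p_p < 1$'', and your closing sentence (``the hypothesis $\oE^p_p \ge 1$ is compatible only with $Q \le p$'') does not follow from your implication --- indeed your implication makes $\oE^p_p\ge 1$ and $Q>p$ perfectly compatible. What you have actually proved is the contrapositive of ``$\oE^p_p[\pi,\phi] < 1 \Rightarrow Q(\pi,\phi) \le p$''. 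That, however, is evidently the statement the paper intends: it is exactly what the cited ingredients (Proposition~\ref{prop:obstruction-e-val}, Corollary~\ref{cor:obstruction-iterate}, Proposition~\ref{prop:Epp-decrease}) yield, and it is the reading under which the surrounding remark --- that the proposition is vacuous when $\oE^p_p \ge 1$ for every $p$ --- makes sense. The printed hypothesis ``$\ge 1$'' appears to be a sign typo, and as literally written the proposition is false (any $(\pi,\phi)$ carrying a forwards-invariant curve with $Q(C) > p$ has both $\oE^p_p \ge 1$ and $Q > p$). So: state explicitly that you are proving the corrected implication, rather than presenting it as equivalent to the printed one.

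Granting that, your route differs from the paper's only in one step, and both work. The paper applies Proposition~\ref{prop:obstruction-e-val} at $q = Q(C)$ itself, where $\lambda(M^q_C) = 1$, to get a $q$-obstruction, hence $\oE^q_q \ge 1$ by Corollary~\ref{cor:obstruction-iterate}, and then descends from $q$ to $p < q$ using monotonicity of the \emph{asymptotic energy} in $p$ (Proposition~\ref{prop:Epp-decrease}). You instead use monotonicity of the \emph{spectral radius} $p \mapsto \lambda(M^p_C)$ (entries $d^{1-p}$, $d \ge 1$, plus Collatz--Wielandt) to get $\lambda(M^p_C) \ge 1$ directly at $p$, producing a $p$-obstruction and bypassing Proposition~\ref{prop:Epp-decrease}. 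Your extra lemma is correct, including the reducible case, and is no harder than the paper's step; note only that you do not need strict inequality $\lambda(M^p_C) > 1$ (weak monotonicity from $\lambda(M^{Q(C)}_C)=1$ suffices for Proposition~\ref{prop:obstruction-e-val}), nor any continuity argument at the boundary case, since the contrapositive already delivers the non-strict conclusion $Q(\pi,\phi) \le p$.
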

\begin{proof}
  Immediate from Corollary~\ref{cor:obstruction-iterate} and
  Propositions~\ref{prop:obstruction-e-val}
  and~\ref{prop:Epp-decrease}.
\end{proof}
Compare Proposition~\ref{prop:Q-bound} to the following result of
Haïssinsky and Pilgrim.
\begin{citethm}[Haïssinsky--Pilgrim \cite{HP08:Ahlfors-dim}]
  \label{thm:ahlfors-dim}
  Suppose $f \co S^2 \to S^2$ is topologically cxc. Then
  $Q(f) \le \mathop{\mathrm{confdim}}_{AR}(f)$.
\end{citethm}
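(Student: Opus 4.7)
The plan is to prove the contrapositive: if $p > \mathop{\mathrm{confdim}}_{AR}(f)$, then $Q(f) \le p$. Fix such a $p$ and choose an Ahlfors $Q'$-regular metric $d$ on $S^2$ in the quasisymmetric gauge of $f$ with $Q' < p$. The strategy is to interpret $\lambda(M^p_C)$ as governing the growth of a combinatorial $p$-modulus under backwards iteration, and then to trap that growth using the geometric $p$-modulus computed in~$d$.

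First, for any forwards-invariant multi-curve $C$ on $S^2\setminus P$, I would associate to each component $C_i$ an annular neighborhood $A_i$ of some modulus $m_i$, and consider the pull-back multi-curve $(\phi_*\pi^*)^n C$ together with the pulled-back annuli. Sections~\ref{sec:duality} and~\ref{sec:obstruction-matrices} give the combinatorial book-keeping: passing to a degree-$d$ cover multiplies the $p$-conductance by $d^{1-p}$, and the $\Join_p$ operation adds $p$-conductances. These are precisely the entries of $M^p_C$, so after $n$ iterations the total $p$-conductance of the family dual to $C$ grows like $\lambda(M^p_C)^n$, up to bounded distortion. By Proposition~\ref{prop:join-parallel} this is faithfully reflected in the analytic $p$-modulus of the corresponding annular family (combinatorial and analytic modulus agree in this parallel/series setting).

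The second step is to bound this modulus from above using the Ahlfors-regular metric. The topologically cxc hypothesis provides a Markov partition with bounded geometry, so that annular neighborhoods of components of $f^{-n}(C)$ sit inside tiles of generation $n$ whose $d$-diameters shrink exponentially with controlled distortion (roundness). In any $Q'$-regular space, the $p$-modulus of a curve family joining the two boundary components of a ``round'' annulus of diameter $\delta$ is bounded above by a constant (depending on $p-Q' > 0$) that does not blow up as $\delta \to 0$. Summing these contributions across the $n$-th generation tiles and using that $p > Q'$ keeps the $p$-modulus of the pulled-back family bounded uniformly in $n$. Combining with the lower bound $\lambda(M^p_C)^n \cdot (\text{const})$ from the previous paragraph forces $\lambda(M^p_C) \le 1$, hence $Q(C) \le p$. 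Taking the supremum over forwards-invariant $C$ and then over $p > \mathop{\mathrm{confdim}}_{AR}(f)$ gives the result.

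The main obstacle will be the geometric comparison step: rigorously controlling the analytic $p$-modulus of the pulled-back annular families in the metric $d$, uniformly in the generation $n$. This requires the full strength of the cxc axioms (roundness, bounded degree, and the Markov structure), together with a combinatorial-versus-analytic modulus comparison in the style of Heinonen--Koskela and Tyson, to ensure that the modulus does not blow up or collapse as one iterates. This is precisely where the Ahlfors-regularity of the metric is used, as opposed to merely the Hausdorff dimension, since $p$-modulus in a non-regular metric space can fail the needed capacity estimates.
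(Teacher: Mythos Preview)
The paper does not prove this statement. Theorem~\ref{thm:ahlfors-dim} is stated as a \texttt{citethm}, attributed to Ha\"issinsky and Pilgrim~\cite{HP08:Ahlfors-dim}, and is followed only by a paragraph explaining the terminology (topologically cxc, Ahlfors regular conformal dimension) and contrasting the result with Proposition~\ref{prop:Q-bound}. There is no proof in the paper to compare your proposal against.

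Your sketch is a plausible outline of how the original argument goes, and the first step---interpreting $\lambda(M^p_C)^n$ as the growth of a combinatorial $p$-modulus under iteration---is consistent with the duality discussion in Section~\ref{sec:duality}. But since the present paper simply quotes the result, any assessment of your argument would have to be against the Ha\"issinsky--Pilgrim paper itself, not this one.
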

Here, $Q(f)$ is the version of $Q(\pi,\phi)$ for branched self-covers.
\emph{Topologically cxc} is a
topological notion of expanding
branched self-covers, and in particular implies that
there are no cycles with branch points in~$P$ (the opposite
of the hyperbolic case).
The \emph{Ahlfors
  regular conformal dimension} $\mathop{\mathrm{confdim}}_{AR}(f)$ is
an analytically defined quantity, the minimal Hausdorff dimension of
any Ahlfors regular metric in a certain quasi-symmetry class of
expanding metrics canonically associated to~$f$.

Theorem~\ref{thm:ahlfors-dim}, like Proposition~\ref{prop:Q-bound},
gives upper bounds on
$Q(f)$. However, it gives bounds in terms of the purely analytic
Ahlfors regular conformal dimension, rather than the asymptotic
energy. In addition,
Theorem~\ref{thm:ahlfors-dim}
applies to maps with \emph{no} branched cycles in~$P$, while
Proposition~\ref{prop:Q-bound} is
vacuous unless \emph{every}
cycle in~$P$ is branched. (If there is an unbranched cycle in~$P$,
then $\oE^p_p[\phi] \ge 1$ for every $p \in [1,\infty]$.)

\begin{question}\label{quest:obstruct}
  Suppose $f \co (\Sigma,P) \righttoleftarrow$ is a branched self-cover
  of hyperbolic type, with compatible virtual endomorphism
  $(\pi,\phi)$.  Is it true that $\oE^p_p[\pi,\phi] < 1$
  iff there is no $p$-obstruction?
\end{question}

Theorem~\ref{thm:thurston-obstruction}
and Theorem~\ref{thm:detect-rational} combine to say that the answer to
Question~\ref{quest:obstruct} is positive for $p=2$. However, the
proof is quite roundabout, needing the
full strength of both theorems. One could hope for a more direct proof
of equivalence of the two criteria and a generalization to other
values of~$p$. (This might also give another proof of
Theorem~\ref{thm:thurston-obstruction}.)


\bibliographystyle{hamsalpha}
\bibliography{dylan,conformal,geom,topo,misc,curves}

\end{document}